\newcommand{\comment}[1]{}
\newtheorem{theorem}{Theorem}
\newtheorem{definition}{Definition}
\newtheorem{proposition}{Proposition}
\newtheorem{lemma}{Lemma}
\newtheorem{corollary}{Corollary}
\newtheorem{example}{Example}
\newtheorem{remark}{Remark}
\newtheorem{conjecture}{Conjecture}
\newcommand{\ft}[1]{\widehat{#1}}
\newcommand{\con}{\mbox{\rm con\,}}
\newcommand\supp{\qopname\relax o{supp}}
\newcommand\cone{\operatorname{\rm Cone}}
\newcommand{\ffi}{\varphi}
\newcommand\s{\sigma}
\newcommand\si{\sigma}
\newcommand\ve{\varepsilon}
\newcommand\la{\lambda}
\newcommand\de{\delta}
\newcommand\al{\alpha}
\newcommand\om{\omega}
\newcommand\Om{\Omega}
\newcommand{\NN}{\mathbb N}
\newcommand{\RR}{\mathbb R}
\newcommand{\CC}{\mathbb C}
\newcommand{\TT}{\mathbb T}
\newcommand\D{\mathcal{D}}
\newcommand\DD{\mathcal{D}_0}
\newcommand\DC{\mathcal{D}_c}
\newcommand\DDp{\mathcal{D}^{+}}
\newcommand\DL{\mathcal{D}^{\star}}
\newcommand\DN{\mathcal{D}^{\#}}
\newcommand\B{\mathcal{B}}
\newcommand{\FFF}{\mathbb F}
\newcommand\PP{\mathcal{P}}
\newcommand\PPP{\mathcal{P}_0}
\newcommand\BB{\mathcal{B}_0}
\newcommand\CCC{\mathcal C}   
\newcommand\MEAN{{\mathbb M}}
\newcommand\MEANG{{\mathbb M}_G}
\newcommand\MEANGH{{\mathbb M}_{\widehat{G}}}
\newcommand\MM{\mathcal{M}}
\newcommand\NNN{\mathcal{N}}
\newcommand\OOO{\mathcal{O}}
\newcommand\NS{\widetilde{\mathcal{N}}}
\newcommand\sat{\s_{\rm at}}
\newcommand\tat{\tau_{\rm at}}
\newcommand\oat{\omega_{\rm at}}
\newcommand{\ler}[1]{\left( #1 \right)}
\newcommand{\ol}[1]{\overline{#1}}
\begin{document}
\title[Integral comparisons on groups]{Integral comparisons of nonnegative positive definite functions on LCA groups}
\author{}
\author{Marcell Ga\'al and Szil\'ard Gy. R\'ev\'esz}

\address{Marcell Ga\'al
\newline  \indent R\'enyi Institute of Mathematics \newline \indent Hungarian Academy of Sciences,
\newline \indent Budapest, Re\'altanoda utca 13-15,  1053 HUNGARY}
\address{and
\newline \indent Bolyai Institute, Interdisciplinary Excellence Centre \newline \indent University of Szeged \newline  \indent  Szeged, Aradi v\'ertan\'uk tere 1, 6720 HUNGARY}
\email{gaal.marcell@renyi.mta.hu}

\address{Szil\'ard Gy. R\'ev\'esz
\newline  \indent R\'enyi Institute of Mathematics \newline \indent Hungarian Academy of Sciences,
\newline \indent Budapest, Re\'altanoda utca 13-15,  1053 HUNGARY} \email{revesz.szilard@renyi.mta.hu}

\date{\today}

\begin{abstract} In this paper we investigate the following questions. Let $\mu, \nu$ be two regular Borel measures of finite total variation. When do we have a constant $C$ satisfying $$\int f d\nu \le C \int f d\mu$$ whenever $f$ is a continuous nonnegative positive definite function?
How the admissible constants $C$ can be characterized, and what is their optimal value?
We first discuss the problem in locally compact abelian groups.
Then we make further specializations when the Borel measures $\mu, \nu$ are both either purely atomic or absolutely continuous with respect to a reference Haar measure. In addition, we prove a duality conjecture posed in our former paper.
\end{abstract}

\maketitle

{\small \tableofcontents}

\bigskip
\bigskip

{\bf MSC 2010 Subject Classification.} {Primary 43A05, 43A35. Secondary 43A25, 43A60, 43A70.}

{\bf Keywords and phrases.} {\it LCA groups, Fourier transform, extremal problems, positive definite functions, dual cones.}

\newpage
	
\section{Introduction}

In 1988 Logan \cite{Log}, motivated by Montgomery's earlier question, investigated the following extremal problem. For any $T>0$, find the supremum $C(T)$ of the ratio
\[
\frac{\int_{-T}^{T} b(t) dt}{\int_{-1}^{1} a(t) dt}
\]
over the set of all Dirichl\'et polynomials of the form
\[
0 \leq a(t)=\sum_{k=-n}^n a_k e^{\mathrm{i} \lambda_k t}, ~ a_k > 0; \qquad b(t)=\sum_{k=-n}^n b_k e^{\mathrm{i} \lambda_k t}; \qquad |b_k| \leq a_k
\]
where the $\lambda_k$'s ($k=1,\ldots,n$) are real numbers, and $n$ could be arbitrarily large.

By his \emph{mass method}, Logan derived upper bounds for $C(T)$. Furthermore also some lower estimates and continuity properties of the extremal constant (as a function of $T$) have been established.
By taking $b(t)$ to be a translate of $a(t)$, the extremal problem is useful in the estimation of boundary behaviour of
complex analytic functions. If $f$ is an analytic function of the form
\[
f(z)=\sum_{n=0}^{\infty} a_n z^n, \quad a_n \geq 0
\]
whose boundary values belong to $L^2$ on a small arc $$\{ e^{\mathrm{i}t} ~:~ t\in (-\varepsilon,\varepsilon)\}$$ centered at $z=1$, then
according to an observation of Wiener (quoted by Boas \cite{boas})
the $L^2$-norm on the whole boundary is bounded by a constant time the same on the given small arc, which in particular implies that under these assumptions the function $f$ belongs to $L^2$ on the whole unit circle.
Later Erd\H{o}s and Fuchs \cite{EF} proved the explicit inequality
\[
\frac{1}{2\pi} \int_{-\pi}^{\pi} |f(e^{\mathrm{i}t})|^2 dt \leq \frac{3}{2\varepsilon} \int_{-\varepsilon}^{\varepsilon} |f(e^{\mathrm{i}t})|^2 dt, \quad 0 < \varepsilon \leq \pi
\]
improving on the implied constant in Wiener's estimation. For more results on the celebrated Wiener problem the interested reader can consult with the series of publications \cite{BR, GT, Wa, WW}.
Further we mention that Shapiro \cite{Sh} pointed out that to obtain such inequalities for the absolute value squares of positive definite functions, inequalities for nonnegative positive definite functions themselves (without absolute value squares) can be used.

In what follows let us take $f(t):=a(t)=b(t)$. It is a rather trivial observation that taking the measures
\[
d\nu = \chi_{[-T,T]}dx, \qquad d\mu = \chi_{[-1,1]}dx
\]
the above problem can be formulated as follows.
Under what conditions do we have a constant $C$ satisfying
\begin{equation}\label{eq:Halasz}
\int f d\nu \le C \int f d\mu
\end{equation}
whenever $f$ is a nonnegative Dirichl\'et polynomial of positive coefficients?
Note that the occurring functions are positive definite ones, and are also nonnegative.
This motivates the following definition \cite{BAMS}.

\begin{definition}
A function which is both positive definite and nonnegative is called doubly positive.
\end{definition}

So the natural setup is to consider doubly positive functions in this sort of extremal problems.
Therefore, in the present paper we discuss extremal problems in the general setting of LCA (locally compact abelian) groups, taking arbitrary regular Borel measures $\mu, \nu$ of finite total variation and doubly positive functions $f$ vanishing at infinity in place of $f$ in \eqref{eq:Halasz}. The question of finding conditions for \eqref{eq:Halasz} to hold was posed by Hal\'asz in oral communication to us. To the best of our knowledge such comparisons of weighted averages with respect to different weights were first worked out and employed in the seminal number theory paper \cite{Halasz} by Hal\'asz.

Although our results are new in the setting of positive definite functions on the real line as well, we have decided to formulate them for general LCA groups. In this way we could explore those structural properties of LCA groups which indeed play inevitable roles in obtaining the corresponding results, and are somewhat hidden in the particular case of real numbers (or $\mathbb{R}^d$) on which very special algebraic and metric structures are at hand.
We further hope that the choice of the abstract setting may inspire some readers to extend the results say e.g. to  certain noncommutative locally compact groups. In addition, we believe that the techniques developed in the current paper will be useful to address other extremal problems (such as e.g. Delsarte's extremal problem) and their dual descriptions as well.

The first main result of the paper is the following answer to the question of Hal\'asz.

\begin{theorem}\label{th:Halasz} Let $\mu, \nu$ be two (bounded, regular, Borel) real measures on the LCA group $G$. Then \eqref{eq:Halasz} holds for all doubly positive continuous functions $f$ if and only if the measure $C\mu-\nu$ can be decomposed as
\begin{equation*} \label{E:decomp}
C\mu-\nu = \sigma + \tau + o,
\end{equation*}
where $\sigma$ is a nonnegative real measure, $\tau$ is a real measure of positive type and $o$ is an odd measure.
\end{theorem}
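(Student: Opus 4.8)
The plan is to characterize when $\int f\,d\nu \le C\int f\,d\mu$ holds for all doubly positive continuous $f$ by a Hahn--Banach/duality argument on a suitable cone, after first normalizing the problem to the measure $\lambda := C\mu - \nu$. The statement then becomes: $\int f\,d\lambda \ge 0$ for every doubly positive continuous $f$ \emph{if and only if} $\lambda$ admits a decomposition $\lambda = \sigma + \tau + o$ with $\sigma$ nonnegative, $\tau$ of positive type, and $o$ odd. The ``if'' direction is the easy half: if such a decomposition exists, then for any doubly positive continuous $f$ we get $\int f\,d\sigma \ge 0$ since $f \ge 0$ and $\sigma \ge 0$; $\int f\,d\tau \ge 0$ because $f$ positive definite pairs nonnegatively with measures of positive type (this is essentially Bochner's theorem on the Fourier side, or the definition of positive type tested against positive definite functions); and $\int f\,d o = 0$ because $f$ is automatically even (a continuous positive definite function satisfies $f(-x) = \overline{f(x)}$, and nonnegativity forces $f$ real, hence $f(-x)=f(x)$) while $o$ is odd, so the pairing vanishes. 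Summing gives $\int f\,d\lambda \ge 0$.

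The substance is the ``only if'' direction, which I would attack by a separation argument. Consider the real vector space $\mathcal M(G,\RR)$ of bounded real regular Borel measures with the weak-$*$ topology induced by $\mathcal C_0(G,\RR)$ (or rather work in the dual pairing $\langle \mathcal C_0(G), \mathcal M(G)\rangle$), and let $K$ be the set of measures expressible as $\sigma + \tau + o$ as above. One checks that $K$ is a convex cone. The goal is to show $K$ is \emph{closed} (in an appropriate topology) and that its ``dual cone'' — the set of continuous functionals nonnegative on $K$ — is exactly the cone generated by doubly positive continuous functions; then bipolar duality forces $\lambda \in K$ whenever $\lambda$ pairs nonnegatively with all doubly positive $f$. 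Concretely: the annihilator-type computation should show that a continuous function $f$ satisfies $\int f\,d\eta \ge 0$ for all $\eta \in K$ exactly when $\int f\,d\sigma \ge 0$ for all $\sigma \ge 0$ (forcing $f \ge 0$), $\int f\,d\tau \ge 0$ for all $\tau$ of positive type (forcing $f$ positive definite, via the characterization of the dual cone of positive-type measures as positive definite functions), and $\int f\,d o = 0$ for all odd $o$ (forcing $f$ even, which is automatic for doubly positive $f$). So the polar of the doubly-positive cone is $\overline{K}$, and the theorem reduces to the closedness of $K$.

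The main obstacle, as I see it, is precisely this closedness of the cone $K = \{\sigma + \tau + o\}$ in the relevant (weak-$*$) topology: sums of closed cones need not be closed, and here we are adding three pieces, two of which (nonnegative measures and positive-type measures) are themselves genuinely infinite-dimensional closed cones with nontrivial interaction, while the odd measures form a closed subspace. I expect one needs a compactness or coercivity input — perhaps restricting first to measures supported on a fixed compact set and using Prokhorov-type tightness, or exploiting that on an LCA group the Fourier transform turns ``positive type'' into ``nonnegative on $\widehat G$'' so that the whole decomposition problem can be transported to a statement about (possibly unbounded) functions or measures on the dual group, where the nonnegativity cone and the even/odd splitting are more transparent. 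A careful handling of the boundary-at-infinity (the role of $f$ vanishing at infinity versus merely bounded continuous, and whether $C_0$ or $C_b$ is the right predual) will also be needed to make the Hahn--Banach separation clean. Once $\overline K = K$ is established, the rest is a routine application of the bipolar theorem together with the identification of the relevant dual cones.
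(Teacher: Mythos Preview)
Your overall architecture matches the paper's: reduce to showing $C\mu-\nu$ lies in the dual cone $(\PPP\cap\DD)^+$, then identify this dual cone as $M_+(G)+\MM+\OOO$ via a bipolar/separation argument, with the crux being that the sum of cones is already closed (so no weak-$*$ closure is needed). You have also correctly isolated the real difficulty.

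Where your proposal falls short is in the route to that closedness. Your suggested attacks---Prokhorov-type tightness after restricting supports, or transporting everything to $\widehat G$ via the Fourier transform---do not lead anywhere obvious; the interaction between $M_+(G)$ and $\MM$ is no more transparent on the dual side, and there is no natural compactness to exploit. The paper sidesteps a direct closedness proof entirely. It invokes an intersection formula (Lemma~\ref{l:Jeya-Wolk}, due to Jeyakumar--Wolkowicz): for closed convex sets $A,B$ containing $0$ in a real Banach space, one has $(A\cap B)^+=A^++B^+$ \emph{without closure} provided $\cone(B-A)$ is a closed subspace. Applying this with $A=\PPP$, $B=\DD$ in $\CCC_0(G,\RR)$, the hypothesis becomes the concrete claim that $\DD-\PPP=\CCC_0(G,\RR)$, i.e.\ every $f\in\CCC_0(G,\RR)$ is dominated by some $F\in\DD$. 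That is Lemma~\ref{l:D-P}, proved by a dyadic construction using compactly supported doubly positive approximants of ${\bf 1}$ (Lemma~\ref{l:Kg}). Once the intersection formula applies, $\PPP^+=M_+(G)$ is standard and $\DD^+=\MM+\OOO$ is checked directly (the odd part appears because real-valued positive definite functions are automatically even, exactly as you noted). So the missing idea is not an analytic closedness argument at all, but a constructive domination lemma on the predual side that feeds into the right version of the intersection formula.
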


This result is somewhat abstract and it is far from easy to check the characterization provided by it. A moment's thought may convince the reader that the condition on the decomposability of the measure $C\mu-\nu$ is a \emph{sufficient} one for inequality \eqref{eq:Halasz} to hold. Yet in the following we hope to convince the reader that the result is neither trivial nor useless.

In \S \ref{sec:atomic} and \S \ref{sec:ac} we intend to make further specializations regarding the occurring measures in Theorem \ref{th:Halasz} in the two most immediate cases. Namely,
using some theory of almost periodic functions, in \S \ref{sec:atomic} we show that in case of atomic measures $\mu,\nu$ in the above decomposition of $C\mu-\nu$ the components can be chosen to be atomic, too. This section owes much to the nice papers of Eberlein \cite{Eberlein1, Eberlein2}.
Next making crucial use of a variant of the Gelfand-Raikov theorem, in \S \ref{sec:ac} we present an absolutely continuous counterpart of the main result of \S \ref{sec:atomic}. This in particular will prove a duality theorem which was conjectured at the end of our former paper \cite{BAMS}.

\medskip

\emph{Acknowledgements.}
The current research was inspired by a comment of G\'abor Hal\'asz on our recent paper \cite{BAMS}.
We also thank him for calling our attention to the references \cite{Halasz, Log, ML}.

This research was partially supported by the DAAD-Tempus PPP Grant 57448965 ,,Harmonic Analysis and Extremal Problems''.

The work of Ga\'al was supported by the National Research, Development and Innovation Office -- NKFIH Reg. No.'s K-115383 and K-128972, and by the Ministry of Human Capacities, Hungary through grant 20391-3/2018/FEKUSTRAT.

The work of R\'ev\'esz was supported by the Hungarian National Research, Development and Innovation Office --  NKFIH Reg. No.'s K-119528 and K-109789.

\section{Harmonic analysis on LCA groups}\label{sec:harmanal}

Let us start with some overview of harmonic analysis on LCA groups.

Let $\mathbb{F}$ be either $\mathbb{C}$ or $\mathbb{R}$.
For a locally compact (Hausdorff) topological space $X$, let us denote by $\mathcal{C}(X,\mathbb{F})$ the set of all continuous $\mathbb{F}$-valued functions on $X$. Furthermore, denote by $\mathcal{C}_c(X,\mathbb{F})$, $\mathcal{C}_0(X,\mathbb{F})$ and $\mathcal{C}_b(X,\mathbb{F})$ the subset of those functions of $\mathcal{C}(X,\mathbb{F})$ which have compact support, vanish at infinity and are bounded, respectively. The inclusions
\begin{equation*}
    \mathcal{C}_c(X,\mathbb{F})\subseteq \mathcal{C}_0(X,\mathbb{F}) \subseteq \mathcal{C}_b(X,\mathbb{F}) \subseteq \mathcal{C}(X,\mathbb{F})
\end{equation*}
are obvious.
If $\mathcal{C}_b(X,\mathbb{F})$ is equipped with the supremum norm $\|.\|_{\infty}$ it forms a normed space $(\mathcal{C}_b(X,\mathbb{F}),\|.\|_{\infty})$. It is well-known that the closure of $\mathcal{C}_c(X,\mathbb{F})$ is $\mathcal{C}_0(X,\mathbb{F})$ in this supremum norm, while the latter is a closed subspace in $\mathcal{C}_b(X,\mathbb{F})$ (and a proper one exactly when $X$ is not compact). Further, $\mathcal{C}(X,\mathbb{F})$ might be equipped with the locally uniform convergence topology $\mathcal{U}_{\rm loc}$. This makes $\mathcal{C}(X,\mathbb{F})$ a topological vector space $(\mathcal{C}(X,\mathbb{F}),\mathcal{U}_{\rm loc})$. The closure of $\mathcal{C}_c(X,\mathbb{F})$ with respect to $\mathcal{U}_{\rm loc}$ is already $\mathcal{C}(X,\mathbb{F})$ -- whence the same holds for the intermediate spaces, too.

\medskip

\textbf{Radon measures.}
Radon measure is one of the most fundamental concepts in abstract harmonic analysis. In fact, there are plenty of different terminologies in the literature. In this paper we adopt the following one.
\begin{itemize}
    \item [a)] A linear functional $T:\mathcal{C}_c(X,\CC)\to \CC$ is termed to be a \textit{(complex) Radon measure} if for any $K\Subset X$ compact set (the symbol $\Subset$ stands for compact inclusion throughout the paper) there exists an $L>0$ such that
$\left|T(f)\right|\leq L\|f\|$ holds whenever $\supp f \subseteq K$.
    \item [b)] A Radon measure $T$ is said to be a \textit{real Radon measure} if for any $f\in \mathcal{C}_c(X,\mathbb{R})$ we have $T(f)\in \mathbb{R}$.
    \item [c)] A Radon measure is said to be a \textit{positive Radon measure} if for any continuous compactly supported function $f\geq 0$ we have $T(f)\geq 0$.
\end{itemize}

    Note that Radon measures in general are assumed to be continuous (or bounded) functionals on neither $(\mathcal{C}_c(X,\mathbb{F}),\|.\|_{\infty})$ nor  $(\mathcal{C}_c(X,\mathbb{F}),\mathcal{U}_{\rm loc})$.
    The family of all $\mathbb{F}$-valued Radon measures will be denoted by $R(X,\FFF)$.
\begin{itemize}
    \item [d)] An element of the dual space $(\mathcal{C}_c(X,\mathbb{F}), \|.\|_{\infty})'=(\mathcal{C}_0(X,\mathbb{F}), \|.\|_{\infty})'$ is called a \textit{bounded Radon measure}.
\end{itemize}

According to the \emph{Riesz representation theorem} (see, for instance, \cite[Appendix E4]{rudin:groups}), there is a one-to-one correspondence between the above abstract notion of Radon measures and the standard measure theoretic one. Namely, the elements of $(\mathcal{C}_c(X,\mathbb{F}), \|.\|_{\infty})'$ are exactly of the form
\begin{equation} \label{ident}
f \mapsto \int_{X} f d\mu \qquad \ler{f \in \mathcal{C}_c(X,\mathbb{F})}
\end{equation}
with some regular complex Borel measure $\mu$ of finite total variation.
Just as in \eqref{ident}, the dual space $(\mathcal{C}_c(X,\mathbb{F}),\mathcal{U}_{\rm loc})'$ can be identified with the set of compactly supported Radon measures \cite[4.10.1 Theorem]{Edwards}.
We introduce the following notations.
\begin{itemize}
    \item[a)] $M(X,\mathbb{F})$: the set of bounded Radon measures, or, equivalently, $\FFF$-valued Borel measures of finite total variation;
    \item[b)] $M_+(X)$: the set of positive bounded Radon measures;
    \item[c)] $M_c(X,\mathbb{F})$: the set of bounded Radon measures with compact support or, equivalently, $\FFF$-valued Borel measures of compact support.
\end{itemize}

Note that the class of general (possibly unbounded) Radon measures can also be characterized in a measure-theoretic way. Let the symbol $\B$ stand for the Borel sigma-algebra with $\BB$ being its subset of Borel sets with compact closure. Then $R(X,\FFF)$ is the family of set functions on $\BB$, the restriction of which to any compact set is a regular Borel measure.


In what follows let $G$ be an LCA group. For a Borel set $E\subseteq G$, the symbol $\chi_E$ denotes its characteristic (indicator) function.
A Radon measure $\mu$ is said to be translation invariant whenever $\mu(f(x+g))=\mu(f(x))$ holds for any $f\in \CCC_c(G,\mathbb{C})$ and $g\in G$. This condition is equivalent to assuming that the representing measure satisfies $\mu(E+g)=\mu(E)$ for any Borel set $E\in \BB$ and $g\in G$. A nonzero translation invariant Radon measure is called a \textit{Haar measure}. Haar measure exists uniquely up
to a harmless normalization in any LCA group.
A conveniently normalized Haar measure will be denoted by $\lambda$.
As a direct consequence of uniqueness, we also have
$\lambda(E)=\lambda(-E)$ for all Borel measurable set $E$.

\medskip

\textbf{Characters.}
A continuous map $\gamma$ from $G$ into the complex unit circle $\mathbb{T}$
satisfying
\[
\gamma(x+y)=\gamma(x)\gamma(y)\qquad(x,y\in G)
\]
is called a \emph{character} of $G.$ The set of all characters forms a group (with pointwise multiplication) which is called the \emph{dual group} of $G$ and is denoted by $\widehat G$. The dual group $\ft{G}$, equipped with the locally uniform convergence topology, is also an LCA group. Moreover, the Pontryagin-van Kampen duality theorem \cite[(24.8)]{HewittRossII} tells us that the mapping from $G$ to $\ft{\ft{G}}$ defined as
\[
g\mapsto \bigg(\gamma\mapsto \gamma(g), \quad (\forall \gamma \in \ft{G}) \bigg)
\]
provides a homeomorphic group isomorphism between $G$ and $\ft{\ft{G}}$. Hence the characters of $ \widehat{G}$ are exactly the so-called "point value evaluation functionals" at any fixed $g\in G$, by what we mean the functions $\gamma \mapsto \gamma(g)$ ($\gamma \in \ft{G}$).

\medskip

\textbf{Fourier transform.}
For any $\mu \in M(G,\mathbb{C})$, the {\it Fourier-Stieltjes transform} of $\mu$ is denoted by the symbol $\widehat \mu$, that is, we have
$$
\widehat \mu (\gamma)= \int_G \ol \gamma d\mu \quad (\gamma \in \widehat G).
$$
The Fourier-Stieltjes transform of $\ft{\mu}$ is bounded by $\|\mu\|$ and is uniformly continuous on $\ft{G}$ \cite[1.1.3 Theorem]{rudin:groups}.
The \emph{inverse Fourier transform} of a measure $\nu \in M(\ft{G},\mathbb{C})$ is defined by
\begin{equation} \label{invF}
\check{\nu}(x)=\int_{\widehat G} \gamma(x) d\nu(\gamma)\qquad (x \in G).
\end{equation}
The following formula is a version of the so-called \emph{Plancherel theorem} and in fact it is an easy consequence of Fubini's theorem:
if $\nu \in M(G,\mathbb{C})$ and $\sigma \in M(\widehat{G},\mathbb{C})$, then
\begin{equation} \label{eq:parseval}
\nu\left(\overline{{\check{\sigma}}}\right):=\int_G \overline{{\check{\sigma}}} d \nu= \int_G \overline{\int_{\widehat{G}} \gamma(x) d\sigma(\gamma) }d\nu(x) = \int_{\widehat G} \widehat{\nu}  \overline{d\sigma}=: \overline{\sigma}(\widehat{\nu}).
\end{equation}

\medskip

\textbf{Convolution.}
Convolution of two functions $f$ and $g$, or more generally, a function $f$ and a Radon measure $\nu$ could be defined on any LCA group as
\begin{equation} \label{convfunc}
(f\star g)(x) := \int_G f(x-y) g(y)d\lambda(y), \qquad (f\star \nu)(x) := \int_G f(x-y) d\nu(y)
\end{equation}
accordingly, provided that the corresponding integrals exist, for instance, in the following important cases \cite[4.19.2-4]{Edwards}:
\begin{itemize}
    \item [a)] The functions $f$ and $g$ are in $L^1(G,\mathbb{C})$.
    \item [b)] $f \in L^1_{\rm loc}(G,\mathbb{C})$ and $g\in L^1(G,\mathbb{C})$ with compact support.
    \item [c)] $f \in L^1_{\rm loc}(G,\mathbb{C})$ and the Radon measure $\nu$ has compact support.
\end{itemize}

Next we record some useful facts concerning convolution.

\medskip

(R1) If $f \in L^1_{\rm loc}(G,\mathbb{C})$ and $g\in \CCC_c(G,\mathbb{C})$, then $f\star g\in \CCC(G,\mathbb{C})$.

(R2) If $f$ is \emph{locally uniformly integrable} (by which we mean that there is some open set $U$ with compact closure such that $\int_{U+x} |f| \le C $ for all $x\in G$) and $g\in \CCC_c(G,\mathbb{C})$, then $f\star g \in \CCC_b(G,\mathbb{C})$. Further $f\star g$ is uniformly continuous, too.

(R3) The convolution makes the set $L^1(G,\mathbb{C})$ a commutative Banach algebra
possessing the property $\ft{f*g}=\ft{f}\ft{g}$ \cite[Theorem 1.2.4]{rudin:groups} for any $f,g\in L^1(G,\mathbb{C})$.

(R4) The convolution of general Borel functions $f$ and $g$ could be also defined by \eqref{convfunc}, at least in the pointwise sense for those $x\in G$ for which the integral
\[
\int_{G} |f(x-y)g(y)| d\lambda(y)
\]
exists. In the case where $f,g\in L^2(G,\mathbb{C})$ the convolution $f \star g$ is defined everywhere on $G$ and $f \star g \in \CCC_0(G,\mathbb{C})$ \cite[Theorem 1.1.6(d)]{rudin:groups}.

\section{Positive definite functions, integrally positive definite functions, and measures of positive type} \label{Poz}

\medskip

\textbf{Positive definite functions.}
On a LCA group $G$ a function $f$ is called \textit{positive definite} (denoted by $f \gg 0$) if the inequality
\begin{equation}\label{posdefdef}
\sum_{j=1}^{n}\sum_{k=1}^{n} c_j \overline{c_k} f(x_j-x_k)\ge 0
\end{equation}
holds for all choices of $n\in \NN$, $c_j\in \CC$ and $x_j\in G$
for $j=1,\dots,n$.
We shall frequently use the following well-known and immediate consequences of the definition. For any $f\gg 0$, we have \cite[\S 1.4.1]{rudin:groups}
\begin{itemize}
\item[(p1)] $f$ is bounded with $\|f\|_{\infty}\leq f(0)$;
\item[(p2)] the continuity of $f$ all over $G$ is equivalent to that at $0$;
\item[(p3)] $f(x)=\tilde{f}(x):=\overline{f(-x)}$ holds for all $x\in G$
\cite[p.\ 18, Eqn (2)]{rudin:groups}, whence
\item[(p4)] $\supp f$ is
symmetric and the condition $\supp f \subseteq \Om$
implies $\supp f \subseteq \Om\cap(-\Om)$.
\end{itemize}

Let us see some fundamental examples of positive definite functions.

\begin{example}
The characters of a
LCA group $G$ are positive definite. To see this, one needs to use only
the multiplicative property of $\gamma\in \widehat{G}$ to get
$$
\sum_{j=1}^{n}\sum_{k=1}^{n} c_j \overline{c_k} \gamma(x_j-x_k)
= \sum_{j=1}^{n}\sum_{k=1}^{n} c_j\gamma(x_j) \overline{c_k \gamma(x_k)}
= \left|\sum_{j=1}^{n}c_j\gamma(x_j) \right|^2\geq 0
$$
for all choices of $n\in \NN$, $c_j\in \CC$ and $x_j\in G$
for $j=1,\dots,n$.
\end{example}

Together with any $f\gg 0$ also the functions $f^{*}$, where $f^*(x):=f(-x)$, and $\overline{f}$ are positive definite functions.

To obtain further examples, assume that $f$ and $g$ are positive
definite functions on $G$. One checks easily that if $\alpha, \beta>0$
are arbitrary positive constants, then $\alpha f + \beta g \gg 0$.
It is a bit more involved to see that if $f,g\gg 0$, then so is $fg$. This follows from the purely linear algebraic fact that the entrywise product of positive semidefinite matrices is positive semidefinite, as well, which is known as Schur's theorem \cite[\S 85, Theorem 2]{Halmos}.

Taking into account that characters, and also positive linear combinations of characters are positive definite functions, we are led to

\begin{example}
The inverse Fourier transform
\eqref{invF} of any $\nu \in M_+(\ft{G})$ is positive definite.
\end{example}

As it was noted earlier, the inverse Fourier transform of a bounded positive Radon measure is not just positive definite, it is continuous, as well. According to the celebrated
Bochner-Weil-Povzner-Raikov theorem \cite{We,Po,Ra} these latter two properties characterize continuous positive definite functions: a continuous function $f \in \CCC(G,\mathbb{C})$ is positive definite
if and only if it can be obtained as the inverse Fourier transform of
some bounded positive Radon measure $\nu \in M_+({\widehat G})$.


Another fundamental and very useful way to get positive definite continuous
functions is taking convolution squares \cite[\S 1.4.2(a)]{rudin:groups}.

\begin{example} \label{E:convsquare}
Let $f\in L^2(G,\mathbb{C})$ be arbitrary. Then the "convolution square" $f\star
\widetilde{f}$ exists and it is a continuous positive definite
function.
\end{example}

This construction of positive definite continuous functions can also be
essentially reversed. The following version \cite[p. 309, (33.24) (a)]{HewittRossII} will be quite useful in the sequel.

\begin{lemma}\label{l:BoasKac}
Let $f\in \D \cap L^1(G)$ be arbitrary. Then there exists a so-called Boas-Kac square-root $g\in L^2(G)$ satisfying $g\star\widetilde{g}=f$. Furthermore, if $f\in \D_c$, then we also have $\supp g \Subset G$.
\end{lemma}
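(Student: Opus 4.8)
The plan is to diagonalise $f$ on the dual side, extract a square root there, and pull it back via Plancherel. Since $f\in\D\cap L^1(G)$ is continuous, positive definite and integrable, the inversion theorem for integrable positive definite functions (which one may read off from the Bochner--Weil--Povzner--Raikov representation $f=\check\nu$, $\nu\in M_+(\ft G)$, once $f\in L^1$ is taken into account; see \cite[\S 1.5]{rudin:groups}) gives $\ft f\ge 0$, $\ft f\in L^1(\ft G)$, and $f(x)=\int_{\ft G}\gamma(x)\,\ft f(\gamma)\,d\ft\lambda(\gamma)$ for every $x\in G$, where $\ft\lambda$ is the Haar measure on $\ft G$ in its Plancherel normalisation. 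Put $h:=\sqrt{\ft f}\ge 0$. Since $\int_{\ft G}h^2=\int_{\ft G}\ft f=f(0)<\infty$ we have $h\in L^2(\ft G)$, so by the Plancherel theorem there is $g\in L^2(G)$ whose Plancherel transform equals $h$.

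It remains to verify $g\star\widetilde g=f$. As $h$ is real, the Plancherel transform of $\widetilde g$ is $\overline h=h$, and by (R4) the convolution $g\star\widetilde g$ belongs to $\CCC_0(G)$. The standard identity for convolutions of square-integrable functions gives $\widehat{g\star\widetilde g}=\widehat g\cdot\overline{\widehat g}=h^2=\ft f\in L^1(\ft G)$; applying Fourier inversion to the integrable transform $\ft f$ then yields $(g\star\widetilde g)(x)=\int_{\ft G}\gamma(x)\,\ft f(\gamma)\,d\ft\lambda(\gamma)=f(x)$ for all $x\in G$. (The only bookkeeping to watch is that the Haar measures on $G$ and $\ft G$ be kept mutually dual throughout; this rescales $g$, not the identity.) This settles the first assertion.

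For the compact support claim the naive root $h=\sqrt{\ft f}$ will not do: if $\ft f$ vanishes somewhere, $\sqrt{\ft f}$ need not inherit the band-limitedness that $\supp f\Subset G$ imposes on $\ft f$, so the resulting $g$ need not be compactly supported. The fix is to replace the pointwise square root by a \emph{canonical (Fej\'er--Riesz--Krein) factorisation} $\ft f=F\cdot\overline F$ in which $F$ carries only half the spectral width of $\ft f$: on $\RR$ this is Krein's factorisation of a nonnegative integrable entire function of exponential type $2a$ as $|F|^2$ with $F$ entire of exponential type $a$, and then $\widehat g:=F$ satisfies $|\widehat g|^2=\ft f$ while Paley--Wiener confines $\supp g$ to the ball of radius $a$. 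For a general LCA group one reduces to this Euclidean case through the structure theorem $G\cong\RR^n\times H$ with $H$ containing a compact open subgroup: the $\RR^n$ coordinate is handled by Krein's theorem, while in the $H$ coordinate compact support amounts to support on finitely many cosets of the compact open subgroup, where the factorisation is elementary. This structural reduction --- carried out in \cite[p.~309, (33.24)(a)]{HewittRossII} --- is the step I expect to be the genuine obstacle; granting it, $g\star\widetilde g=f$ follows exactly as above, and $\supp(g\star\widetilde g)\subseteq\supp g-\supp g$ is compact.
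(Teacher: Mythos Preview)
The paper does not actually prove this lemma: it is stated with a bare citation to \cite[p.~309, (33.24)(a)]{HewittRossII} and used as a black box. So there is no ``paper's own proof'' to compare against beyond that reference.

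Your argument for the first assertion is correct and is precisely the standard one (and indeed is what underlies the Hewitt--Ross statement): $f\in\D\cap L^1$ forces $\ft f\in L^1_+(\ft G)$ by the inversion theorem, $h:=\sqrt{\ft f}\in L^2(\ft G)$, and pulling $h$ back through Plancherel gives $g$ with $g\star\widetilde g=f$. One small remark: since the paper later (in Lemma~\ref{l:convposdef}) uses a \emph{real-valued} Boas--Kac root, it is worth recording that your $g$ is automatically real, because $f$ real and positive definite makes $\ft f$ real and even, hence $h$ real and even, hence its inverse Plancherel transform real.

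For the compact-support claim you correctly diagnose the obstruction (the naive $\sqrt{\ft f}$ need not be band-limited) and sketch the right remedy (Krein/Fej\'er--Riesz factorisation on the $\RR^n$ factor, plus the structure theorem). But this sketch is not a proof: the reduction via the structure theorem and the multi-variable Krein factorisation both require genuine work, and you end up deferring to exactly the reference \cite[(33.24)(a)]{HewittRossII} that the paper cites. So for the second assertion your proposal and the paper are on equal footing --- both amount to invoking Hewitt--Ross.
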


\textbf{Integrally positive definite functions and measures of positive type.}
Beside the above concept of positive definiteness, which is due to Toeplitz \cite{Toeplitz} on $\TT$ and Matthias \cite{Mathias} on $\RR$, there is another notion of positive definiteness meaningful for LCA groups.
A function $f\in L^1_{\rm loc}(G,\mathbb{C})$ is said to be \textit{integrally positive definite} if
\begin{equation} \label{typecond}
\int_{G}f ~(u\star \tilde{u}) ~d\lambda \geq 0 \quad (\forall u\in \CCC_c(G,\mathbb{C})) \quad \textrm{or, equivalently} \quad u \star \widetilde{u} \star f ~(0) \ge 0 \quad (\forall u\in \CCC_c(G,\mathbb{C})).
\end{equation}

We note that a \emph{continuous} integrally positive definite function is necessarily positive definite \cite[Proposition 4]{God}.
Note the distinction between the classes of positive definite functions, defined finitely everywhere and satisfying \eqref{posdefdef}, and integrally positive definite functions, defined only almost everywhere in accordance with \eqref{typecond}.

The advantage of the latter weaker notion of positive definiteness is that it can be carried out for measures.
A Radon measure $\mu$ is said to be a \emph{measure of positive type}\footnote{As a matter of fact, the term 'positive type' is also used for (integrally positive definite) functions but some authors consider this as the synonym of positive definiteness. So we rather use this terminology only for measures.} whenever
\begin{equation}  \label{eq:measurepostyp}
\int_{G} (u\star \tilde{u}) ~d\mu \geq 0 \quad (\forall u\in \CCC_c(G,\mathbb{C})) \quad \textrm{or, equivalently} \quad u \star \widetilde{u} \star \mu ~(0) \ge 0 \quad (\forall u\in \CCC_c(G,\mathbb{C})).
\end{equation}

Recall that if $\mu$ is a measure in $M(G,\mathbb{C})$, then its converse $\tilde{\mu}$ is defined by $\widetilde{\mu}(f):=\overline{\mu}(f^*) = \overline{\mu(\widetilde{f})} $ for all $f\in \CCC_c(G,\mathbb{C})$
which is easily seen to be equivalent to the representing measure satisfying $\widetilde{\mu}(E):=\overline{\mu(-E)}$ for all $E \in \mathcal{B}$.
In analog with (p4) and (p5) concerning positive definite functions, measures of positive type satisfy the following properties.
\begin{itemize}
\item[(p6)] $\widetilde{\mu}=\mu$, whence
\item[(p7)] $\supp \mu$ is symmetric and $\supp \mu \subseteq \Om$
entails $\supp \mu \subseteq \Om\cap(-\Om)$.
\end{itemize}

The property (p6) might be folklore as the analogous statement for positive definite functions. However, we did not find a proper reference for it. Thus, for the sake of completeness we present a self-contained proof of (p6).
We shall invoke the following auxiliary lemma in it.

\begin{lemma} \label{l:measureestimate}
Let $F\subseteq \mathbb{C}$ be a closed set and $\mu \in M(G,\mathbb{C})$ arbitrary. If $\mu(u\star \widetilde{u})\in F$ holds for every $u\in \CCC_c(G,\mathbb{C})$, then we also have $\mu(g\star \widetilde{g})\in F$ for all $g\in L^2(G,\mathbb{C})$.
\end{lemma}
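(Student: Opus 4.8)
The plan is to exhibit $g\star\widetilde g$ as a limit (in a suitable sense) of functions $u_n\star\widetilde u_n$ with $u_n\in\CCC_c(G,\CC)$, and then use the closedness of $F$ together with the continuity of the functional $\mu$ on the relevant expressions. The natural choice is to approximate $g\in L^2(G,\CC)$ by compactly supported functions: pick $u_n\in\CCC_c(G,\CC)$ with $\|g-u_n\|_2\to 0$ (possible since $\CCC_c(G,\CC)$ is dense in $L^2(G,\CC)$). By Example \ref{E:convsquare} and fact (R4), each $u_n\star\widetilde u_n$ and $g\star\widetilde g$ lies in $\CCC_0(G,\CC)$, and in particular in $\CCC_b(G,\CC)$; moreover $u_n\star\widetilde u_n\to g\star\widetilde g$ uniformly on $G$, because
\[
\|g\star\widetilde g - u_n\star\widetilde u_n\|_\infty \le \|(g-u_n)\star\widetilde g\|_\infty + \|u_n\star(\widetilde g-\widetilde u_n)\|_\infty \le \|g-u_n\|_2\|g\|_2 + \|u_n\|_2\|g-u_n\|_2,
\]
using the Cauchy--Schwarz bound $\|\varphi\star\psi\|_\infty\le\|\varphi\|_2\|\psi\|_2$ and $\|\widetilde g\|_2=\|g\|_2$, and the right-hand side tends to $0$ since $\|u_n\|_2$ is bounded.

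The remaining issue is that $\mu$, being a bounded Radon measure, is continuous on $(\CCC_0(G,\CC),\|\cdot\|_\infty)$, so from $u_n\star\widetilde u_n\to g\star\widetilde g$ in $\|\cdot\|_\infty$ we get $\mu(u_n\star\widetilde u_n)\to\mu(g\star\widetilde g)$. Since $\mu(u_n\star\widetilde u_n)\in F$ for every $n$ by hypothesis and $F$ is closed, the limit $\mu(g\star\widetilde g)$ also lies in $F$. This completes the argument.

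The only point requiring a little care — and the place I expect to be the main obstacle — is making sure all convolution squares involved genuinely lie in $\CCC_0(G,\CC)$ (so that $\mu$ may be applied and is continuous), and that the mixed convolutions $(g-u_n)\star\widetilde g$ etc. make sense and satisfy the stated $L^\infty$ bound; both follow from fact (R4) and the standard Cauchy--Schwarz estimate for the convolution of two $L^2$ functions. One should also note $\widetilde{u_n}\to\widetilde g$ in $L^2$, which is immediate from $\|\widetilde\varphi\|_2=\|\varphi\|_2$ and linearity of $\varphi\mapsto\widetilde\varphi$. With these routine verifications in place the proof is complete.
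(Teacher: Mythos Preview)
Your proof is correct and follows essentially the same approach as the paper: approximate $g$ in $L^2$ by $u\in\CCC_c$, decompose $g\star\widetilde g - u\star\widetilde u=(g-u)\star\widetilde g + u\star(\widetilde g-\widetilde u)$, bound each piece by the $L^2$--$L^2$ convolution inequality (the paper calls it Young's inequality, you call it Cauchy--Schwarz), and conclude via continuity of $\mu$ on $\CCC_0$ and closedness of $F$. The only cosmetic difference is that the paper phrases it as an $\varepsilon$-argument with a single approximant while you use a sequence $(u_n)$.
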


\begin{proof}
Choose an $ \varepsilon> 0 $ and let $g\in L^2(G,\mathbb{C})$ be arbitrary. Note that $g\star \tilde{g} \in \CCC_0(G,\mathbb{C})$ and thus $\mu(g\star \tilde{g})$ exists.
Let us take a function $u\in \CCC_c(G,\mathbb{C})$ such that $\|u-g\|_2 < \varepsilon$.
Observe that $\mu(g\star \widetilde{g})-\mu(u\star \widetilde{u})=\mu((g-u)\star \widetilde{g})+\mu(u\star \widetilde{(g-u)})$ where the terms on the right hand side
can be estimated by Young's inequality as
\[
\left|\mu((g-u)\star \widetilde{g})\right| \leq \|\mu\| \| (g-u)\star \widetilde{g})\|_\infty \leq  \|\mu\| \|g\|_2 \cdot \varepsilon
\]
and similarly
$$
\left|\mu(u\star \widetilde{(g-u)})\right| \leq \|\mu\|
\| u \star (\widetilde{g-u})\|_\infty \leq  \|\mu\| \left( \|g\|_2 + \ve \right)\cdot \varepsilon .
$$
Therefore, for any given function $g\in L^2(G,\mathbb{C})$ the distance
$ \left|\mu(g\star \widetilde{g})-\mu(u\star \widetilde{u})\right|$ can be made arbitrarily small. So any neighborhood of $\mu(g\star \widetilde{g})$ intersects to $F$. Since $F$ is closed, $\mu(g\star \widetilde{g})$ belongs to $F$.
\end{proof}

\begin{proof}[Proof of property (p6)]
Consider the measure $\nu:=\mu-\widetilde{\mu}$ together with its Fourier transform $\ft{\nu}$.
It follows from Lemma~\ref{l:measureestimate} that $\mu (g \star \widetilde{g})\geq 0$ for all $g\in L^2(G,\mathbb{C})$. Take $f:=g \star \widetilde{g}$. In virtue of (R4) $f \in \CCC_0(G,\mathbb{C})$, and clearly $\widetilde{f}=f$. Further we also have $\overline{\mu(f)}=\mu(f)$ because $\mu(f)\geq 0$.
Thus, $$\nu(f)=\mu(f)- \widetilde{\mu}(f)=\mu({f})- \overline{\mu (\widetilde{f})} =\mu(f)- \overline{\mu (f)}=0.$$
Since $\{g\star \widetilde{g} ~ : ~ g\in L^2(G,\mathbb{C})\}$ matches the class $\{\overline{g\star \widetilde{g}} ~ : ~ g\in L^2(G,\mathbb{C})\}$ we infer
\begin{equation} \label{eq:zero}
\int_G \overline{g\star \widetilde{g}} d\nu = 0 \quad (g\in L^2(G,\mathbb{C})).
\end{equation}

The $L^2$-Fourier transform of $g$ being $\widehat g$, we want to show first that the function $g\star \widetilde{g} \in \CCC_0(G,\mathbb{C})$ is the usual $L^1$-inverse Fourier transform of the function $h:=|\widehat g|^2$ in $L^1_{+}(\widehat G,\mathbb{C})$.
Indeed, the $L^2$-Fourier transform is an isometry, whence $\widehat g \in L^2(\widehat G,\mathbb{C})$ and so $h:=|\widehat g|^2 \in L^1_{+}(\widehat G,\mathbb{C})$. Note that $ \widehat{\widetilde{g}} = \overline{\widehat{g}}$, by \cite[(23.10.) Theorem (iv)]{HewittRossI}. Moreover, according to \cite[(31.29) Theorem]{HewittRossII} the inverse Fourier transform of $h=\widehat g \cdot \widehat{\widetilde g} $ (which is identical to the inverse Fourier transform of the nonnegative measure $d\sigma:=h d\lambda_{\widehat G}$), is exactly $g\star \widetilde{g}$.

Now it follows from \eqref{eq:zero} and \eqref{eq:parseval} that
\begin{align*}
0 &= \int_G  \overline{\check{\sigma}} d\nu = \int_{\ft{G}} \widehat{\nu} d \overline{\si} = \int_{\ft{G}} \widehat{\nu} \cdot \overline{h}d \lambda_{\widehat{G}}
\end{align*}
where $\check{\si}= \check{h} = g\star \widetilde{g}, ~ ~ d \sigma = h d\lambda_{\ft{G}}, ~ h=|\widehat g|^2$.
So the integral on $\ft{G}$ of the function $\widehat{\nu} \overline{h}$ vanishes for all $h\in L^1_{+}(\ft{G},\mathbb{C})$, yielding $\widehat{\nu} =0$ first $\lambda_{\ft{G}}$-a.e., and then by continuity everywhere. Therefore, $\nu$ is the zero measure and the proof is complete.
\end{proof}

Although we will not need it here, (p6) can be extended by standard means also to general (i.e. possible unbounded) $\mu \in R(G,\mathbb{C})$.
Let us note that Professor Hal\'asz kindly provided us another, fully general proof relying more on Fourier-Stieltjes transform.

\medskip

\textbf{Positive definiteness and typeness in the real sense.}\label{sec:realposdef}
In the sequel we need a restricted notion of positive and integrally positive definiteness, and positive typeness for the real analysis treatment of duality.
So we introduce the following notions. A real valued function $f$ is \emph{positive definite in the real sense} if it satisfies \eqref{posdefdef} for all real values of the coefficients $c_j$. Analogously, a measure $\mu$ is said to be \emph{a measure of positive type in the real sense} whenever it satisfies \eqref{typecond} for arbitrary real valued weight functions $u\in \CCC_c(G,\mathbb{R})$.

The structural content of positive definiteness originally refers to complex coefficients or weights.
We will see that restricting the conditions to only real coefficients or weights is not equivalent to the restriction of positive definite functions or measures of positive type to assume real values only. More precisely, we have the following statements.

\begin{proposition}\label{pr:realrestriction}
A function $f$ is positive definite and real valued if and only if it is positive definite in the real sense and even.

In addition, a measure $\mu$ is of positive type and real valued if and only if it is a measure of positive type in the real sense and even.
\end{proposition}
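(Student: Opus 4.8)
The plan is to prove the two equivalences in parallel, since the function case and the measure case run along the same lines, using the characterization of positive definiteness (resp.\ positive typeness) in terms of complex versus real quadratic forms together with the parity properties (p3)--(p4) (resp.\ (p6)--(p7)) already available.

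First I would treat the easy direction. Suppose $f$ is positive definite and real valued. Then \eqref{posdefdef} holds for all complex $c_j$, so in particular for all real $c_j$, which gives positive definiteness in the real sense. Evenness is exactly (p3): for a real valued $f$, $f(x)=\tilde f(x)=\overline{f(-x)}=f(-x)$. The same argument, reading \eqref{typecond}/\eqref{eq:measurepostyp} with $u\in\CCC_c(G,\RR)$ and invoking (p6), settles the easy direction for measures.

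For the converse, suppose $f$ is real valued, even, and positive definite in the real sense; I want \eqref{posdefdef} for arbitrary complex $c_j=\alpha_j+\mathrm{i}\beta_j$. Expand the quadratic form: since $f$ is real valued,
\[
\sum_{j,k} c_j\overline{c_k}\, f(x_j-x_k)
= \sum_{j,k}(\alpha_j\alpha_k+\beta_j\beta_k)f(x_j-x_k)
+ \mathrm{i}\sum_{j,k}(\beta_j\alpha_k-\alpha_j\beta_k)f(x_j-x_k).
\]
The real part is a sum of two real quadratic forms of the type controlled by real positive definiteness, hence $\ge 0$. For the imaginary part I would symmetrize in the index pair $(j,k)$: the coefficient $\beta_j\alpha_k-\alpha_j\beta_k$ is antisymmetric, while evenness of $f$ makes $f(x_j-x_k)=f(x_k-x_j)$ symmetric, so the double sum cancels to $0$. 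Thus the whole expression is real and nonnegative, establishing $f\gg 0$. The measure statement is handled identically with $u\in\CCC_c(G,\CC)$ written as $u=v+\mathrm{i}w$ with $v,w\in\CCC_c(G,\RR)$: one expands $u\star\tilde u$, uses that $\mu$ is real to split $\mu(u\star\tilde u)$ into real and imaginary parts, bounds the real part by real positive typeness applied to $v$ and to $w$ separately, and kills the imaginary part using evenness of $\mu$ (property (p7), giving $\mu(\widetilde{h})=\mu(h)$ for the relevant real combinations) — here one should be a little careful to write $u\star\tilde u$ out in terms of $v\star\tilde v$, $w\star\tilde w$ and the cross terms $v\star\tilde w$, $w\star\tilde v$, and to note that the cross terms contribute purely imaginary amounts that cancel by the symmetry of $\mu$.

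The main obstacle, modest as it is, is the bookkeeping in the converse for measures: one must verify that after decomposing $u$ into real and imaginary parts the ``diagonal'' terms $\mu(v\star\tilde v)$ and $\mu(w\star\tilde w)$ are exactly the quantities guaranteed nonnegative by positive typeness in the real sense, and that the ``off-diagonal'' contribution $\mu(v\star\tilde w)+\mu(w\star\tilde v)$ vanishes — this is where evenness of $\mu$ is essential and where one has to unwind the definition $\widetilde{\mu}(h)=\overline{\mu(\widetilde h)}$ together with the real-valuedness of $\mu$ on real inputs. For the function case there is the additional harmless point that \eqref{posdefdef} a priori only asserts nonnegativity of a complex number, so one must remark that the imaginary part being zero (by the antisymmetry argument) is part of the conclusion, not an assumption. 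None of this requires anything beyond (p3)--(p7) and elementary algebra.
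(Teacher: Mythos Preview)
Your proposal is correct and follows essentially the same route as the paper: split complex coefficients (resp.\ weights) into real and imaginary parts, control the diagonal terms by the real hypothesis, and kill the cross terms using evenness --- the paper packages the function case as the linear-algebra equivalence $\langle Az,z\rangle\ge 0$ for all $z\in\CC^n$ $\iff$ $A=A^T$ and $\langle Ay,y\rangle\ge 0$ for all $y\in\RR^n$, which is exactly your expansion. One small slip in your bookkeeping paragraph: the off-diagonal quantity you need to vanish is the \emph{difference} $\mu(w\star\tilde v)-\mu(v\star\tilde w)$, not the sum; since $(w\star\tilde v)^*=v\star\tilde w$ for real $v,w$, evenness of $\mu$ (giving $\mu(h)=\mu(h^*)$) indeed forces this difference to zero.
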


\begin{proof}
Since positive definiteness of a function $f:G\to \mathbb{C}$ is characterized by the positive semi-definiteness (in the linear algebraic sense) of the matrix $[f(x_j-x_k)]_{j=1,...,n}^{k=1,...,n}$ for all $n$-tuple $(x_1,\ldots x_n)$,
the first statement follows from the following equivalence:
\[
\langle Az,z \rangle \geq 0 ~ (z\in \mathbb{C}^n)\quad \iff \quad \langle Ay,y \rangle \geq 0 ~ (y\in \mathbb{R}^n), ~ A=A^T
\]
where $A$ is an $n$ by $n$ real matrix and ${}^T$ stands for transposition.

Next we turn to the analogous assertion for measures of positive type.
Assume first that $\mu$ is an even measure of positive type in the real sense, and let $w=u+iv \in \CCC_c(G,\mathbb{C})$ be a complex weight function. For any $u\in \CCC_c(G,\mathbb{C})$ the convolution square $f:=u\star \widetilde{u}$ is positive definite, and obviously it satisfies $\widetilde{f} \equiv f$, too. Thus, we have
$$
\int w\star \widetilde{w} d\mu = \int u\star \widetilde{u} d\mu + \int v\star \widetilde{v} d\mu + i \int \left( -u\star \widetilde{v} + v\star \widetilde{u} \right) d\mu \ge 0,
$$
for the first two integrals need to be nonnegative by assumption, while the function $-u\star \widetilde{v} + v\star \widetilde{u}$ under the last integral sign is odd, hence is orthogonal to the even measure $\mu$.

Conversely, assume now that $\mu$ is a real-valued measure of positive type. Then (p6) entails the evenness of $\mu$, while validity of the integral condition $\int u\star\widetilde{u} d\mu \ge 0$ for real valued weights $u\in \CCC_c(G,\mathbb{R})$ follows from the same, assumed for arbitrary complex weights.
\end{proof}

In accordance with the above, the classes of real functions and measures satisfying the corresponding positive definiteness conditions \eqref{posdefdef} and \eqref{eq:measurepostyp} just for real coefficients or weights are larger than the real valued ones of positive definite functions or measures of positive type, respectively.

\section{The dual cone of the cone of doubly positive continuous functions}\label{sec:dualcone}

In this section our purpose is to characterize the dual cone of doubly positive continuous functions in $M(G):=M(G,\mathbb{R})=(\CCC_0(G,\mathbb{R}),\|.\|_{\infty})'$.
To this end, first let us recall some basic notions and facts concerning dual cones.
Assume that $E$ is a real Banach space with dual space $E'$. If $K\subseteq E$ is a set, then the cone generated by $K$ will be denoted by $\cone(K)$. For any set $S \subseteq E$, the dual cone of $S$ is denoted by $S^{+}$ and defined as
$$
S^{+}=\left\{ \varphi \in E' \, : \, \varphi(x) \geq 0 \quad (\forall x \in S) \right\}.
$$
Note that $S^{+}$ is always a closed cone.
Let us introduce the following notation.
\begin{itemize}
    \item[a)] $\PP$: the cone of nonnegative continuous functions;
    \item[b)] $\PP_c$, $\PPP$, $\PP_{\infty}$: the nonnegative cones of $\CCC_{c}(G,\mathbb{R})$ and $\CCC_{0}(G,\mathbb{R})$, respectively, that is, $$\PP_c = \mathcal{P} \cap \CCC_{c}(G,\mathbb{R}), \qquad  \PPP=\mathcal{P} \cap \CCC_{0}(G,\mathbb{R}), \qquad \PP_{\infty}=\PP \cap \CCC_b(G,\mathbb{R}); $$
    \item[c)] $\mathcal{D}$: the family of all continuous real valued positive definite functions;
    \item[d)]  $\mathcal{D}_c, \mathcal{D}_0$: the cones of positive definite elements in $\CCC_{c}(G,\mathbb{R}),~ \CCC_0(G,\mathbb{R})$, respectively, that is,
\begin{equation*} \label{eq:ddef}
\mathcal{D}_c=\D \cap \CCC_{c}(G,\mathbb{R}), \qquad \DD=\D \cap \CCC_{0}(G,\mathbb{R});
\end{equation*}
   \item[e)] $\DL$: the set of real valued integrally positive definite functions.
\end{itemize}

A convex cone is subject to the appropriate version of the Krein-Milman or Choquet theorem, so its closure consists of limits of linear combinations of its extreme points. However, describing the extreme points of e.g. $\PPP\cap\mathcal{D}_0$ is still an open problem even for the most immediate case of $G=\RR$. For more on this problem, attributed to Choquet, the interested reader can consult with the publication \cite{JMR}.

In the lights of the above, one may expect that the dual cone of $\PPP\cap\mathcal{D}_0$ is even less easy to describe. Still, we will find a straightforward description.
Our major tool to the solution of this problem will be an intersection formula on the dual cones of the intersection of two cones. Basically, what we are after is an intersection formula stating $(A\cap B)^+=A^+ + B^+$, the point being that from the generally true statement $(A\cap B)^+=\overline{A^+ +  B^+}$, where the closure is taken with respect to the weak-star topology, we would like to get rid of the closure.

Such theorems are known to hold particularly when the intersection of the cones is large enough or one of the cones has a nonempty interior. However, the cones we are considering here will not provide us such easy criteria for an intersection formula to hold, as neither $\PPP$ nor $\mathcal{D}_0$ has a nonempty interior in $(\CCC_0(G,\mathbb{R}),\|\cdot\|_\infty)$. Indeed, for any $f\in \PPP$ the $\varepsilon$-neighbourhood contains ultimately negative functions because $f$ vanishes at infinity. Furthermore, it is even more obvious that for any $f\in  \mathcal{D}_0$ one finds $\CCC_0(G,\mathbb{R})$ functions arbitrarily close to it in $\|\cdot\|_\infty$ but not admitting the property (p4) of positive definite functions. The same way, (p2) fails for arbitrarily close functions, too.
To circumvent the difficulty, we invoke a lesser known version of the intersection formula which gives the same conclusion under different hypothesis. The precise formulation of the corresponding statement (see \cite[Lemma 2.2.]{jeya-wolk} or \cite[Section 15.D]{GeomAnal}) reads as follows.

\begin{lemma}\label{l:Jeya-Wolk} Assume that $A$ and $B$ are closed convex sets in a real Banach space $E$. If ${ \bf 0} \in A \cap B$ and $\cone(B-A)$ is a closed subspace of $E$, then in $E'$ we have
\begin{equation} \label{eq:insecform}
\ler{A \cap B}^{+}=A^{+} + B^{+}.
\end{equation}
\end{lemma}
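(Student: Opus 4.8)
The inclusion $A^{+}+B^{+}\subseteq(A\cap B)^{+}$ is immediate and carries no content: if $\alpha\in A^{+}$, $\beta\in B^{+}$ and $x\in A\cap B$, then $(\alpha+\beta)(x)=\alpha(x)+\beta(x)\ge 0$ because $x$ lies in both $A$ and $B$. So the entire content is the reverse inclusion, and the plan is to deduce it from two consequences of the hypothesis that $\cone(B-A)$ is a closed subspace: first, a strong conical hull intersection property, namely $\overline{\cone A}\cap\overline{\cone B}=\overline{\cone(A\cap B)}$; and second, the weak-star closedness of the convex cone $A^{+}+B^{+}$ in $E'$. I would first record the elementary fact that $\cone(A\cap B)=\cone A\cap\cone B$, valid whenever $0\in A\cap B$ and $A,B$ are convex: if $x=sa=tb$ with $s,t\ge 0$, $a\in A$, $b\in B$, then $x/\max(s,t)\in A\cap B$ by convexity and $0\in A\cap B$, so $x\in\cone(A\cap B)$.

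Granting these two facts, the reverse inclusion follows by separation. Suppose, for contradiction, that some $\varphi\in(A\cap B)^{+}$ does not lie in $A^{+}+B^{+}$. Since $A^{+}+B^{+}$ is then a weak-star closed convex cone, Hahn--Banach separation in the locally convex space $(E',\sigma(E',E))$, whose topological dual is $E$, produces an $x\in E$ with $\psi(x)\le 0<\varphi(x)$ for every $\psi\in A^{+}+B^{+}$; the separating constant may be taken to be $0$, as one separates a point from a cone. Taking $\psi\in A^{+}$ (that is, $\psi+0$) gives $\psi(-x)\ge 0$ for all $\psi\in A^{+}$, i.e.\ $-x\in(A^{+})^{+}$, and likewise $-x\in(B^{+})^{+}$; by the bipolar theorem $(A^{+})^{+}=\overline{\cone A}$ and $(B^{+})^{+}=\overline{\cone B}$. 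By the strong intersection property, $-x\in\overline{\cone(A\cap B)}$, and since $\varphi$ is nonnegative on $A\cap B$, hence, by continuity, on $\overline{\cone(A\cap B)}$, we get $\varphi(-x)\ge 0$, i.e.\ $\varphi(x)\le 0$, contradicting $\varphi(x)>0$. This closes the proof modulo the two facts above.

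The heart of the matter, and the step I expect to be the main obstacle, is extracting those two facts from the closed-subspace hypothesis. Writing $M:=\cone(B-A)$, which is assumed to be a closed subspace, one has $M=\spn(B-A)$ and, since $B-A$ is convex and contains $0$, $t(B-A)\subseteq n(B-A)$ for $t\le n$, so $M=\bigcup_{n\ge 1}n\,\overline{B-A}$. As $M$ is a Banach space, the Baire category theorem forces the closed convex set $\overline{B-A}$ to have nonempty interior in $M$, and then $B-A$ itself, being convex and dense in its closure, has a relative interior point $m_{0}=b_{0}-a_{0}\in\inter_{M}(B-A)$. Such a Slater-type point is exactly the qualification condition under which the normal-cone sum rule holds, which gives the strong intersection property; and it also powers the weak-star closedness: if $\rho_{i}+\sigma_{i}\to\varphi$ weak-star with $\rho_{i}\in A^{+}$, $\sigma_{i}\in B^{+}$, then $(\rho_{i}+\sigma_{i})$ is norm bounded by the uniform boundedness principle, and pairing with a small ball around $m_{0}$ contained in $B-A$, together with the sign conditions $\rho_{i}(a)\ge 0$ and $\sigma_{i}(b)\ge 0$, forces $\|\rho_{i}\|$ and $\|\sigma_{i}\|$ to remain bounded; Banach--Alaoglu then yields weak-star cluster points $\rho\in A^{+}$, $\sigma\in B^{+}$ with $\rho+\sigma=\varphi$. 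It is precisely here that completeness of $E$ and the closed-subspace hypothesis are used essentially; the remainder is bookkeeping with the bipolar theorem.
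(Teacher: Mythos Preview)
The paper does not prove this lemma at all; it is quoted verbatim from the external references \cite[Lemma 2.2]{jeya-wolk} and \cite[Section 15.D]{GeomAnal}, so there is no in-paper argument to compare against. Your task was therefore to supply a proof from scratch, and your outline follows the standard architecture for such results: Baire category to produce a Slater-type interior point of $B-A$ inside $M$, then weak-star closedness of $A^{+}+B^{+}$, then a clean separation argument combined with the bipolar theorem.

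There is, however, a genuine gap at the step where you pass from $\inter_{M}\overline{B-A}\neq\emptyset$ to $\inter_{M}(B-A)\neq\emptyset$. Your justification, ``$B-A$ itself, being convex and dense in its closure, has a relative interior point'', is not a valid inference in infinite dimensions. For a discontinuous linear functional $f$ on a Banach space $E$, the convex set $C=\{x:f(x)<1\}$ satisfies $0\in C$, $\cone C=E$ (so the closed-subspace hypothesis holds with $M=E$), and $\overline{C}=E$, yet $\inter C=\emptyset$. What distinguishes the situation in the lemma from this counterexample is that $A$ and $B$ are assumed \emph{closed}, a hypothesis you never invoke at this step; extracting the Slater point requires a more careful argument (of Robinson--Ursescu type, or as carried out in the cited sources) that genuinely uses this closedness. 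Since both your strong-CHIP claim and your boundedness estimate for the nets $(\rho_{i}),(\sigma_{i})$ rest on having a ball inside $B-A$ rather than inside $\overline{B-A}$, this is the load-bearing step of the whole proof.

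A secondary issue: the uniform boundedness principle bounds weak-star convergent \emph{sequences}, not nets, so as written your closedness argument does not establish weak-star closedness. The standard repair is to invoke the Krein--\v{S}mulian theorem to reduce to nets already lying in a fixed norm ball, after which your Banach--Alaoglu extraction goes through.
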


It seems highly non-trivial that the conditions of Lemma~\ref{l:Jeya-Wolk} hold when we consider the real Banach space $E={\CCC_0}(G,\mathbb{R})$ and its subsets $A=\PPP$, $B=\mathcal{D}_0$. This is our point with the next lemma.

\begin{lemma}\label{l:D-P} For any $f\in {\CCC_0}(G,\mathbb{R})$, there exists $F \in \mathcal{D}_0$ such that $f\le F$. In other words, $\mathcal{D}_0-\PPP={\CCC_0}(G,\mathbb{R})$.
\end{lemma}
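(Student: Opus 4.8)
The plan is to construct, for each $f\in\CCC_0(G,\mathbb R)$, a continuous positive definite function $F$ that dominates $f$ pointwise, and then deduce the algebraic identity $\mathcal D_0-\PPP=\CCC_0(G,\mathbb R)$ from this. The second statement is immediate from the first: if $f\in\CCC_0(G,\mathbb R)$ is arbitrary and we find $F\in\mathcal D_0$ with $f\le F$, then $F-f\in\PPP$ (it is continuous, vanishes at infinity, and is nonnegative), so $f=F-(F-f)\in\mathcal D_0-\PPP$; the reverse inclusion $\mathcal D_0-\PPP\subseteq\CCC_0(G,\mathbb R)$ is trivial since both $\mathcal D_0$ and $\PPP$ sit inside $\CCC_0(G,\mathbb R)$. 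So the whole content is the existence of the majorant $F$.

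To produce $F$, the natural tool is Example~\ref{E:convsquare}: convolution squares $g\star\widetilde g$ with $g\in L^2(G,\mathbb C)$ are automatically continuous and positive definite (and they lie in $\CCC_0$ by (R4)), so any such function is a legitimate candidate for $F$. The idea is to choose $g$ so that $g\star\widetilde g$ is large near $0$ and, more importantly, large on the whole (symmetric, compact) set where $f$ is not yet dominated. First I would reduce to the case where $f$ is itself compactly supported and symmetric: replacing $f$ by $\max(f(x),f(-x))$ (still in $\CCC_0$) we may assume $f$ is even, and since $f$ vanishes at infinity, for a majorant purpose it is enough to dominate $f^+=\max(f,0)$, and then to dominate a slightly enlarged, symmetric, continuous, compactly supported bump $\psi\ge f^+$ — indeed $f\le f^+\le\psi$, so a positive definite majorant of $\psi$ works for $f$. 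Thus it suffices to find $F\in\mathcal D_0$ with $F\ge\psi$ for a given even $\psi\in\PP_c$.

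For that last step I would fix a symmetric open neighbourhood $U$ of $0$ with compact closure such that $\supp\psi\subseteq U$, take $g=c\,\chi_V\in L^2(G)$ for a small symmetric open $V$ with $V+V\subseteq U$ (so that $\supp(g\star\widetilde g)\subseteq U+U$), and note that $h:=g\star\widetilde g$ is continuous, even, positive definite, $h(0)=c^2\lambda(V)>0$, and $h\ge0$ because $\chi_V\star\widetilde{\chi_V}=\chi_V\star\chi_V\ge0$. By continuity $h$ has a positive lower bound $\delta>0$ on a small symmetric closed neighbourhood $W\subseteq V$ of $0$. Now I cover $\overline U$ by finitely many translates $x_i+W$; the trouble is that a translate of a positive definite function need not be positive definite, so instead I would use the fact that $h$ is bounded below by a positive constant only near $0$, and handle the support of $\psi$ by scaling: since $\psi$ is bounded, say $\|\psi\|_\infty=M$, and $\psi$ is supported in $U$, it is enough to get a continuous positive definite function that is $\ge M$ on all of $\overline U$. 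Here I would invoke the structure theory — every LCA group contains a compact open subgroup modulo a factor isomorphic to $\mathbb R^n\times\mathbb Z^m$ — but cleaner is to use that $\mathcal D_0$ is closed under addition and under multiplication by positive scalars, and that for LCA groups there is a standard "approximate identity" family of normalized convolution squares $h_\alpha=g_\alpha\star\widetilde g_\alpha\in\mathcal D_0$ with $h_\alpha(0)=1$ converging to $1$ uniformly on compacta; then for $\alpha$ large enough $h_\alpha\ge1/2$ on $\overline{U+U}$, hence on $\supp\psi$, and $F:=2M\,h_\alpha\in\mathcal D_0$ satisfies $F\ge M\ge\psi$ on $\supp\psi$ and $F\ge0$ everywhere, so $F\ge\psi\ge f$ as required.

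The main obstacle is precisely the construction of such a positive definite $\CCC_0$ function that is bounded below by a prescribed constant on a prescribed compact set: positive definiteness is not translation invariant, so one cannot simply patch local bumps together, and one must exploit that $\{g\star\widetilde g:g\in L^2(G)\}$ (equivalently, inverse Fourier transforms of $L^1_+(\widehat G)$ functions by the Bochner–Weil–Povzner–Raikov theorem) contains, for any compact $K\Subset G$ and $\varepsilon>0$, a function $\ge1-\varepsilon$ on $K$ with value $1$ at $0$. This is exactly the existence of an approximate identity of normalized positive definite functions, a standard fact in the harmonic analysis of LCA groups, and once it is in hand the rest of the argument is the elementary reduction described above.
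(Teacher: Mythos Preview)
Your reduction step is where the argument breaks. You write that ``it is enough to dominate $f^{+}=\max(f,0)$, and then to dominate a slightly enlarged, symmetric, continuous, compactly supported bump $\psi\ge f^{+}$''. But $f^{+}\in\CCC_0(G,\mathbb{R})$ need not have compact support, so in general no compactly supported $\psi$ can satisfy $\psi\ge f^{+}$. (Take $G=\mathbb{R}$ and $f(x)=(1+x^2)^{-1}$.) Without this reduction, your single-function majorant $F=2M\,h_\alpha$ does not work either: you only control $h_\alpha$ from below on a fixed compact set, while $f$ may remain strictly positive arbitrarily far out, and there is no reason the particular $h_\alpha$ you picked should dominate $f$ there.

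The paper uses essentially the same basic ingredient you invoke at the end --- a doubly positive $g\in\CCC_c(G,\mathbb{R})$, $g\gg 0$, with $g\ge 1$ on a prescribed compact set and $\|g\|_\infty\le 1+\varepsilon$ (this is Lemma~\ref{l:Kg}; calling it an ``approximate identity'' is a misnomer, since it approximates the constant $\mathbf{1}$ locally uniformly, not the Dirac mass) --- but it does \emph{not} try to get by with a single such function. Instead it slices $f$ by the relatively compact level sets $K_n=\{|f|>2^{-n}\}$, applies Lemma~\ref{l:Kg} to each $K_n$ to obtain $g_n$, and sets $F:=\sum_{n\ge 1}2^{1-n}g_n$. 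The geometric weights make the series normally convergent (so $F\in\CCC_0$ and $F\gg 0$), and on each shell $K_n\setminus K_{n-1}$ the single term $2^{1-n}g_n$ already dominates $f$, while $F\ge 0$ handles the set $\{f\le 0\}$. This infinite superposition is exactly what is needed to cover the possibly noncompact positivity set of $f$, and it is the piece missing from your argument.
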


The proof rests heavily on the following lemma concerning the locally uniform approximation of the constant one function.

\begin{lemma}\label{l:Kg} Let $K\Subset G$ be arbitrary. Then for any $\varepsilon>0$ there exists $g\gg 0$, $g\in \CCC_{c}(G,\mathbb{R})$ with  $g\ge 0$, $g|_K \ge 1$ and $\|g\|_\infty=g(0)\le 1+\varepsilon$.
\end{lemma}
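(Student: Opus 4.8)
The goal is to build, for a prescribed compact set $K$ and tolerance $\varepsilon$, a single nonnegative, continuous, compactly supported, positive definite function $g$ that is at least $1$ on $K$ and has supremum (attained at the origin) at most $1+\varepsilon$. The natural engine for producing continuous positive definite compactly supported functions is the convolution-square construction from Example~\ref{E:convsquare}: for $u\in\CCC_c(G,\CC)$ the function $u\star\widetilde u$ is continuous, positive definite, has compact support (contained in $\supp u-\supp u$), and value at $0$ equal to $\|u\|_2^2$. The plan is to take $u=c\,\chi_V$ (or a continuous approximation thereof) for a suitable small symmetric open neighbourhood $V$ of $0$ with compact closure, so that $g:=u\star\widetilde u$ is automatically nonnegative (as a convolution of two nonnegative functions), automatically even and positive definite, and supported in the compact set $\overline{V}-\overline{V}$.

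First I would fix a symmetric relatively compact open neighbourhood $W$ of $0$. For the normalization: if $u=c\chi_V$ with $V$ symmetric, then $(u\star\widetilde u)(0)=c^2\lambda(V)$ and $(u\star\widetilde u)(x)=c^2\lambda(V\cap(V+x))$, so the function is maximal at $0$ with value $c^2\lambda(V)$, and it is $\ge c^2\lambda(V)-c^2\lambda(V\setminus(V+x))$. Choosing $c^2=1/\lambda(V)$ makes $g(0)=1$ exactly, and $g(x)\ge 1-\lambda(V\triangle(V+x))/\lambda(V)$. So what is needed is: (i) translation continuity of Haar measure on relatively compact sets, i.e. that $\lambda(V\triangle(V+x))$ is small uniformly for $x$ in a neighbourhood of $0$; and (ii) the existence of a symmetric open $V$ with $\overline V$ compact such that the \emph{set of $x$ for which $g(x)\ge 1$} contains $K$ — but here is the catch: with this crude choice $g(0)=1$ and $g(x)<1$ for $x\neq 0$ whenever $V\triangle(V+x)\ne\emptyset$, so $g|_K\ge 1$ fails unless $K=\{0\}$.

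The fix is to first produce $g_0:=u\star\widetilde u$ normalized so that $g_0\ge 1-\delta$ on $K$ and $g_0(0)=1$, which one obtains by choosing $V$ to be a \emph{large} symmetric relatively compact neighbourhood — large enough that $\lambda(V\triangle(V+x))/\lambda(V)<\delta$ for all $x\in K$ (possible because $K$ is compact, $\lambda$ is translation-invariant hence $\lambda(V+x)=\lambda(V)$, and $\lambda(V\setminus(V+x))\le\lambda(V\setminus K_0)$-type estimates can be made small by enlarging $V$ relative to $K$; concretely take $V=K+K+W$ or an iterate, so that $V\setminus(V+x)$ for $x\in K$ lies in a boundary layer of controlled measure). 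Then rescale: set $g:=\frac{1}{1-\delta}g_0$, so that $g|_K\ge 1$ and $g(0)=\frac{1}{1-\delta}\le 1+\varepsilon$ provided $\delta$ is chosen with $\frac{1}{1-\delta}\le 1+\varepsilon$, i.e. $\delta\le\frac{\varepsilon}{1+\varepsilon}$. All the required properties are then inherited: $g\gg 0$ and even and continuous because $g_0$ is (Example~\ref{E:convsquare}, plus scaling by a positive constant), $g\ge 0$ because $g_0$ is a convolution of nonnegatives, $g\in\CCC_c(G,\RR)$ because $\supp g_0\subseteq\overline V-\overline V$ is compact, and $\|g\|_\infty=g(0)$ because for a positive definite function $\|g\|_\infty\le g(0)$ by (p1) while $g(0)$ is of course attained.

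The one genuinely non-routine point, which I would state and prove as the crux of the argument, is the uniform smallness of $\lambda(V\triangle(V+x))/\lambda(V)$ over $x\in K$ for suitably chosen large relatively compact $V$. This is a quantitative form of ``approximate invariance'' / the existence of Følner-type sets adapted to $K$, but here in the LCA setting it follows from softer facts: Haar measure is translation invariant, regular, and inner/outer regular, so one can pick an open relatively compact symmetric $U_0\supseteq K$, then enlarge by convolving indicator sets or by taking $V=\{x: d\text{-type neighbourhood}\}$; more cleanly, take $V_n$ an increasing exhausting sequence of symmetric relatively compact open sets, note $\lambda(V_n\setminus(V_n+x))\le\lambda((V_n+K)\setminus V_n)$ — no, the clean bound is $V_n\setminus(V_n+x)\subseteq V_n\setminus(V_n\cap(V_n-x))$, and for $x\in K$ this is contained in the ``$K$-boundary'' $(V_n+K)\setminus V_{n}'$ for a slightly shrunk $V_n'$; using that $\lambda(V_n+K)/\lambda(V_n)\to 1$ is exactly the Følner property, which holds in every amenable — in particular every abelian — locally compact group. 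Rather than invoke amenability by name, in a fully self-contained write-up I would instead exploit (R2)/translation continuity directly: the map $x\mapsto u\star\widetilde u(x)=\langle T_{-x}u,u\rangle_{L^2}$ is continuous by strong continuity of translation on $L^2(G)$, so $g_0$ is continuous with $g_0(0)=1$; but continuity alone only gives $g_0\ge 1-\delta$ on a \emph{neighbourhood} of $0$, not on $K$ — so the Følner/large-$V$ idea genuinely is needed, and that is the step I expect to require the most care.
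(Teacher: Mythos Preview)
The paper does not prove this lemma at all; it simply cites \cite[Problem~5]{God} and \cite[2.6.8 Theorem]{rudin:groups} and moves on. So you are supplying an argument where the paper supplies a reference.

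Your overall strategy---take $g_0=\frac{1}{\la(V)}\chi_V\star\widetilde{\chi_V}$ for a large symmetric relatively compact $V$, observe $g_0(x)=\la(V\cap(V+x))/\la(V)$, arrange $g_0\ge 1-\de$ on $K$, and rescale---is exactly the standard route (and is what the cited references do). You have correctly identified the one substantive ingredient: the existence, for given compact $K$ and $\de>0$, of a relatively compact symmetric $V$ with $\la(V\triangle(V+x))/\la(V)<\de$ for all $x\in K$. That is precisely the F{\o}lner condition relative to $K$, and your eventual conclusion that it holds in every LCA group by amenability is the right justification.

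One caution: your tentative concrete choices do not work. Taking ``$V=K+K+W$ or an iterate'' does not in general make the ratio $\la(V\setminus(V+x))/\la(V)$ small; for instance on $\ZZ$ with $K=\{-N,\dots,N\}$ and $V=K+K$ the ratio is of order $1/2$ for $x=N$, and no fixed finite iterate helps. What is needed is $V$ \emph{much} larger than $K$ in the F{\o}lner sense, and there is no shortcut producing such $V$ uniformly from $K$ by a fixed algebraic recipe---one really must invoke amenability of abelian locally compact groups (or, equivalently, the structure theorem $G_0\cong\RR^n\times H$ with $H$ compact for an open subgroup $G_0$, from which F{\o}lner sets are built by hand). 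Since you end up invoking amenability anyway, the sketch is sound; just drop the ad-hoc $V=K+K+W$ aside so as not to mislead.
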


For details see, for instance, \cite[Problem 5]{God} or \cite[2.6.8. Theorem]{rudin:groups}.

\begin{proof}[Proof of Lemma~\ref{l:D-P}] Assume, as we may, $\|f\|_\infty=1$. Define the subsets $$K_n:=
\{x\in G~:~ |f(x)|>2^{-n}\}.$$ Therefore, $K_0=\emptyset$ and all the sets $K_n$ are relatively compact in view of "$\lim_\infty f = 0$".
Thus, $K_n$ ($n\in \mathbb{N}$) is an increasing sequence from $\BB$ with
\[
H_n:=K_n \setminus K_{n-1}  = \{x\in G~:~ 2^{-n} < |f(x)|\le 2 \cdot 2^{-n} \}.
\]
Consequently, on $H_n$ we have $f \le 2^{1-n} g_n$ with the function $g_n$ constructed for $\ve=1$ and the set $K_n$ by means of Lemma~\ref{l:Kg}. Note that
\[
\bigcup_{n=1}^\infty H_n = \bigcup_{n=1}^\infty K_n = \{ x\in G~:~ f(x)\ne 0\}.
\]
Recall that we also have $g_n\ge 0$. Therefore,
$$f \le F:=\sum_{n=1}^\infty 2^{1-n}g_n$$ on the whole $G$ where the series converges normally and thus uniformly. So we find that $F\in {\CCC_0}(G,\mathbb{R})$. Moreover, $F\in \mathcal{D}_0$ holds, too.
\end{proof}

\begin{corollary}\label{l:intersection} We have $(\PPP \cap \mathcal{D}_0)^{+}= \PPP^{+} + \mathcal{D}_0^{+}$ in ${M}(G)$.
\end{corollary}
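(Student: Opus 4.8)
The plan is to apply Lemma~\ref{l:Jeya-Wolk} with $E={\CCC_0}(G,\mathbb{R})$, $A=\PPP$ and $B=\mathcal{D}_0$. First I would observe that both $\PPP$ and $\mathcal{D}_0$ are convex cones (hence convex sets) containing the zero function: convexity and the cone property of $\PPP$ are immediate, while for $\mathcal{D}_0$ one uses that a positive combination $\alpha f+\beta g$ of positive definite functions is again positive definite (recalled in the text right after the first Example), that $\CCC_0(G,\mathbb{R})$ is a linear space, and that $0\gg 0$ trivially. So ${\bf 0}\in A\cap B=\PPP\cap\mathcal{D}_0$.

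Next I would check that $A$ and $B$ are \emph{closed} in $({\CCC_0}(G,\mathbb{R}),\|\cdot\|_\infty)$. Closedness of $\PPP$ is clear, since a uniform limit of nonnegative functions is nonnegative. For $\mathcal{D}_0$, if $f_n\to f$ uniformly with $f_n\gg 0$, then for every fixed choice of $n$, coefficients $c_j$ and points $x_j$ the quadratic form $\sum_{j,k}c_j\overline{c_k}f_n(x_j-x_k)$ is nonnegative and converges to $\sum_{j,k}c_j\overline{c_k}f(x_j-x_k)$, which is therefore nonnegative; so $f\gg 0$, and since $f\in\CCC_0(G,\mathbb{R})$ by closedness of that space, $f\in\mathcal{D}_0$.

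The crucial hypothesis of Lemma~\ref{l:Jeya-Wolk} is that $\cone(B-A)$ be a closed subspace of $E$. But $B-A=\mathcal{D}_0-\PPP$, and Lemma~\ref{l:D-P} asserts precisely that $\mathcal{D}_0-\PPP={\CCC_0}(G,\mathbb{R})$, the whole space, which is certainly a closed subspace; hence $\cone(B-A)=E$ as well. This is the step that carries all the weight, and it is exactly why Lemma~\ref{l:D-P} (and behind it Lemma~\ref{l:Kg}) was proved beforehand — there is no other obstacle once that identity is in hand. With all hypotheses of Lemma~\ref{l:Jeya-Wolk} verified, the conclusion $\ler{\PPP\cap\mathcal{D}_0}^{+}=\PPP^{+}+\mathcal{D}_0^{+}$ follows immediately in $E'=M(G)$, which is the assertion of the corollary. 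I would write the proof as a short paragraph: state the three things to check ($A,B$ closed convex, $0\in A\cap B$, $\cone(B-A)$ a closed subspace), dispatch the first two in a sentence each, cite Lemma~\ref{l:D-P} for the third, and invoke Lemma~\ref{l:Jeya-Wolk}.
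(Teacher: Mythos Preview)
Your proposal is correct and follows essentially the same approach as the paper: verify that $\PPP$ and $\mathcal{D}_0$ are closed convex cones containing ${\bf 0}$, invoke Lemma~\ref{l:D-P} to get $\mathcal{D}_0-\PPP=\CCC_0(G,\mathbb{R})$ (hence $\cone(B-A)$ is the whole space, a closed subspace), and conclude by Lemma~\ref{l:Jeya-Wolk}. The paper's proof is just a terser version of exactly this argument.
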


\begin{proof} Clearly, $\PPP$ and $\DD$ are closed cones in ${\CCC_0}(G,\mathbb{R})$ and ${\bf 0} \in \DD \cap \PPP$. Furthermore, we have learnt in Lemma \ref{l:D-P} that $\DD -\PPP={\CCC_0}(G,\mathbb{R})$ which is obviously a closed subspace of itself. Therefore, Lemma~\ref{l:Jeya-Wolk} applies.
\end{proof}

In the remaining part of the section, we give a precise description of the dual cones appearing in Corollary~\ref{l:intersection}.

\begin{theorem}\label{th:dualcone} The dual of the cone of real valued continuous nonnegative positive definite functions vanishing at infinity is the Minkowski sum of the cone of nonnegative measures, the cone of measures of positive type $\MM$ in $M(G)$ and the family of odd measures $\mathcal{O}$ in $M(G)$. That is, we have
$$
(\PPP\cap\DD )^+=M_+(G) + \MM + \mathcal{O}.
$$
\end{theorem}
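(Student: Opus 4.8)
The plan is to combine the intersection formula from Corollary~\ref{l:intersection} with a separate identification of the two summands $\PPP^+$ and $\DD^+$ appearing there. By Corollary~\ref{l:intersection} we already know $(\PPP\cap\DD)^+ = \PPP^+ + \DD^+$ in $M(G)$, so it suffices to prove that
\[
\PPP^+ = M_+(G) \qquad\text{and}\qquad \DD^+ = \MM + \OOO,
\]
after which the desired equality follows by taking the Minkowski sum and noting that addition of cones is associative, so $M_+(G) + (\MM + \OOO) = M_+(G) + \MM + \OOO$.

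The first identity, $\PPP^+ = M_+(G)$, is the easy half. A real measure $\mu \in M(G)$ lies in $\PPP^+$ iff $\int f\,d\mu \ge 0$ for every $f \in \PPP = \PP \cap \CCC_0(G,\mathbb{R})$, i.e. for every nonnegative $f \in \CCC_0(G,\mathbb{R})$. Since $\CCC_c(G,\mathbb{R})^+$ is dense (in sup norm, hence in the relevant sense for a bounded measure) in $\CCC_0(G,\mathbb{R})^+$, this is exactly the defining condition that $\mu$ be a positive Radon measure; via the Riesz representation identification recalled in \S\ref{sec:harmanal}, $\mu \in M_+(G)$. Conversely every element of $M_+(G)$ is trivially in $\PPP^+$.

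The substantive part is the second identity, $\DD^+ = \MM + \OOO$, and this is where I expect the main obstacle to lie. The inclusion $\MM + \OOO \subseteq \DD^+$ is straightforward: if $\tau \in \MM$ is a (real) measure of positive type and $o \in \OOO$ is odd, then for $f \in \DD \subseteq \D$ the function $f$ is real-valued positive definite, hence even by Proposition~\ref{pr:realrestriction}, so $\int f\,do = 0$ since an even function integrates to zero against an odd measure; and $\int f\,d\tau \ge 0$ because any real-valued continuous positive definite $f$ in $\CCC_0(G,\mathbb{R})$ can be written as a convolution square — by Lemma~\ref{l:BoasKac}, if $f \in \DD$ then (being bounded and vanishing at infinity, after checking $f \in L^1$ is not automatic, so one may need to first approximate or pass through the Bochner representation) one gets $f = g \star \widetilde g$ and then $\int (g\star\widetilde g)\,d\tau \ge 0$ holds by definition of positive type, extended from $\CCC_c$-weights to $L^2$-functions exactly as in Lemma~\ref{l:measureestimate}. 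The reverse inclusion $\DD^+ \subseteq \MM + \OOO$ is the crux: given $\mu \in M(G)$ with $\int f\,d\mu \ge 0$ for all $f \in \DD$, decompose $\mu = \mu_e + \mu_o$ into its even and odd parts, $\mu_e(E) = \tfrac12(\mu(E)+\mu(-E))$, $\mu_o = \mu - \mu_e$, so that $o := \mu_o \in \OOO$ automatically and $\int f\,d\mu = \int f\,d\mu_e$ for all even $f$, in particular for all $f \in \DD$. It then remains to show that $\mu_e$, an even real measure with $\int f\,d\mu_e \ge 0$ for all $f \in \DD$, is a measure of positive type, i.e. lies in $\MM$. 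By Proposition~\ref{pr:realrestriction} it is enough to check $\int (u\star\widetilde u)\,d\mu_e \ge 0$ for real $u \in \CCC_c(G,\mathbb{R})$; but $u \star \widetilde u$ is a continuous positive definite function, and one must verify it is in $\DD$, i.e. that it vanishes at infinity — this holds when $u \in L^2$ by (R4), and $u \in \CCC_c \subseteq L^2$, so indeed $u \star \widetilde u \in \DD$ and the inequality follows. The delicate point to get right — and the one I expect to be the real obstacle — is making sure no integrability or boundedness subtlety obstructs either the Boas–Kac representation of a general $f \in \DD$ (which a priori need not be in $L^1$) or the density/approximation arguments; one likely needs to reduce to $f \in \DD \cap L^1$ by a truncation or convolution mollification and use that such $f$ are dense in $\DD$ in a topology against which $\mu$ is continuous, mirroring the role Lemma~\ref{l:measureestimate} plays for measures of positive type.
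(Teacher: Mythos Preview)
Your proposal is correct and follows essentially the same route as the paper: invoke Corollary~\ref{l:intersection}, identify $\PPP^+ = M_+(G)$, and prove $\DD^+ = \MM + \OOO$ via the even/odd decomposition of $\mu$ together with Proposition~\ref{pr:realrestriction} and the observation that $u\star\widetilde{u}\in\DD$ for $u\in\CCC_c(G,\RR)$. The $L^1$ obstacle you flag for the inclusion $\MM \subseteq \DD^+$ is precisely what Proposition~\ref{P:newBochner} handles (via multiplication by the compactly supported approximate identity $k$ of Lemma~\ref{l:Kg}), so you may cite it directly rather than re-deriving the approximation.
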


For the proof we need the following characterization of measures of positive type.

\begin{proposition}\label{P:newBochner}
For a bounded Radon measure $\mu$, the following statements are equivalent.
\begin{itemize}
    \item[(i)] $\mu$ is of positive type;
    \item[(ii)] $\mu(f)\geq 0$ holds for all continuous positive definite $f \gg 0$.
\end{itemize}
\end{proposition}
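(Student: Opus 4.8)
The plan is to establish the two implications separately, with the direction (ii) $\Rightarrow$ (i) being essentially immediate and the direction (i) $\Rightarrow$ (ii) carrying the real content. For (ii) $\Rightarrow$ (i): given any $u\in\CCC_c(G,\CC)$, the convolution square $f:=u\star\widetilde u$ is a continuous positive definite function by Example~\ref{E:convsquare}, so (ii) yields $\mu(u\star\widetilde u)=\int_G(u\star\widetilde u)\,d\mu\ge 0$, which is exactly the defining condition \eqref{eq:measurepostyp} for $\mu$ to be of positive type. (Strictly, one should note $u\star\widetilde u\in\CCC_0(G,\CC)$ so that $\mu(f)$ makes sense against the bounded Radon measure $\mu$, but this is guaranteed by (R4) together with $\CCC_c\subseteq L^2$.)

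For the harder direction (i) $\Rightarrow$ (ii), I would argue by approximating an arbitrary continuous positive definite $f$ by functions of the form $g\star\widetilde g$ against which $\mu$ is already known to be nonnegative. First, by Lemma~\ref{l:measureestimate} applied with the closed set $F=[0,\infty)$, the hypothesis $\mu(u\star\widetilde u)\ge 0$ for all $u\in\CCC_c(G,\CC)$ upgrades to $\mu(g\star\widetilde g)\ge 0$ for \emph{all} $g\in L^2(G,\CC)$. Now let $f\gg 0$ be continuous. By the Bochner-Weil-Povzner-Raikov theorem, $f=\check\sigma$ for some $\sigma\in M_+(\widehat G)$. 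The idea is then to decompose $\sigma$ into a bounded piece supported on a large compact set plus a small tail: write $\sigma=\sigma_K+\sigma_K'$ where $\sigma_K$ is the restriction of $\sigma$ to a compact $K\Subset\widehat G$ chosen so that $\sigma_K'(\widehat G)<\varepsilon$. Then $\check\sigma_K$ is the inverse Fourier transform of a finite positive measure of compact support; since $\sigma_K$ has a density-free but compactly supported structure, one approximates $\sigma_K$ in total variation by measures of the form $h\,d\lambda_{\widehat G}$ with $h\in L^1_+(\widehat G)\cap L^\infty$ compactly supported (this uses regularity of $\sigma$ and a standard mollification on $\widehat G$), and for such $h$ the Boas-Kac-type factorization (as exploited in the proof of (p6), via \cite[(31.29) Theorem]{HewittRossII}) writes $\check h=g\star\widetilde g$ with $g\in L^2(G,\CC)$, whence $\mu(\check h)=\mu(g\star\widetilde g)\ge 0$. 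Passing to the limit, $\mu(\check\sigma_K)\ge 0$, and since $\|\check\sigma_K'\|_\infty\le\sigma_K'(\widehat G)<\varepsilon$ we get $\mu(f)=\mu(\check\sigma_K)+\mu(\check\sigma_K')\ge -\varepsilon\|\mu\|$. Letting $\varepsilon\to 0$ gives $\mu(f)\ge 0$.

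An alternative, and perhaps cleaner, route for (i) $\Rightarrow$ (ii) avoids the truncation: approximate $f\gg 0$ \emph{locally uniformly} by convolution squares. Indeed, if $g_n\gg 0$, $g_n\in\CCC_c(G,\RR)$ with $g_n\to 1$ locally uniformly (Lemma~\ref{l:Kg}), then $g_n f\gg 0$ (product of positive definite functions, by Schur's theorem), $g_n f\in\CCC_c(G,\RR)$, and $g_nf\to f$ uniformly since $f$ vanishes at infinity; so it suffices to treat compactly supported $f\in\mathcal{D}_c$. For such $f$, Lemma~\ref{l:BoasKac} provides a Boas-Kac square root $g\in L^2(G)$ with $g\star\widetilde g=f$ and $\supp g\Subset G$, and then $\mu(f)=\mu(g\star\widetilde g)\ge 0$ by the upgraded Lemma~\ref{l:measureestimate}. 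This handles everything in one stroke.

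The main obstacle is the measurability/integrability bookkeeping needed to guarantee that all the convolution squares involved land in a class where $\mu$ can be legitimately evaluated and where the relevant limits commute with integration against $\mu$: specifically, that $g\star\widetilde g\in\CCC_0(G,\CC)$ for $g\in L^2$ (which is (R4)) and that uniform convergence $g_nf\to f$ transfers through the bounded functional $\mu$ (immediate once $\mu\in M(G)$ is used as a bounded Radon measure). Granting the lemmas quoted above — in particular the strengthening of the positive-type condition from $\CCC_c$-weights to $L^2$-weights in Lemma~\ref{l:measureestimate}, and the Boas-Kac factorization in Lemma~\ref{l:BoasKac} — the argument is short; the only real care is in checking that the product $g_n f$ is positive definite, which is exactly Schur's theorem as recalled after Example~\ref{E:convsquare}.
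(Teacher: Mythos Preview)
Your ``alternative route'' is essentially the paper's own argument --- use Lemma~\ref{l:measureestimate} to upgrade to $L^2$-weights, multiply $f$ by a compactly supported $k\gg 0$ from Lemma~\ref{l:Kg}, and invoke the Boas--Kac factorization (via \cite[(33.24)(a)]{HewittRossII}) for the resulting $L^1$ positive definite function --- but there is a real gap in your justification of why $\mu(kf)$ approximates $\mu(f)$. You write that $g_nf\to f$ \emph{uniformly} ``since $f$ vanishes at infinity''; however, (ii) is stated for \emph{all} continuous positive definite $f$, and such $f$ need not lie in $\CCC_0$ (take $f\equiv 1$, or any nontrivial character). So uniform convergence fails in general, and one cannot simply push the limit through the bounded functional $\mu$ on those grounds.

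The paper repairs exactly this point by exploiting the boundedness of $\mu$ rather than any decay of $f$: one first fixes a compact $K\Subset G$ with $|\mu|(G\setminus K)<\delta$, then chooses $k$ from Lemma~\ref{l:Kg} with $1\le k\le 1+\delta$ on $K$; the error $\int_G f(1-k)\,d\mu$ is split into the piece over $K$, controlled by $\delta\|f\|_\infty\|\mu\|$, and the piece over $G\setminus K$, controlled by $\|f\|_\infty|\mu|(G\setminus K)$. Both are small, so $\mu(f)\ge \mu(kf)-\varepsilon\ge -\varepsilon$, and letting $\varepsilon\to 0$ finishes. Your first (Bochner--truncation) sketch also has a soft spot: approximating the compactly supported measure $\sigma_K$ ``in total variation'' by absolutely continuous measures is generally impossible when $\sigma_K$ has a singular part; at best you get weak$^*$ (or vague) approximation via mollification, and then one must argue pointwise convergence of inverse Fourier transforms plus dominated convergence against $\mu$ --- doable, but not what you wrote. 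The cleanest fix is to keep your alternative route and replace the uniform-convergence claim with the tail-of-$|\mu|$ estimate above.
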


\begin{proof}
The implication (ii)$\implies$(i) is immediate, so we verify (i)$\implies$(ii).
To see this, note that according to \cite[p. 309, (33.24) (a)]{HewittRossII} any positive definite continuous $L^1(G,\mathbb{C})$-function $f$ arises as a convolution square $f=g\star \widetilde{g}$ with some $g\in L^2(G,\mathbb{C})$. In virtue of Lemma~\ref{l:measureestimate} this means that (i) implies $\mu(f)\geq 0$ for all positive definite continuous $L^1(G,\mathbb{C})$-function $f$. Next we show that this implies (ii).

Let $f$ be arbitrary continuous positive definite function.
Assume, as we may, $\|f\|_\infty=1$.
Let now $\de>0$ be fixed and $K\Subset G$ be a compact set such that $|\mu|(G\setminus
K)<\de$. Furthermore, for the given value of $\de>0$ and for the above defined compact set $K\Subset G$, let $k$ be an approximation of the identically 1 function ${\bf 1}$, provided by Lemma \ref{l:Kg}. Apparently, we have
$$
\int f d\mu = \int_{G\setminus K} (1-k)f d\mu +
\int_K (1-k) f d\mu + \int_G k f d\mu
$$
where, using $\|f\|_{\infty}=1$, the first two terms can be estimated as
$$
\Bigl|\int_{G\setminus K}
(1-k)f d\mu \Bigr| \le |\mu|(G\setminus K) < \de, \qquad
\Bigl|\int_K (1-k) f d\mu \Bigr| \le  \|\mu\| \de.
$$
It follows that for any given $\ve>0$ and then suitable $\de>0$, $K\Subset G$ and $k \approx {\bf 1}$, it holds
$$
\Bigl| \int_{G} f d\mu - \int_{G} k f d\mu \Bigr| <\ve.
$$
Since the positive definite continuous function $kf$ belongs to $L^1(G,\mathbb{C})$, we infer $\int_{G} k f d\mu \geq 0$. Therefore, from the last displayed inequality we conclude that for any $\ve>0$ the
distance of $\int f d\mu$ from $[0,+\infty)$ is $<\ve$, whence $\int f d\mu \ge 0$, as wanted.


\end{proof}

\begin{proof}[Proof of Theorem~\ref{th:dualcone}]
According to Corollary~\ref{l:intersection}, it suffices to characterize the dual cones of the sets $\PPP$ and $\DD$ themselves. For the first, $\mathcal{P}_0^+=M_{+}(G)$ is well-known.
As for the second, it follows from Proposition~\ref{P:newBochner} that $\mathcal{M} \subseteq \DDp \subseteq \DD^+ $. Moreover, if $o$ is an odd measure, then for any real valued continuous positive definite function $f$ the integral $o(f)=\int_G f d o$ vanishes (for $f$ is even, too). So we find $\OOO \subseteq \DD^+$ which yields $\MM+\OOO \subseteq \DD^+$.

To prove the converse, assume that $\mu$ lies in $\DD^+$. As $\CCC_c(G,\mathbb{R})\star \CCC_c(G,\mathbb{R}) \subseteq \CCC_0(G,\mathbb{R})$, we see that $\mu$ is a measure of positive type in the real sense.
Observe that for any Radon measure, we always have for arbitrary weights $u\in \CCC_c(G,\mathbb{R})$ that $$\widetilde{\mu}(u\star \widetilde{u}) = \int (u\star \widetilde{u}) d\overline{\mu^*} = \int (u\star \widetilde{u})^* \overline{ d\mu} = \overline{\int \widetilde{(u\star \widetilde{u})} d\mu }= \overline{\int (u\star \widetilde{u}) d \mu} = \overline{\mu(u\star \widetilde{u})}$$ for $\widetilde{(u\star \widetilde{u})}=u\star \widetilde{u}$. Therefore, $\widetilde{\mu}$ is a measure of positive type in the real sense, too.
It means that the even part $\nu$ of $\mu$, that is, $\nu:=(\mu+\widetilde{\mu})/2$ is a measure of positive type and thus $\nu \in \mathcal{M}$.

Now the desired characterization follows rather easily. Indeed, we just need to note that if $\mu\in M(G)$, then $\mu$ can be decomposed uniquely as $\mu=\nu+o$ with $o:=(\mu-\widetilde{\mu})/2 \in \OOO$. The proof is complete.
\end{proof}

The following Bochner type characterization of measures of positive type will also be useful.

\begin{proposition}\label{P:comBochner}
For a bounded Radon measure $\mu$, the following statements are equivalent.
\begin{itemize}
    \item[(i)] $\mu$ is a measure of positive type;
    \item[(ii)] the Fourier-Stieltjes transform $\widehat{\mu}$ is nonnegative.
\end{itemize}
\end{proposition}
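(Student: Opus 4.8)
The plan is to prove the equivalence (i)$\iff$(ii) by reducing everything to the function case, where the Bochner–Weil–Povzner–Raikov theorem already does the work. First I would dispose of the easy direction (ii)$\implies$(i): if $\widehat{\mu}\ge 0$, then for any $u\in\CCC_c(G,\CC)$ the function $u\star\widetilde u$ has Fourier–Stieltjes transform $|\widehat u|^2\ge 0$, so applying the Plancherel/Parseval identity \eqref{eq:parseval} (with $\sigma$ the measure $|\widehat u|^2\,d\lambda_{\widehat G}$, whose inverse Fourier transform is $u\star\widetilde u$ by the computation already carried out in the proof of (p6)) gives
\[
\mu(u\star\widetilde u)=\int_G \overline{\check\sigma}\,d\mu=\int_{\widehat G}\widehat\mu\,\overline{d\sigma}=\int_{\widehat G}\widehat\mu\,|\widehat u|^2\,d\lambda_{\widehat G}\ge 0,
\]
so $\mu$ is of positive type. (Here I should be slightly careful that $\check\sigma=u\star\widetilde u$ is real-valued and even in the relevant sense so the conjugates are harmless; alternatively one works directly with $\mu(w\star\widetilde w)$ for complex $w$ without taking conjugates, since $\widehat{w\star\widetilde w}=|\widehat w|^2$.)

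For the harder direction (i)$\implies$(ii), I would argue as follows. Suppose $\mu$ is of positive type. The goal is to show $\widehat\mu(\gamma_0)\ge 0$ for each fixed $\gamma_0\in\widehat G$. The natural idea is to "localize" $\mu$ to an $L^1$ positive definite function and invoke the scalar Bochner theorem. Concretely: by Proposition~\ref{P:newBochner}, $\mu(f)\ge 0$ for every continuous positive definite $f\gg 0$. Apply this with $f=k\cdot\overline{\gamma_0}\cdot\gamma_0$-type functions — more precisely, take $f_K := \gamma_0\cdot k$ where $k$ is a nonnegative positive definite $\CCC_c$ approximation of $\mathbf 1$ on a large compact set $K$, supplied by Lemma~\ref{l:Kg}. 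Wait — $\gamma_0 k$ need not be positive definite. Instead, the right choice is $f=\overline{\gamma_0}\,(u\star\widetilde u)$ for $u\in\CCC_c(G,\CC)$: the product of the character $\overline{\gamma_0}$ (positive definite) with the positive definite $u\star\widetilde u$ is positive definite by Schur's theorem, it is continuous, and it lies in $L^1$, with $\widehat{f}(\gamma)=|\widehat u|^2(\gamma+\gamma_0)$. Then $\mu(f)\ge 0$ reads $\int_{\widehat G}|\widehat u(\gamma+\gamma_0)|^2\,\widehat\mu(\gamma)\,d\lambda_{\widehat G}(\gamma)\ge 0$ via \eqref{eq:parseval} and Lemma~\ref{l:measureestimate}. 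Now letting $|\widehat u|^2$ run through an approximate identity concentrated near $-\gamma_0$ (realizable since such functions are exactly $|\widehat u|^2$ with $u\in\CCC_c$, by Boas–Kac, Lemma~\ref{l:BoasKac}, combined with density), the left side tends to $\widehat\mu(0)$... which is the wrong point. Let me instead arrange $\widehat{f}(\gamma)=|\widehat u(\gamma-\gamma_0)|^2$ by taking $f=\gamma_0\cdot(u\star\widetilde u)$ — checking: $\widehat{\gamma_0 h}(\gamma)=\widehat h(\gamma-\gamma_0)$, and $\gamma_0\cdot(u\star\widetilde u)$ is positive definite as a product of two positive definite functions. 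Then $\mu(f)\ge 0$ becomes $\int_{\widehat G}|\widehat u(\gamma-\gamma_0)|^2\widehat\mu(\gamma)\,d\lambda_{\widehat G}(\gamma)\ge 0$, and choosing $|\widehat u|^2$ to be an approximate identity at $0$ (so $|\widehat u(\cdot-\gamma_0)|^2$ concentrates at $\gamma_0$) and using that $\widehat\mu$ is continuous, the integral converges to $\widehat\mu(\gamma_0)\ge 0$, as desired.

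The step I expect to be the main obstacle is the last one: justifying that $|\widehat u|^2$ can be taken to be (or dominate) a genuine approximate identity at the origin of $\widehat G$, i.e. a nonnegative $L^1$ function of unit mass concentrating near $0$. This requires knowing that every sufficiently nice nonnegative $\CCC_c(\widehat G)$ function — or at least a cofinal family of bumps — is of the form $|\widehat u|^2$ for some $u\in\CCC_c(G,\CC)$; this is precisely the content of the Boas–Kac lemma (Lemma~\ref{l:BoasKac}) applied on $\widehat G$, whose nonnegative $L^1$ positive definite functions with compact support have compactly supported square roots, together with the fact that positive definite $\CCC_c$ functions form an approximate identity (Lemma~\ref{l:Kg}-type constructions, since $g\star\widetilde g$ with $g$ a suitable bump is positive definite and can be made to concentrate). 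Once this approximate-identity family is in hand, continuity of $\widehat\mu$ closes the argument, and the nonnegativity of $\widehat\mu$ everywhere (not just a.e.) is automatic since $\widehat\mu$ is everywhere continuous.
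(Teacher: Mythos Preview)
Your direction (ii)$\Rightarrow$(i) is fine and in the same spirit as the paper's: the paper applies \eqref{eq:parseval} to an arbitrary continuous positive definite $f=\check\sigma$ (via the Bochner representation) rather than just to $f=u\star\widetilde u$, but this is the same mechanism, only checking the stronger condition of Proposition~\ref{P:newBochner} instead of the bare definition.

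For (i)$\Rightarrow$(ii) you work far harder than necessary, and the step you yourself flag as the ``main obstacle'' is a real gap relative to the tools at hand: producing a family $|\widehat u|^2$, $u\in\CCC_c(G,\CC)$, that acts as an approximate identity on $\widehat G$ is not delivered by the lemmas you cite. In particular, Boas--Kac on $\widehat G$ gives $\phi=g\star\widetilde g$ with $g\in L^2(\widehat G)$, hence $\check\phi=|\check g|^2$ on $G$, which is the wrong direction; getting $\phi=|\widehat u|^2$ instead would require Boas--Kac on $G$ applied to $\check\phi$, and for that you would need $\check\phi\in L^1(G)$, which is not automatic. More to the point, the whole detour is avoidable. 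You already invoke Proposition~\ref{P:newBochner}, which upgrades (i) to $\mu(f)\ge 0$ for \emph{every} continuous positive definite $f$, with no support or integrability restriction. The character $\overline{\gamma_0}$ is itself such an $f$ (indeed, two lines later you use exactly this, plus Schur, for the product $\gamma_0\cdot(u\star\widetilde u)$), so taking $f=\overline{\gamma_0}$ gives $\widehat\mu(\gamma_0)=\mu(\overline{\gamma_0})\ge 0$ in one stroke. This is precisely the paper's proof. Incidentally, your aside ``$\gamma_0 k$ need not be positive definite'' is mistaken for the same reason: $\gamma_0\gg 0$ and $k\gg 0$, so Schur gives $\gamma_0 k\gg 0$.
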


\begin{proof}
Let $\gamma \in \widehat{G}$. Since the characters are continuous positive definite functions, we obtain from Proposition~\ref{P:newBochner} that $\widehat{\mu}(\gamma)=\mu(\overline{\gamma}) \geq 0$. Thus (i) implies (ii).

To see the converse, note that the assumption $\mu(f)\geq 0$ for all continuous positive definite $f$ is clearly equivalent to assuming $\mu(\overline{f})\geq 0$ for all $f$ from the same class. So represent any continuous positive definite function as
$f=\widecheck{\sigma}$ with $\sigma \in M_{+}(\widehat{G})$ being the occurring measure in the Bochner-Weil-Povzner-Raikov theorem.
Then according to \eqref{eq:parseval} we have $\mu(\overline{f})=\mu(\overline{\widecheck{\sigma}})=\overline{\sigma}(\widehat{\mu})={\sigma}(\widehat{\mu})\geq 0$ as wanted.
\end{proof}

\begin{corollary}\label{C:realsense}
A bounded Radon measure is a measure of positive type in the real sense if and only if its Fourier transform has nonnegative real part.
\end{corollary}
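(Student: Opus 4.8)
The plan is to reduce the statement about ``measures of positive type in the real sense'' to Proposition~\ref{P:comBochner} by using Proposition~\ref{pr:realrestriction}, which tells us precisely how the real-sense condition differs from the genuine positive-type condition, together with the behaviour of the Fourier–Stieltjes transform under the decomposition of a measure into its even and odd parts.

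First I would recall from Proposition~\ref{pr:realrestriction} (and its proof, via Corollary~\ref{l:intersection}'s surrounding discussion, or directly from the even-part argument in the proof of Theorem~\ref{th:dualcone}) that a bounded Radon measure $\mu$ is of positive type in the real sense if and only if its even part $\nu := (\mu + \widetilde{\mu})/2$ is a measure of positive type (in the ordinary, complex sense). Indeed, the computation $\widetilde{\mu}(u\star\widetilde{u}) = \overline{\mu(u\star\widetilde{u})}$ for real weights $u \in \CCC_c(G,\mathbb{R})$ — already carried out in the proof of Theorem~\ref{th:dualcone} — shows that $\mu$ satisfies the real-sense condition exactly when $\nu$ does, and since $\nu$ is even, Proposition~\ref{pr:realrestriction} upgrades this to $\nu$ being of positive type in the full sense. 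Then by Proposition~\ref{P:comBochner}, this is equivalent to $\widehat{\nu} \geq 0$.

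Next I would relate $\widehat{\nu}$ to $\widehat{\mu}$. Writing $\mu = \nu + o$ with $o := (\mu - \widetilde{\mu})/2$ odd, linearity of the Fourier–Stieltjes transform gives $\widehat{\mu} = \widehat{\nu} + \widehat{o}$. Since $\widetilde{\mu}$ has Fourier transform $\widehat{\widetilde{\mu}}(\gamma) = \int \overline{\gamma}\, d\widetilde{\mu} = \overline{\int \widetilde{\overline{\gamma}}\, d\mu} = \overline{\int \overline{\gamma}^{\,*}\, d\mu} = \overline{\widehat{\mu}(\gamma)}$ — using $\overline{\gamma}^{\,*} = \gamma$ is false; rather $\widetilde{(\overline\gamma)}(x) = \overline{\overline\gamma(-x)} = \gamma(-x) = \overline{\gamma(x)} = \overline\gamma(x)$, so in fact $\widehat{\widetilde\mu}(\gamma) = \overline{\widehat\mu(\gamma)}$. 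Hence $\widehat{\nu} = \tfrac12(\widehat{\mu} + \overline{\widehat{\mu}}) = \operatorname{Re}\widehat{\mu}$ and $\widehat{o} = \tfrac12(\widehat\mu - \overline{\widehat\mu}) = i\operatorname{Im}\widehat\mu$ is purely imaginary. Therefore $\widehat\nu \geq 0$ is precisely the statement that $\widehat\mu$ has nonnegative real part, which is the claimed equivalence.

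The main obstacle is bookkeeping rather than conceptual depth: one must be careful with the conjugation identities $\widehat{\widetilde\mu} = \overline{\widehat\mu}$ and with the fact that $\widehat\mu$ need not be real-valued for a general $\mu$ (unlike in the even case), so the statement genuinely concerns $\operatorname{Re}\widehat\mu$ and not $\widehat\mu$ itself. A clean way to package the argument is to prove the auxiliary fact $\widehat{\widetilde\mu} = \overline{\widehat\mu}$ once (it is essentially the measure analogue of (p3) combined with the definition of $\widetilde\mu$), then invoke it together with Proposition~\ref{P:comBochner} and the even/odd decomposition $\mu = \nu + o$ to conclude. Everything else is a one-line deduction from results already established above.
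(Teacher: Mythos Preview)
Your proposal is correct and follows essentially the same route as the paper's proof: reduce to the even part $\nu=(\mu+\widetilde{\mu})/2$, invoke Proposition~\ref{pr:realrestriction} to pass from the real-sense condition to genuine positive type for $\nu$, apply Proposition~\ref{P:comBochner} to get $\widehat{\nu}\ge 0$, and use the identity $\widehat{\widetilde{\mu}}=\overline{\widehat{\mu}}$ to rewrite $\widehat{\nu}=\Re\widehat{\mu}$. The paper packages this inside a longer chain of equalities $\DD^{+}=\NNN=\NNN^{2}=\NNN^{\wedge}_{+}=\widetilde{\mathcal{N}}$ (needed later in Remark~\ref{R:MNO}), whereas you go straight to the corollary; the key ingredients and the logic are identical.
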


\begin{proof}[Proof of Corollary~\ref{C:realsense}]
For temporary use, let us introduce the following sets.
\begin{itemize}
    \item[a)] $ \NNN:= \{\mu \in M(G) ~:~ \mu\left( u \star \widetilde{u}\right) \ge 0 ~ \mbox{for all } u\in {\CCC_c}(G,\mathbb{R}) \}$
    \item[b)]
        $ \NNN^2:= \{\mu \in M(G) ~:~ \mu\left( u \star \widetilde{u} \right)  \ge 0 ~ \mbox{for all } u\in {L^2}(G,\mathbb{R}) \}$
    \item[c)] $\NNN^{\wedge}_{+}:=\{ \mu \in M(G) ~:~ \Re \widehat{\mu}(\gamma) \ge 0  \}$
    \item[d)] $\widetilde{\mathcal{N}}:=\{ \mu \in M(G) ~ : ~ \mu+\widetilde{\mu} \in \mathcal{M} \}$
    \end{itemize}
We establish the following equivalences.
\begin{equation} \label{Nnek}
    \DD^+ = \NNN = \NNN^2 = \NNN^{\wedge}_{+} = \widetilde{\mathcal{N}}
\end{equation}
It is not difficult to check that $\widetilde{\mathcal{N}}=\mathcal{M}+\mathcal{O}$.

Using Proposition~\ref{P:newBochner} it follows that $\MM \subseteq \D^+$. Further if $o$ is an odd measure, then for any even $f\in {\CCC}_b(G,\mathbb{R})$, and so for any real valued continuous positive definite function $f$ the integral $o(f)=\int_G f d o$ vanishes. Thus, we find $\MM,\OOO \subseteq \D^+$ which furnishes $\MM+\OOO \subseteq \D^+$.

The inclusions ${\CCC_c}(G,\mathbb{R})\subseteq {L^2}(G,\mathbb{R})$ and ${L^2}(G,\mathbb{R})\star {L^2}(G,\mathbb{R}) \subseteq {\CCC_0}(G,\mathbb{R})\subseteq {\CCC}(G,\mathbb{R})$ entail that $\NNN\supseteq \NNN^2\supseteq\D_0^{+}\supseteq\D^{+}$.
So up to here we have seen that $$\MM+\OOO =\NS \subseteq \D^{+} \subseteq \D_0^{+} \subseteq \NNN^2 \subseteq \NNN.$$ Therefore, it remains to verify $\NNN \subseteq \NNN^{\wedge}_{+} \subseteq \NS$.

Now consider the inclusion $\NNN^{\wedge}_{+} \subseteq \NS$. For any real measure $\mu\in {M}(G)$ an easy calculation yields for any $\gamma \in\ft{G}$ that (see also p.16 in \cite{rudin:groups}) $$\overline{\widehat{\mu}(\gamma)} = \overline{\int_G \overline{\gamma} d\mu} = \int_G \gamma d\mu = \int_G \gamma^{*} d\mu^{*} = \int_G \overline{\gamma} d\widetilde{\mu} = \widehat{\widetilde{\mu}}(\gamma).$$
Therefore, $2 \Re \widehat{\mu}(\gamma)= \widehat{\mu}(\gamma)+\overline{\widehat{\mu}(\gamma)} = \widehat{\mu+\widetilde{\mu}}(\gamma)$, that is, $\Re \widehat{\mu} = \widehat{\nu}$ with $\nu$ being the even part of $\mu$. So if $\mu \in \NNN^{\wedge}_{+}$, then we find that the even component $\nu$ satisfies $\widehat{\nu} = \Re \widehat{\mu} \ge 0$, that is, $\nu\in M^{\wedge}_+(G)$. According to Proposition~\ref{P:comBochner} we have, however, $M^{\wedge}_+(G) = \MM$, so that $\nu \in \MM$. This proves $\mu \in \NS$ and thus $\NNN^{\wedge}_{+} \subseteq \NS$ as wanted.

To conclude the proof, we prove the inclusion $\NNN \subseteq \NNN^{\wedge}_{+}$. Note that for any Radon measure, we always have for arbitrary weights $u\in \CCC_c(G,\mathbb{R})$ that $$\widetilde{\mu}(u\star \widetilde{u}): = \int (u\star \widetilde{u})^* \overline{ d\mu} = \overline{\int \widetilde{(u\star \widetilde{u})} d\mu }= \overline{\int (u\star \widetilde{u}) d \mu} = \overline{\mu(u\star \widetilde{u})}$$ for $\widetilde{(u\star \widetilde{u})}=u\star \widetilde{u}$, always.
So in particular for any $\mu \in \NNN$ also $\widetilde{\mu} \in \NNN$. Therefore, even $\nu:=(\mu+\widetilde{\mu})/2 \in \NNN$. However, $\nu$ is also even, whence by the second part of Proposition \ref{pr:realrestriction} we find $\nu \in \MM$. Thus, in view of Proposition~\ref{P:comBochner} $\nu \in M^{\wedge}_{+}(G)$ as well. That is, we get $\widehat{\nu} \ge 0$. However, $\widehat{\nu} = \Re \widehat{\mu}$ for any $\mu\in {M}(G)$, whence this implies $\mu \in \NNN^{\wedge}_{+}$ concluding the proof.
\end{proof}

\begin{remark}\label{R:MNO}
Observe that in the course of Proof of Theorem~\ref{th:dualcone} we have established $\mathcal{D}_0^+=\mathcal{M}+\mathcal{O}$ while here we have seen $\mathcal{N}=\mathcal{M}+\mathcal{O}$.
\end{remark}

\section{Shapiro-type extremal problems on LCA groups}\label{sec:preparations}

In Section~\ref{Poz} we have introduced various function spaces and corresponding notions of positive definite functions or integrally positive definite functions. In what follows we define certain extremal quantities on these function classes.

\begin{definition}\label{def:Qdef} Let $U\in \BB$ be a symmetric neighborhood of $0$ and $k\in \NN$. Denote $kU:=\underbrace{U+U+\dots+U}_\text{$k$ {\rm times}}$. We define the extremal quantities
\begin{equation}\label{eq:Qdef}
Q(U,k):=\sup_{\substack{0 \le f \in \D \\ f \not\equiv 0 ~ {\rm loc ~ a.e.}}}  \frac{\int_{kU} f d\lambda}{\int_U f d\la};   \quad
Q_c(U,k):=\sup_{\substack{0 \le f \in \DC \\ f \not\equiv 0 ~ {\rm loc ~ a.e.}}}  \frac{\int_{kU} f d\lambda}{\int_U f d\la};   \quad
Q^\star(U,k):=\sup_{\substack{0 \le f \in \DL \\ f \not\equiv 0 ~ {\rm loc ~ a.e.}}}  \frac{\int_{kU} f d\lambda}{\int_U f d\la}.
\end{equation}
\end{definition}

In case of $\RR$ or even $\TT$ (with, for example, some small enough $\de$ and $U=]-\de,\de[$), analogous questions can be posed for any dilate $\kappa U$ of $U$ with any $\kappa>0$. However, in general LCA groups there may not exist dilates. So considering $kU$ is the next best thing to imitate the nature of the problems for $\RR$ and $\TT$. Dilates are considered for the Wiener problem even on $\RR^d$ and $\TT^d$ in \cite{GT} and the most important base sets occurring there are balls and cubes, which are centrally symmetric convex bodies, so in these cases $kU$ in the dilate sense equals to the above defined $kU$ in the multiple self-addition sense. In some other cases (for example, in cases of starlike domains) there is an asymptotic equivalence with $kU\sim k (\con U)$ in higher dimensions\footnote{Recall that according to Caratheodory's theorem in dimension $d$ for every convex combinations $v\in V:=\con U$ there exists $d+1$ elements of $U$ and nonnegative coefficients $\al_j\geq 0$ summing to 1 such that $\sum_{j=1}^{d+1} \al_j u_j =v$. Then it is easy to show that $kU^\star\subseteq kV \subseteq (k+d)U^\star$ with $U^\star:= \{\al u~:~ 0\le \al\le 1, u\in U\}$ being the "starlike hull" of $U$. It means that for large $k$ the deviation remains bounded.}. However, in general LCA groups we cannot even define $\con U$, whence there is no obvious way to compare our definition with the one in \cite{GT} analyzed in $\RR^d$ and $\TT^d$.

\begin{definition}\label{def:Sdef}
Let $U,V \in \BB$ with $U$ a symmetric neighborhood of $0$. We define the extremal quantities
\begin{equation}\label{eq:Sdef}
S(U,V):=\sup_{\substack{0 \le f \in \D \\ f \not\equiv 0 ~ {\rm loc ~ a.e.}}}  \frac{\int_{V} f d\lambda}{\int_U f d\la};   \quad
S_c(U,V):=\sup_{\substack{0 \le f \in \DC \\ f \not\equiv 0 ~ {\rm loc ~ a.e.}}}  \frac{\int_{V} f d\lambda}{\int_U f d\la};   \quad
S^\star(U,V):=\sup_{\substack{0 \le f \in \DL \\ f \not\equiv 0 ~ {\rm loc ~ a.e.}}}  \frac{\int_{V} f d\lambda}{\int_U f d\la}.
\end{equation}
\end{definition}

In particular, $Q(U,k):=S(U,kU)$ etc.

\begin{definition}\label{def:Tdef}
Let $U\in \BB$ be a symmetric neighborhood of $0$ and $g\in G$ be arbitrary. We define the extremal quantities
\begin{equation}\label{eq:Tdef}
T(U,g):=\sup_{\substack{0 \le f \in \D \\ f \not\equiv 0 ~ {\rm loc ~ a.e.}}}  \frac{\int_{U+g} f d\lambda}{\int_U f d\la};   \quad
T_c(U,g):=\sup_{\substack{0 \le f \in \DC \\ f \not\equiv 0 ~ {\rm loc ~ a.e.}}}  \frac{\int_{U+g} f d\lambda}{\int_U f d\la};   \quad
T^\star(U,g):=\sup_{\substack{0 \le f \in \DL \\ f \not\equiv 0 ~ {\rm loc ~ a.e.}}}  \frac{\int_{U+g} f d\lambda}{\int_U f d\la}.
\end{equation}
\end{definition}

That is, $T(U,g)=S(U,g+U)$ etc.

These might be called the Shapiro-type extremal problems on $\D$, $\DC$ and $\DL$, accordingly. If we replace the nonnegativity assumption $ f\geq 0$ on the integrands by simply considering arbitrary $f\gg 0$ but writing $|f|^2$ in the integrands, then we obtain the Wiener type extremal quantities. Any Shapiro-type extremal quantity is at least as large as the corresponding Wiener-type extremal quantity because for any $f\gg 0$ the function $|f|^2$ is always a doubly positive function.

Several variants of these extremal quantities appear in the literature. A basic one is the analogous quantity with $\DD$ replacing $\D$ -- we may write $Q_0(U,k), S_0(U,V), T_0(U,g)$ for the arising extremal quantities.

If the conditions are $f(x)=\sum_{j=1}^m a_j \Re \gamma_j(x)$ with $m\in \NN$,
$\gamma_j \in \ft{G}$ and $a_j\ge 0$, that is, if the class is the family of the
trigonometric polynomials from $\D$, then we may term the
corresponding special case the \emph{second Montgomery-Logan-Shapiro type
question}\footnote{The first problem is the analogous question with
Hardy-Littlewood type majorization. We do not deal with this type of
more general question here.}. The Montgomery-type question was
originally posed for Dirichlet polynomials on $\RR$. In fact, according
to the Bochner-Weil-Povzner-Raikov theorem any $f \in \D$ is the inverse Fourier transform of a nonnegative Radon measure $\nu \in M_+(\ft{G})$, and the
Montgomery-type case corresponds to the restriction of occurring
measures to nonnegative measures having finite support: $$\nu \in
M^\#_+(\ft{G}):=M_+(\ft{G}) \cap M^\#(\ft{G})$$ where
$M^\#(\ft{G}):=\{\nu \in M(\ft{G})~:~ \# \supp \nu  <\infty\}$. So we may set
$$\DN:=\{ f\in C(G)~:~ f=\check{\nu}, \nu \in  M^\#_+(\ft{G})\}.$$
Accordingly, we may denote the corresponding extremal problems
(extended over functions $0\le f \in \DN$) by $Q^\#(U,k), S^\#(U,V)$
and $T^\#(U,g)$, respectively. If again here we consider integrals of
$|f|^2$ for all $f\in \DN$, then we obtain the original "second
Montgomery-type question" -- it was posed for the comparison of the
integrals (over a centered interval on $\RR$ versus over some arbitrary
interval) of the absolute value square of a Dirichlet polynomial having
nonnegative coefficients.

Further, it is possible here to involve other restrictions on the representing measures occurring in the Bochner type representation of the continuous positive definite functions $f \in \D$ which we consider. If we take all atomic measures $\nu$ (of not necessarily finite support), we obtain the class ${\D^{\rm a}}$, and the respective problems $Q^{\rm a}(U,k), S^{\rm a}(U,V)$ and $T^{\rm a}(U,g)$; and we can as well restrict considerations to Rajchman measures (getting ${\D^{\rm R}}$) and absolutely continuous measures (getting ${\D^{\rm ac}}$). Although these (and other variants) do occur in the literature, we do not pursue these variants any further because of a good reason explained below.

\begin{theorem}\label{prop:AllS} Let $U,V \in \BB$ with $U$ a symmetric neighborhood of $0$. Then $S_c(U,V)=S^*(U,V)$.
\end{theorem}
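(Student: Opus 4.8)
The plan is to prove the two inequalities $S_c(U,V)\le S^\star(U,V)$ and $S_c(U,V)\ge S^\star(U,V)$ separately. The first is essentially trivial: every $f\in\DC$ is continuous and positive definite, hence also integrally positive definite, so that $\DC\subseteq\DL$ and the supremum defining $S^\star(U,V)$ merely runs over a larger competitor set. All the content lies in the reverse inequality. To prove it, fix an admissible $f$, i.e.\ $0\le f\in\DL$ with $f\not\equiv 0$ locally a.e.\ (so $f\in L^1_{\rm loc}(G)$ by the definition of $\DL$); the goal is to produce, out of $f$, honest elements of $\DC$ whose $V$-over-$U$ ratios approximate $\int_V f\,d\lambda / \int_U f\,d\lambda$ from below. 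This is carried out in two stages, a \emph{smoothing} stage (to gain continuity, hence genuine positive definiteness) and a \emph{truncation} stage (to gain compact support), both arranged so as to keep nonnegativity intact.

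\emph{Smoothing.} Fix a neighbourhood base $(W_\alpha)$ of $0$ consisting of symmetric relatively compact sets, and for each $\alpha$ pick $u_\alpha\in\CCC_c(G,\mathbb{R})$ with $u_\alpha\ge 0$, $\supp u_\alpha\subseteq W_\alpha$ and $\int_G u_\alpha\,d\lambda=1$; set $\phi_\alpha:=u_\alpha\star\widetilde{u_\alpha}$. Being a convolution square, $\phi_\alpha$ is positive definite; being a convolution of two nonnegative functions, it is nonnegative; and $\phi_\alpha\in\CCC_c(G,\mathbb{R})$ with $\supp\phi_\alpha\subseteq W_\alpha+W_\alpha$ and $\int_G\phi_\alpha\,d\lambda=1$, so $(\phi_\alpha)$ is a nonnegative approximate identity. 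Put $f_\alpha:=f\star\phi_\alpha$. Then $f_\alpha\in\CCC(G,\mathbb{R})$ by (R1); $f_\alpha\ge 0$ since $f,\phi_\alpha\ge 0$; and $f_\alpha$ is integrally positive definite, because for every $v\in\CCC_c(G,\mathbb{C})$ a direct computation from $f_\alpha=f\star u_\alpha\star\widetilde{u_\alpha}$ — using the convolution calculus (a compactly supported factor being present), $\widetilde{a\star b}=\widetilde a\star\widetilde b$, and that $u_\alpha$ is real — gives
\[
\int_G f_\alpha\,(v\star\widetilde v)\,d\lambda=\int_G f\,(w\star\widetilde w)\,d\lambda\ \ge\ 0,\qquad w:=u_\alpha\star v\in\CCC_c(G,\mathbb{C}),
\]
the inequality being the defining property of $f\in\DL$ applied to the compactly supported weight $w$. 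Since $f_\alpha$ is continuous and integrally positive definite, it is positive definite \cite[Proposition 4]{God}; thus $f_\alpha$ is a continuous, nonnegative, real-valued, positive definite function. Finally, since $(\phi_\alpha)$ is an approximate identity and $f\in L^1_{\rm loc}$, one has $f_\alpha\to f$ in $L^1$ on every relatively compact subset of $G$, so $\int_U f_\alpha\,d\lambda\to\int_U f\,d\lambda$ and $\int_V f_\alpha\,d\lambda\to\int_V f\,d\lambda$. This also disposes of the degenerate case $\int_U f\,d\lambda=0$: then $f=0$ $\lambda$-a.e.\ on $U$, hence for small $\alpha$ the continuous positive definite function $f_\alpha$ vanishes on a neighbourhood of $0$, so $f_\alpha(0)=0$ and thus $f_\alpha\equiv 0$ by (p1); letting $\alpha$ shrink gives $f\equiv 0$ locally a.e., contradicting admissibility. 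Hence $\int_U f\,d\lambda>0$, and so $\int_U f_\alpha\,d\lambda>0$ for all sufficiently small $\alpha$.

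\emph{Truncation.} Fix such an $\alpha$ and a compact set $K$ with $\overline U\cup\overline V\subseteq K\Subset G$. For $\varepsilon>0$ let $g\in\CCC_c(G,\mathbb{R})$ be the function from Lemma~\ref{l:Kg} for this $K$ and $\varepsilon$: $g\gg 0$, $g\ge 0$, $g|_K\ge 1$, $\|g\|_\infty\le 1+\varepsilon$. Then $f_\alpha g$ is continuous, nonnegative, compactly supported (because $g$ is), and positive definite as a pointwise product of positive definite functions (Schur's theorem, \cite[\S 85, Theorem 2]{Halmos}); that is, $f_\alpha g\in\DC$. On $U\cup V\subseteq K$ we have $1\le g\le 1+\varepsilon$, whence $\int_U f_\alpha\,d\lambda\le\int_U f_\alpha g\,d\lambda\le(1+\varepsilon)\int_U f_\alpha\,d\lambda$ and likewise over $V$; in particular $\int_U f_\alpha g\,d\lambda>0$, so $f_\alpha g$ is an admissible competitor for $S_c(U,V)$ and
\[
S_c(U,V)\ \ge\ \frac{\int_V f_\alpha g\,d\lambda}{\int_U f_\alpha g\,d\lambda}\ \ge\ \frac{1}{1+\varepsilon}\cdot\frac{\int_V f_\alpha\,d\lambda}{\int_U f_\alpha\,d\lambda}.
\]
Letting $\varepsilon\to 0$ gives $S_c(U,V)\ge \int_V f_\alpha\,d\lambda / \int_U f_\alpha\,d\lambda$ for each admissible $\alpha$, and then letting $\alpha$ shrink to $0$ (using the convergences above and $\int_U f\,d\lambda>0$) gives $S_c(U,V)\ge \int_V f\,d\lambda / \int_U f\,d\lambda$. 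Taking the supremum over all admissible $f$ yields $S_c(U,V)\ge S^\star(U,V)$, which together with the trivial direction gives the asserted equality.

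I expect the one genuinely delicate step to be the verification that the smoothed function $f_\alpha=f\star\phi_\alpha$ is positive definite. This rests on the displayed convolution identity — rewriting $\int_G f_\alpha(v\star\widetilde v)\,d\lambda$ as $\int_G f\,(w\star\widetilde w)\,d\lambda$ with $w\in\CCC_c(G,\mathbb{C})$ — for which the special shape $\phi_\alpha=u_\alpha\star\widetilde{u_\alpha}$ with $u_\alpha\ge 0$ real is essential: it is precisely this shape that makes $f_\alpha$ simultaneously nonnegative \emph{and} of positive type; and on the nontrivial fact that a continuous integrally positive definite function is positive definite. Everything else is routine: nonnegativity survives convolution with the nonnegative $\phi_\alpha$ and multiplication by the nonnegative $g$; compact support is inherited from $g$; positive definiteness of $f_\alpha g$ is Schur's theorem; and the two limiting estimates are elementary once the case $\int_U f\,d\lambda=0$ has been handled.
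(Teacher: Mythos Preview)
Your proof is correct and follows essentially the same approach as the paper: smooth an $f\in\DL$ by convolving with a compactly supported positive definite kernel (the paper packages this as Lemmas~\ref{l:Lloc}, \ref{l:convposdef}, \ref{l:nonzero}, while you build $\phi_\alpha=u_\alpha\star\widetilde{u_\alpha}$ directly and carry out the convolution identity and the $\int_U f>0$ argument inline), then truncate by multiplying with the Lemma~\ref{l:Kg} approximant of $\mathbf{1}$. The only organizational difference is that the paper passes through the intermediate quantity $S(U,V)$, proving $S^\star=S$ and $S=S_c$ separately, whereas you go from $S^\star$ to $S_c$ in one sweep; your direct construction of $\phi_\alpha$ also sidesteps the appeal to Boas--Kac roots in Lemma~\ref{l:convposdef}.
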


\begin{corollary} All these extremal problems are equivalent, that is, we have $ S_c(U,V)=S_0(U,V)=S(U,V)=S_1(U,V)=S^\star(U,V)=S^{\rm a}(U,V)=S^{\rm R}(U,V)=S^{\#}(U,V)=S^{\rm ac}(U,V)$.

The same identifications hold for the extremal quantities $Q$ and $T$, too.
\end{corollary}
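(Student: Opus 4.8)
The plan is to prove Theorem~\ref{prop:AllS}, $S_c(U,V)=S^\star(U,V)$, and then read off the Corollary by a sandwich argument. The inequality $S_c\le S^\star$ is immediate, since $\DC\subseteq\D\subseteq\DL$ (a continuous positive definite function is locally integrable, and \eqref{typecond} is the integrated form of \eqref{posdefdef}). For the reverse I would take an arbitrary nonnegative competitor $f\in\DL$, $f\not\equiv0$ locally a.e., and regularize it in two stages into a nonnegative $g\in\DC$ with essentially the same ratio $\int_V g\,d\la/\int_U g\,d\la$: first truncate $f$ to a compactly supported $L^1$-function while staying inside $\DL$, then mollify it to become continuous while staying inside $\D$. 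Each stage affects the ratio in a controlled way — the truncation by a factor $1+\varepsilon$ because we arrange $h\approx1$ on $\overline U\cup\overline V$, and the mollification because its error is $o(1)$ in the $L^1(G)$-norm, which simultaneously controls $\int_U$ and $\int_V$, with no boundary effects.

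\emph{Truncation.} Fix $\varepsilon>0$, a symmetric relatively compact neighbourhood $W\ni0$ with $\overline U\cup\overline V\subseteq W$, and (Lemma~\ref{l:Kg}) a function $h\gg0$, $h\in\CCC_c(G,\RR)$, $h\ge0$, $h|_{\overline W}\ge1$, $\|h\|_\infty=h(0)\le1+\varepsilon$. Put $f_1:=hf$. Then $f_1\ge0$, $f_1\in L^1(G)$ with compact support, and — the one genuinely structural point — $f_1\in\DL$: writing $h=\check\rho$ with $\rho\in M_+(\ft G)$ (Bochner--Weil--Povzner--Raikov), using the elementary identity $\gamma\cdot(u\star\til u)=(\gamma u)\star\widetilde{(\gamma u)}$ for $\gamma\in\ft G$, $u\in\CCC_c(G,\CC)$, and integrating the nonnegative quantities $\int_G f\cdot\big((\gamma u)\star\widetilde{(\gamma u)}\big)\,d\la$ against $\rho$, one gets (Fubini) $\int_G f_1\,(u\star\til u)\,d\la\ge0$. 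Since $1\le h\le1+\varepsilon$ on $\overline U\cup\overline V$, the ratio of $f_1$ lies within a factor $1+\varepsilon$ of that of $f$; in particular $\int_U f_1>0$, because a nonnegative $\DL$-function vanishing a.e.\ near $0$ must vanish a.e.\ (by Cauchy--Schwarz for the positive semidefinite Hermitian form $u\mapsto\mu(u\star\til u)$, $\mu=f_1\,d\la$: $\mu(u\star\til u)=0$ once $\supp(u\star\til u)$ avoids $\supp f_1$, hence $\mu(u\star\til v)=0$ for all $v$, and translating $u$ by characters forces $\ft{f_1}\equiv0$).

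\emph{Mollification.} Pick $\phi=\phi_\delta\in\CCC_c(G,\RR)$, $\phi\ge0$, $\int_G\phi\,d\la=1$, with $\psi:=\phi\star\til\phi\ge0$ supported in a symmetric neighbourhood of $0$ shrinking to $\{0\}$, and set $g:=f_1\star\psi=(f_1\star\phi)\star\til\phi$. Then $g\ge0$; $g$ is continuous with compact support, since $f_1\star\phi\in L^2(G)$ and $L^2\star L^2\subseteq\CCC_0$ by (R4); and $g\in\D$: $\ft g=\ft{f_1}\,|\ft\phi|^2\ge0$ because $\ft{f_1}\ge0$ by Proposition~\ref{P:comBochner}, while $g\in L^1\cap\CCC_0$ with $\ft g\ge0$ and $\ft g\in L^1(\ft G)$ (as $|\ft\phi|^2\in L^1$), so $g$ is the inverse Fourier transform of a finite positive measure, hence continuous positive definite; thus $g\in\DC$. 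As $\psi$ is an $L^1$-approximate identity, $\|g-f_1\|_1=\|f_1\star\psi-f_1\|_1\to0$, whence $\int_U g\to\int_U f_1>0$, $\int_V g\to\int_V f_1$, and the ratio of $g$ tends to that of $f_1$. Letting first $\varepsilon\to0$ in the truncation and then $\delta\to0$ gives $S_c(U,V)\ge\int_V f\,d\la/\int_U f\,d\la$ for every such $f$; taking the supremum yields $S_c\ge S^\star$, so $S_c=S^\star$.

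For the Corollary, $\DC\subseteq\DE\subseteq\DD\subseteq\D\subseteq\DL$ and $\DC\subseteq\D^{\rm ac}\subseteq\D^{\rm R}\subseteq\D$ (here $\DC\subseteq\D^{\rm ac}$ because, by Lemma~\ref{l:BoasKac}, $f=g\star\til g$ with $g\in L^2(G)$ of compact support, so the Bochner measure of $f$ is the absolutely continuous $|\ft g|^2\,d\la_{\ft G}$; and absolutely continuous $\Rightarrow$ Rajchman by Riemann--Lebesgue) together with Theorem~\ref{prop:AllS} force $S_c=S_0=S_1=S^{\rm ac}=S^{\rm R}=S=S^\star$. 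For the atomic and trigonometric classes, $\DN\subseteq\D^{\rm a}\subseteq\D$ gives $S^{\#}\le S^{\rm a}\le S=S_c$, and the remaining inequality $S_c\le S^{\#}$ is obtained by approximating a nonnegative $f\in\DC$ by nonnegative positive definite trigonometric polynomials: on $\RR$, periodizing $f$ over a box containing $\supp f\cup\overline U\cup\overline V$ and taking Fej\'er means produces such polynomials converging to $f$ uniformly there (nonnegative since $f\ge0$ and the Fej\'er kernel is nonnegative, with nonnegative coefficients since these are samples of $\ft f\ge0$), so the ratios converge and $S^{\#}\ge S_c$; on a general LCA group one reduces to a compactly generated open subgroup $\RR^a\times\ZZ^b\times K$ and argues factor by factor. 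Finally $Q(U,k)=S(U,kU)$ and $T(U,g)=S(U,U+g)$ with $kU,U+g\in\BB$, so everything transfers verbatim. The step I expect to be the real obstacle is precisely $S_c\le S^{\#}$: a plain weak-$*$ or Riemann-sum discretization of the Bochner measure destroys the pointwise nonnegativity of the inverse transform, and it is the Fej\'er construction — with nonnegativity forced by that of $f$ and of the summability kernel — together with its implementation on a general $G$ via structure theory, that carries the weight; within the Theorem itself the only non-routine inputs are the identity $\gamma\cdot(u\star\til u)=(\gamma u)\star\widetilde{(\gamma u)}$ keeping $hf$ in $\DL$ and the vanishing lemma giving $\int_U f_1>0$.
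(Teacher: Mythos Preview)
Your argument for Theorem~\ref{prop:AllS} is correct and runs in the reverse order from the paper's: you truncate first (multiplying $f\in\DL$ by $h\in\DC$ from Lemma~\ref{l:Kg}, and showing $hf\in\DL$ via Bochner for $h$ and the identity $\gamma\,(u\star\tilde u)=(\gamma u)\star\widetilde{(\gamma u)}$) and then mollify; the paper mollifies first (using Lemma~\ref{l:Lloc} and Lemma~\ref{l:convposdef} to get $f\star\varphi\in\D$ close to $f$ in $L^1(U\cup V)$) and then truncates by multiplying with the $k\in\DC$ of Lemma~\ref{l:Kg}, the product staying positive definite by Schur. The paper's route is marginally cleaner in that each step appeals to a lemma already isolated in the text, while yours has to establish $\D\cdot\DL\subseteq\DL$ on the fly; on the other hand your mollification lands directly in $\DC$ rather than first in $\D$. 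Your vanishing argument for $\int_U f_1>0$ is a variant of the paper's Lemma~\ref{l:nonzero}.

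For the Corollary the paper gives no separate proof, and the sandwich from Theorem~\ref{prop:AllS} covers $S_c=S_0=S=S^\star$ and (with your Boas--Kac observation $\DC\subseteq\D^{\rm ac}$) also $S^{\rm ac}=S^{\rm R}$. You rightly single out $S^{\#}$ (and hence $S^{\rm a}$) as needing more, since $\DC\not\subseteq\DN$ on noncompact $G$; your periodize-then-Fej\'er argument is sound on $\RR$ and on any $\RR^a\times\ZZ^b\times K$ (Fej\'er kernels tensor on the compact quotient, the periodization inherits $f\ge0$, and its Fourier coefficients are samples of $\ft f\ge0$). The remaining gap is your last clause: reducing a general $G$ to a compactly generated open subgroup $H$ and ``arguing factor by factor'' does give a nonnegative $p_0\in\DN(H)$ close to $f$, but lifting $p_0$ to a trigonometric polynomial on $G$ by extending each character need not preserve nonnegativity on cosets $g+H$ with $g\notin H$, so the lift may fail to be an admissible competitor for $S^{\#}$ on $G$. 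The paper does not address this point either, so your write-up already goes further than the paper in attempting it; but the step you flag as the real obstacle is indeed still open in your argument for general $G$.
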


\begin{remark} This is interesting for Logan \cite{Log}, when constructing lower estimates for the extremal constants $S^{\star}([-1,1],[-T,T|)$ on $\RR$  emphasizes the role of unfamiliar behavior of singular positive definite functions (i.e. $f\in \DL$), as opposed to familiar behavior of continuous positive definite functions (i.e. $f\in \D$). Now it seems that even if finding lower estimates may be simpler in the wider class of $\DL$ -- e.g. manipulations with special functions obtained by singular series becomes possible -- but there can be no difference regarding the extremal values themselves.
\end{remark}

\begin{remark}
The question of deciding whether the value of the extremal constant varies by passing to a different function class is often a rather challenging problem (if doable at all). As for recent investigation in this direction, we mention the publication \cite{Berdysheva}.
\end{remark}

To prove Theorem \ref{prop:AllS}, we need two auxiliary lemmas.

\begin{lemma}\label{l:Lloc}
Let $f\in L^1_{\rm loc}(G,\mathbb{R})$ be any function. Then for any function $\varphi \in \CCC_c(G,\mathbb{R})$ we have that $f\star \varphi \in \CCC(G,\mathbb{R}) \cap L^1_{\rm loc}(G,\mathbb{R})$. Furthermore, for any $V\in \BB$ and any $\varepsilon > 0$ there exists a doubly positive function $\varphi \in {\CCC_c}(G,\mathbb{R})$ such that $\|f\star \varphi - f\|_{L^1(V)}<\varepsilon$ holds. Moreover, in this last statement we can take $\ffi$ to be a constant multiple of the convolution square of the characteristic function of any sufficiently small neighborhood $W$ of $0$.
\end{lemma}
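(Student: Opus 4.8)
The plan is to dispatch the two claims in turn. The inclusion $f\star\varphi\in\CCC(G,\RR)$ is exactly (R1). For $f\star\varphi\in L^1_{\rm loc}(G,\RR)$ one fixes $K\Subset G$, writes $S:=\supp\varphi\Subset G$, and estimates by Tonelli's theorem
\[
\int_K|f\star\varphi|\,d\lambda \;\le\; \int_S|\varphi(y)|\Bigl(\int_K|f(x-y)|\,d\lambda(x)\Bigr)d\lambda(y) \;\le\; \|\varphi\|_{1}\int_{K-S}|f|\,d\lambda \;<\;\infty ,
\]
since $K-S$ is compact and $f\in L^1_{\rm loc}(G,\RR)$.

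For the approximation statement the idea is the classical one that nonnegative, $L^1$-normalized, compactly supported functions concentrating at $0$ act as an approximate identity, the only twist being that $f$ is merely locally integrable, so the convolution must be localized. Given $V\in\BB$ and $\varepsilon>0$, I would choose a relatively compact open set $V_0\supseteq\overline V$ and, by continuity of the group operations, a neighbourhood $W_0$ of $0$ with $\overline V-W_0\subseteq V_0$; then I put $f_0:=f\chi_{V_0}\in L^1(G,\RR)$, so that $f(x-y)=f_0(x-y)$ and $f(x)=f_0(x)$ whenever $x\in V$ and $y\in W_0$. By continuity of translation in $L^1(G)$ (which itself follows from density of $\CCC_c(G,\RR)$ in $L^1(G,\RR)$ together with Haar invariance) there is a neighbourhood $W_1\subseteq W_0$ of $0$ with $\|\tau_y f_0-f_0\|_{L^1(G)}<\varepsilon$ for all $y\in W_1$, where $(\tau_y h)(x):=h(x-y)$.

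Next I fix the shape of $\varphi$: pick a relatively compact open neighbourhood $W$ of $0$ with $\overline{W}-\overline{W}\subseteq W_1$, and set $\varphi:=\lambda(W)^{-2}\,(\chi_W\star\widetilde{\chi_W})$. Since $\chi_W\in L^2(G,\RR)$, Example~\ref{E:convsquare} yields that $\chi_W\star\widetilde{\chi_W}$ is continuous and positive definite; it is nonnegative because its integrand $\chi_W(x-y)\chi_W(-y)$ is, and it is supported in $\overline{W}-\overline{W}\subseteq W_1$, hence lies in $\CCC_c(G,\RR)$; finally $\int_G\chi_W\star\widetilde{\chi_W}\,d\lambda=\bigl(\int_G\chi_W\,d\lambda\bigr)\bigl(\int_G\widetilde{\chi_W}\,d\lambda\bigr)=\lambda(W)^2$ by Tonelli and $\lambda(-E)=\lambda(E)$. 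Thus $\varphi$ is a doubly positive function of the prescribed form with $\int_G\varphi\,d\lambda=1$ and $\supp\varphi\subseteq W_1$. To conclude, since $\varphi\ge0$ and $\int_G\varphi\,d\lambda=1$, for $x\in V$ one has $(f\star\varphi)(x)-f(x)=\int_G\bigl(f(x-y)-f(x)\bigr)\varphi(y)\,d\lambda(y)$ with the integrand vanishing off $W_1\subseteq W_0$, so
\[
\int_V|f\star\varphi-f|\,d\lambda \;\le\; \int_G\varphi(y)\Bigl(\int_V|f_0(x-y)-f_0(x)|\,d\lambda(x)\Bigr)d\lambda(y) \;\le\; \int_G\varphi(y)\,\|\tau_y f_0-f_0\|_{L^1(G)}\,d\lambda(y)\;<\;\varepsilon .
\]

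The main difficulty is organizational rather than conceptual: one must simultaneously accommodate that $f$ lies only in $L^1_{\rm loc}$ (forcing the passage to $f_0\in L^1$ and the inclusion $\overline V-W_0\subseteq V_0$ so that the relevant translates of points of $V$ stay inside $V_0$), that the admissible $\varphi$ is pinned down to be a scalar multiple of $\chi_W\star\widetilde{\chi_W}$ — whose support is $\overline W-\overline W$, not $W$, which is why the neighbourhoods must be nested in the order $W\subseteq\overline W-\overline W\subseteq W_1\subseteq W_0$ — and that $\varphi$ must be at once nonnegative and positive definite, which the convolution-square construction delivers once these choices have been arranged compatibly.
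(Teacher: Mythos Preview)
Your argument is correct and follows essentially the same route as the paper: localize $f$ to an $L^1$ function on a slightly larger relatively compact set, construct $\varphi$ as the normalized convolution square $\chi_W\star\widetilde{\chi_W}$ so that it is doubly positive with small support and unit mass, and then use the $L^1$ approximate-identity property. The only cosmetic differences are that you invoke continuity of translation in $L^1$ directly (the paper appeals to the known $L^1$ case from \cite{rudin:groups}) and you state the neighbourhood inclusion $\overline{V}-W_0\subseteq V_0$ as a one-line consequence of continuity and compactness, whereas the paper writes out the finite-subcover argument explicitly; your normalization $\lambda(W)^{-2}$ is in fact the correct one for $\int\varphi=1$.
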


\begin{proof} This is only a slight variant of the more familiar standard case when $f \in L^1(G,\mathbb{R})$ (see, for instance, \cite[1.1.8. Theorem]{rudin:groups}).

As $f\in L^1_{\rm loc}(G,\mathbb{R})$, we also have $f\in L^1(V')$ whenever $V'\in \BB$. So let us take a fixed open $V'$ with compact closure but with $\overline{V}\Subset V'$. Then according to the known case of summable functions applied to $g:=f|_{V'}$ there exists a neighborhood $U$ of the origin such that whenever $u$ is a nonnegative weight function with support in $U$ and total mass $\int_G u=1$, then $\|g-g\star u\|_1 <\ve $.

Now we want to construct a function $u$ which is a positive definite convolution square. To this end, we take a sufficiently small open neighborhood $W$ of $0$ (specified later) and define $\ffi:=( \chi_W \star \widetilde{\chi_{W}})/\la(W)$.
As it was noted in Example~\ref{E:convsquare}, the convolution square of an $L_2(G,\mathbb{R})$-function is positive definite, whence $\ffi \in \DC$. Moreover, it is easy to see that
$$
\ffi(x)=\frac{1}{\la(W)} \int \chi_W(x-y) \widetilde{\chi_W}(y) dy = 0
$$
unless $x \in  W-W$. Also, by construction $\ffi \ge 0$ and $\int_G \ffi =1$. Therefore, $\ffi$ makes an admissible weight function $u$ provided that $W-W \subseteq U$ which is one condition on $W$ to be satisfied when choosing it. Such open neighborhoods of $0$ exist by the continuity of the group operation $(x,y) \mapsto x-y$. So, if $W_0$ is one such neighborhood, then we will restrict ourselves to consider $W\subseteq W_0$. Continuity of $f\star \varphi$ follows from (R1).

Next we aim to ascertain that on $V$ replacing $f$ by $g:=f|_{V'}$ does not change anything, more precisely, we have
$f(x)=g(x)$ and $f\star \ffi(x)=g\star \ffi(x)$ for $x\in V$.
Clearly, we have $f=f|_{V'}$ when $V\subseteq V'$. One also has to note that $$f\star \ffi(x) =\int _G f(x-y)\ffi(y) dy = \int_{W-W} f(x-y) \ffi(y) dy$$ for $\ffi(y)=0$ if $y \not\in W-W$. It follows that in case $x-W+W \subseteq V'$ we have
$$
f\star \ffi (x) = \int_{W-W} f(x-y) \ffi(y) dy = \int_{W-W} f|_{V'}(x-y) \ffi(y) dy = g \star \ffi (x).
$$
We want this for all $x\in V$, whence our second requirement for $W$ is that $V+W-W \subseteq V'$.

It remains to see the standard fact that this is indeed satisfied for some open neighborhood $W'$ of $0$.
As $V'$ is open, certainly for any $x\in V'$ the inclusion
\begin{equation}\label{eq:xZZZZ}
x+Z+Z-Z-Z \subseteq V'
\end{equation}
holds true with some open neighborhood $Z=Z_x$ of $0$, by the continuity of the group operation. Further the sets $\{ x+Z-Z ~:~ x\in \overline{V}\}$ obviously form a cover of $\overline{V}$, that is,
$$
\overline{V} \subseteq \bigcup_{x\in\overline{V}} (x+Z_x-Z_x).
$$
By compactness, there exists a finite subcover satisfying
$$
\overline{V} \subseteq \bigcup_{j=1}^n (x_j+Z_{x_j}-Z_{x_j}).
$$
Taking $W':=\bigcap_{j=1}^n Z_{x_j}$ we finally find for any $x\in V$ that with some $j$ it holds $x\in (x_j+Z_{x_j} - Z_{x_j} )$ and thus in view of \eqref{eq:xZZZZ}
$$
x + W' - W' \subseteq (x_j+Z_{x_j}-Z_{x_j})+Z_{x_j} - Z_{x_j} \subseteq V'.
$$
If $W$ is taken as any open neighborhood contained in $W_0\cap W'$, then all the above requirements are fulfilled. Moreover, $\| f- f\star \ffi\|_{L^1(V)} = \| g- g\star \ffi\|_{L^1(V)} <\ve$ holds, as desired.
\end{proof}

The following auxiliary statement is certainly folklore, however, our references do not contain a proof for it.

\begin{lemma}\label{l:convposdef}
If $f\in \D^{\star}$ and $\varphi \in \DC$, then $f\star \varphi \in \D$.
\end{lemma}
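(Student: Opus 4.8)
The plan is to show that $f \star \varphi$ is continuous and then that it is integrally positive definite; since a continuous integrally positive definite function is positive definite by \cite[Proposition 4]{God}, this will give $f \star \varphi \in \D$. For continuity: $f \in \DL \subseteq L^1_{\rm loc}(G,\mathbb{R})$ and $\varphi \in \DC \subseteq \CCC_c(G,\mathbb{R})$, so $f \star \varphi \in \CCC(G,\mathbb{R})$ by (R1). One also wants $f \star \varphi$ real valued, which is clear since both factors are real valued. This part is routine.

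The substantive part is positive definiteness. First I would use the Boas--Kac factorization of Lemma~\ref{l:BoasKac}: since $\varphi \in \DC \subseteq \D \cap L^1(G)$ with compact support, there is $g \in L^2(G)$ with $\supp g \Subset G$ and $g \star \widetilde{g} = \varphi$. Hence $f \star \varphi = f \star g \star \widetilde{g}$. Now to verify the integral condition \eqref{typecond} for $f \star \varphi$, take any $u \in \CCC_c(G,\mathbb{R})$ and compute
\begin{equation*}
(u \star \widetilde{u}) \star (f \star \varphi)\,(0) = (u \star \widetilde{u}) \star f \star g \star \widetilde{g}\,(0) = f \star \big((u \star g) \star \widetilde{(u \star g)}\big)\,(0),
\end{equation*}
where I have used commutativity and associativity of convolution together with the identity $\widetilde{u} \star \widetilde{g} = \widetilde{u \star g}$, and the fact that $\widetilde{\widetilde{u}} = u$. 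Writing $v := u \star g$, this says $(u \star \widetilde{u}) \star (f \star \varphi)\,(0) = (v \star \widetilde{v}) \star f\,(0)$, which is $\ge 0$ because $f$ is integrally positive definite — \emph{provided} $v = u \star g$ is a legitimate test weight, i.e. lies in (or can be approximated within) the class for which the integral condition for $f$ is known to hold. Since $u \in \CCC_c$ and $g \in L^2$ with compact support, $v = u \star g$ is continuous with compact support and in $L^2$, so in particular $v \in L^2(G,\mathbb{R})$; by the $L^2$-version of the integral condition (cf.\ Lemma~\ref{l:measureestimate} applied in the convolution setting, or directly approximating $v$ in $L^2$ by $\CCC_c$ functions and passing to the limit using Young's inequality to control $(v\star\widetilde v)\star f(0)$) one gets $(v \star \widetilde{v}) \star f\,(0) \ge 0$.

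The main obstacle I anticipate is precisely this last point: making rigorous that the integral positive definiteness of $f$, a priori stated only for weights $u \in \CCC_c(G,\mathbb{R})$, extends to weights of the form $v = u \star g$ with $g \in L^2$ compactly supported. The cleanest route is to observe that $v \in \CCC_c(G,\mathbb{R})$ already — the convolution of a $\CCC_c$ function with an $L^2$ function of compact support is continuous (by (R1)) and supported in $\supp u + \supp g \Subset G$ — so in fact $v$ is directly an admissible weight and no limiting argument is needed; one must only double-check that $v$ is real valued (true, as $u, g$ are real) and that all the convolution manipulations used to pull $(u \star \widetilde u)$ and $g \star \widetilde g$ into a single convolution square are valid, which they are by the existence results in case (a)--(c) of \eqref{convfunc} since every intermediate factor is either $\CCC_c$, $L^1$ with compact support, or $L^2$. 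Care must also be taken that associativity of $\star$ holds for the particular mix of $L^1_{\rm loc}$, $\CCC_c$, and $L^2$ functions involved; this is standard but should be invoked explicitly. Once these bookkeeping points are settled, the chain $(u \star \widetilde u)\star(f \star \varphi)(0) = (v \star \widetilde v)\star f(0) \ge 0$ establishes that $f \star \varphi$ is integrally positive definite, and combined with its continuity we conclude $f \star \varphi \in \D$.
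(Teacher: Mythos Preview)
Your proposal is correct and follows essentially the same route as the paper's proof: use the Boas--Kac factorization $\varphi=g\star\widetilde g$ with $g\in L^2$ compactly supported, rewrite $(u\star\widetilde u)\star(f\star\varphi)(0)=(v\star\widetilde v)\star f(0)$ for $v:=u\star g$, observe that $v\in\CCC_c$ so the integral positive definiteness of $f$ applies directly, and conclude via continuity (R1) and \cite[Proposition~4]{God}. One small remark: you assert that $g$ is real valued, but Lemma~\ref{l:BoasKac} does not guarantee this; however, it is immaterial since the defining condition \eqref{typecond} allows complex weights $u\in\CCC_c(G,\mathbb C)$, so $v=u\star g\in\CCC_c(G,\mathbb C)$ is an admissible test function regardless.
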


\begin{proof}
Take a compactly supported Boas-Kac root $g\in L^2(G,\mathbb{R})$, provided by Lemma~\ref{l:BoasKac}, for which $\varphi=g\star \widetilde{g}$. Thus, for any weight function $u\in\CCC_c(G,\mathbb{R})$ we find
\[
u\star \widetilde{u} \star f \star \varphi (0)=u\star \widetilde{u} \star f \star g\star \widetilde{g}(0)=u\star g \star \widetilde{u} \star \widetilde{g} \star f (0) = (u\star g) \star \widetilde{u \star g} \star f (0) \geq 0
\]
where the last inequality follows from the facts that $f \in \D^{\star}$ and $u\star g \in \CCC_c(G,\mathbb{R})$ in view of (R4).
This means that $f \star \varphi$ is an integrally positive definite function, while continuity of $f\star \varphi$ follows from (R1).
Hence in virtue of \cite[Proposition 4]{God} the function $f \star \varphi$ is positive definite as well.
\end{proof}

The following technical lemma is in fact an easy consequence of the later presented version of the Gelfand-Raikov Theorem (see Theorem~\ref{th:GelfandRaikov}), however, the lemma can be proved directly in a quite elementary fashion as well.

\begin{lemma}\label{l:nonzero}
If $0 \le f\in \D^{\star}$ and $f$ is not locally almost everywhere zero, then we necessarily have $\int_{O} f > 0$ for all open $0 \in O \in \mathcal{B}_0$.
\end{lemma}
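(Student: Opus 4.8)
The plan is to argue by contraposition: suppose $f \geq 0$, $f \in \DL$, and there exists an open neighborhood $O \ni 0$ with $O \in \B_0$ such that $\int_O f = 0$; I will deduce that $f = 0$ locally almost everywhere. The first step is to exploit positivity together with vanishing of the integral over $O$ to conclude $f = 0$ almost everywhere on $O$ itself (this is immediate since $f \geq 0$). The real content is to propagate this vanishing from $O$ to all of $G$. To do this I would pick a symmetric open neighborhood $W \ni 0$ small enough that $W - W \subseteq O$ (possible by continuity of $(x,y)\mapsto x-y$), and consider the convolution square $\varphi := \chi_W \star \widetilde{\chi_W}$, which by Example~\ref{E:convsquare} lies in $\DC$ and is supported in $W - W \subseteq O$.

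Next I would use the integral positive definiteness of $f$ in a quantitative way. For a point $x \in G$, I want to test $f$ against a bump concentrated near $x$. The key identity is that for any $u \in \CCC_c(G,\mathbb{R})$,
\[
(u \star \widetilde{u} \star f)(0) = \int_G f(y)\,(u \star \widetilde{u})(y)\, d\lambda(y) \geq 0,
\]
and more generally, translating, one has control of weighted averages of $f$ over small translated sets. Concretely, take $u = \chi_{x+W}$ for a fixed $x$; then $u \star \widetilde{u}$ is supported in $x + W - W - x = W - W$... this does not move the support, so instead I would use the sharper tool: apply Lemma~\ref{l:convposdef} with $\varphi$ as above to get $f \star \varphi \in \D$, hence $f \star \varphi$ is a genuine (finitely-everywhere-defined) continuous positive definite function by (R1) and \cite[Proposition 4]{God}. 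Now evaluate: $(f\star\varphi)(0) = \int_G f(y)\varphi(-y)\,d\lambda(y) = \int_{W-W} f(y)\varphi(-y)\,d\lambda(y) = 0$ since $f$ vanishes a.e.\ on $W - W \subseteq O$ and $\varphi \geq 0$. But a continuous positive definite function $g$ with $g(0) = 0$ is identically zero, by property (p1): $\|g\|_\infty \le g(0) = 0$. Therefore $f \star \varphi \equiv 0$ on all of $G$.

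The final step is to recover $f = 0$ locally a.e.\ from $f \star \varphi \equiv 0$. Here I would invoke Lemma~\ref{l:Lloc} with a suitable choice: it tells us that $f \star \psi \to f$ in $L^1(V)$ for any $V \in \B_0$, where $\psi$ can be taken to be a normalized convolution square $(\chi_{W'} \star \widetilde{\chi_{W'}})/\lambda(W')$ for arbitrarily small $W'$. Choosing $W' \subseteq W$ so that $W' - W' \subseteq O$ as well, the same argument as above gives $f \star \psi \equiv 0$ for every such $W'$; letting $W' \to \{0\}$ and using the approximation statement of Lemma~\ref{l:Lloc} yields $\|f\|_{L^1(V)} = \lim \|f \star \psi\|_{L^1(V)} = 0$ for every $V \in \B_0$, i.e.\ $f = 0$ locally almost everywhere, as desired.

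The main obstacle I anticipate is the support bookkeeping: I need to be careful that $\operatorname{supp}(\chi_{W'} \star \widetilde{\chi_{W'}}) \subseteq W' - W' \subseteq O$ so that the pointwise value $(f\star\psi)(0)$ only sees the part of $f$ living on $O$, where $f$ is known to vanish a.e.; and I must ensure the approximating family $\psi$ from Lemma~\ref{l:Lloc} can simultaneously be chosen with this small-support property — which the final sentence of Lemma~\ref{l:Lloc} explicitly grants. A secondary subtlety is that $f\star\varphi$ and $f\star\psi$ are a priori only defined where the convolution integral converges, but since $f \in L^1_{\rm loc}$ and $\varphi, \psi$ are continuous with compact support, (R1) guarantees these are genuine continuous functions on all of $G$, so evaluating at $0$ is legitimate.
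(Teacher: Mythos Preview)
Your proof is correct and follows essentially the same route as the paper: use Lemma~\ref{l:convposdef} to see that $f\star\varphi\in\D$ for any $\varphi\in\D_c$ supported in $O$, conclude $f\star\varphi\equiv 0$ from $(f\star\varphi)(0)=0$ via (p1), and then invoke Lemma~\ref{l:Lloc} with the convolution-square approximate identities to deduce $f=0$ locally a.e. The only cosmetic differences are your explicit choice of $W$ with $W-W\subseteq O$ and the intermediate remark that $f=0$ a.e.\ on $O$; the paper simply takes an arbitrary $\varphi\in\D_c$ with $\supp\varphi\subseteq O$ and estimates $(f\star\varphi)(0)\le\|\varphi\|_\infty\int_O f=0$ directly.
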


\begin{proof}
Assume for a contradiction that there is an $0 \in O \in  \mathcal{B}_0$ such that $\int_{O} f = 0$. 
Take a $\varphi \in \D_c$ supported in $O$.
It follows directly from Lemma~\ref{l:convposdef} that $f\star \varphi \in \D$.
As a result
\[
\|f \star \varphi \|_{\infty} = (f \star \varphi)(0)=\int_{O}f(x)\varphi(-x)\leq \|\varphi\|_{\infty} \int_O f =0
\]
which yields $f \star \varphi = 0$ whenever $\varphi \in \D_c$, supported in $O$, and so in particular when $\varphi$ is a constant multiple of the convolution square of a characteristic function of a small enough neighborhood of $0$.
So it follows from Lemma~\ref{l:Lloc} that for all $\varepsilon >0$ and for any open $V\in \BB$ the estimate  $\|f\|_{L^1(V)}= \|f-f\star \varphi \|_{L^1(V)} < \varepsilon $ holds, that is, $f \equiv 0$ a.e. on $V$, whence locally a.e. on $G$.
\end{proof}

Now we are in a position to present the proof of the main result of the section.

\begin{proof}[Proof of Theorem \ref{prop:AllS}] Let $U,V\in \BB$ be given.
We divide the proof into two parts, the first being $S(U,V)=S^\star(U,V)$ while the second is $ S(U,V)=S_c(U,V)$.

First we intend to show that for any $0\le f \in \DL$ with $f\not\equiv {0}$ on the given $U$ (which by Lemma~\ref{l:nonzero} entails $\int_U f > 0 $), and any $\ve>0$ there exists another function $0\le f_1 \in \D$ with the property $\|f-f_1\|_{L^1(U \cup V)} <\ve $.
If having proved this, we get
\[
\frac{\int_V f_1}{\int_U f_1} \ge \frac{\int_V f - \ve}{\int_U f+\ve}
\]
yielding that the supremum on the left, that is, $S(U,V)$ is at least as large as the quantity $$\frac{\int_V f - \ve}{\int_U f+\ve}.$$ Taking supremum on the right (with respect to $\ve >0$ first) implies that $S(U,V) \ge S^\star(U,V)$. The other direction $S(U,V)\le S^\star(U,V)$ being obvious we will thus infer $S(U,V) = S^\star(U,V)$.

For the proof of the existence of such a function $f_1$, we apply Lemma \ref{l:Lloc} to the set $U\cup V$. Then with an appropriate doubly positive $\ffi \in \D_c$ we get $\|f-f\star\ffi\|_{L^1(U\cup V)} <\ve$. An easy application of Lemma~\ref{l:convposdef} yields $f_1:=f\star \varphi \in \D$. Since we have $\ffi\ge 0$, obviously $f\ge 0$ entails $f_1\ge 0$. This concludes the proof of the first part.

To start the second part, now we prove that any $0\le f_1\in \D$ can be approximated uniformly on the compact set $K:=\overline{U\cup V}$ by a nonnegative $f_2\in \DC$ with an error as small as it is desired. For this we just need a continuous positive definite compactly supported function $k\ge 0$ which approximates the constant 1 function ${\bf 1}$ uniformly within an arbitrarily fixed error $\ve>0$ on $K$. Such a function, even lying strictly above {\bf 1} on $K$, is provided by Lemma \ref{l:Kg} because $k\gg 0$ implies 
$$
k(z)\le k(0) \le 1+\ve
$$ 
for all $z \in G$.
We can then take $f_2:=f_1\cdot k$, which is again compactly supported (as its support is a subset of $\supp k$), continuous and also positive definite, being the product of two positive definite functions.
It follows that 
$$
\|f_1-f_1\cdot k\|_{L^\infty(K)} \le \|1-k\|_{L^\infty(K)} \cdot \|f_1\|_{L^\infty(K)} \le \ve \cdot \|f_1\|_{L^\infty(K)}
$$ 
and the function $f_2\geq 0$ provides arbitrarily good uniform approximation to $f_1$ on $K$.

To conclude the proof of the second part, let us choose any $\eta>0$ and an approximant $0\le f_2\in \DC$ satisfying $\|f_1-f_2\|_{L^\infty(K)} <\eta $. With this we find $$\int_K |f_1-f_2| \le \eta \cdot \la(K).$$ As $\la(K)<\infty$ and $\eta>0$ is arbitrarily small, we get an arbitrarily close $L^1(U\cup V)$ approximation as above implying $S(U,V)=S_c(U,V)$, as before.

Combining this with the first part $S(U,V)=S^{\star}(U,V)$ yields the full statement.
\end{proof}

\section{Proof of the main theorem}\label{sec:Halasz}

In this section we return to our original problem, and find conditions to the inequality
\begin{equation} \label{eq:int_est}
\int_G f d\nu \leq C \int_G f d\mu
\end{equation}
to hold for all continuous doubly positive functions $f$ with compact support. Here we assume that the occurring Radon measures are finite: $\mu, \nu \in {M}(G)$, or, in other words $\mu, \nu$ belong to the dual of ${\CCC_0}(G,\mathbb{R})$.

We prove the following slightly more general version of Theorem~\ref{th:Halasz}.

\begin{theorem}\label{th:summary}
Let $\mu, \nu \in {M}(G)$ be arbitrary and $C\in \RR$ be any constant. Then the following statements are equivalent:
\begin{itemize}
    \item[(i)] The inequality \eqref{eq:int_est} is satisfied for all $f\in \DC \cap\PP$.
    \item[(ii)] The inequality \eqref{eq:int_est} is satisfied for all $f\in \DD \cap\PP$.
    \item[(iii)] The inequality \eqref{eq:int_est} is satisfied for all $f\in \D \cap\PP$.
    \item[(iv)] $C \mu - \nu \in \mathcal{P}_0^{+} + \DD^{+} = M_+(G) + \MM + \mathcal{O}$.
\end{itemize}
\end{theorem}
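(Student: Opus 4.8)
The plan is to establish the cycle of implications (iii) $\Rightarrow$ (ii) $\Rightarrow$ (i) $\Rightarrow$ (iv) $\Rightarrow$ (iii). The first two implications are trivial from the inclusions $\DC \cap \PP \subseteq \DD \cap \PP \subseteq \D \cap \PP$, so the content lies in (i) $\Rightarrow$ (iv) and (iv) $\Rightarrow$ (iii). The equality $\mathcal{P}_0^{+} + \DD^{+} = M_+(G) + \MM + \mathcal{O}$ in (iv) is precisely Theorem~\ref{th:dualcone} (via Corollary~\ref{l:intersection}), so it suffices to work with the description $C\mu - \nu \in (\PPP \cap \DD)^{+}$.

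For (iv) $\Rightarrow$ (iii): assume $C\mu - \nu \in (\PPP \cap \DD)^{+}$. Given $f \in \D \cap \PP$, I want $(C\mu-\nu)(f) \ge 0$, i.e. $f$ should be tested against a functional in the dual of the cone $\PPP \cap \DD = \PP_0 \cap \D_0$. The obstacle is that $f$ need not vanish at infinity, so $f \notin \PPP \cap \DD$ in general, and moreover $\mu,\nu$ are only defined on $\CCC_0$, so $f$ need not even be $\mu$-integrable a priori — but here we are in the setting where (iii) is stated for $f$ with compact support (as emphasized at the start of Section~\ref{sec:Halasz}), so in fact $f \in \DC \cap \PP = \PP_c \cap \D_c \subseteq \PPP \cap \DD$, and the conclusion is immediate. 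More carefully: the three conditions (i), (ii), (iii) as phrased all quantify over compactly supported $f$ in the respective classes (this is the standing assumption of the section), so (i), (ii), (iii) literally coincide, each being the assertion $(C\mu - \nu)(f) \ge 0$ for all $f \in \PP_c \cap \D_c$, which by definition of dual cone is exactly $C\mu - \nu \in (\PP_c \cap \D_c)^{+}$.

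It then remains to identify $(\PP_c \cap \D_c)^{+}$ with $(\PPP \cap \DD)^{+}$. One inclusion is trivial since $\PP_c \cap \D_c \subseteq \PPP \cap \DD$. For the reverse, the key step is a density/approximation argument: any $f \in \PPP \cap \DD$ should be approximable, in a sense compatible with integration against a fixed $\mu \in M(G)$, by elements of $\PP_c \cap \D_c$. This is supplied by the machinery of Section~\ref{sec:preparations}: given $f \in \DD \cap \PP$ one multiplies by a Lemma~\ref{l:Kg} approximant $k \gg 0$, $k \in \CCC_c$, $k \ge 0$, $\|k\|_\infty \le 1+\ve$, to get $fk \in \D_c \cap \PP$ (product of positive definite functions is positive definite, Schur), and since $|f|$ vanishes at infinity one controls $|(C\mu-\nu)(f) - (C\mu-\nu)(fk)|$ by splitting the integral over a large compact set $K$ with $|C\mu - \nu|(G \setminus K)$ small and using $\|f\|_\infty \le f(0)$, exactly as in the proof of Proposition~\ref{P:newBochner}. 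Letting $\ve \to 0$ transfers nonnegativity from $\PP_c \cap \D_c$ to $\PPP \cap \DD$.

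The main obstacle I anticipate is purely bookkeeping: being careful that the various classes in (i)--(iii) are all understood with compact support (so that the integrals make sense against measures in $M(G)$ and the functions lie in $\CCC_0$), and then invoking Theorem~\ref{th:dualcone} to rewrite $(\PPP \cap \DD)^{+}$ in the explicit form $M_+(G) + \MM + \mathcal{O}$. No genuinely new estimate is needed beyond the approximation lemma and the dual-cone computation already carried out.
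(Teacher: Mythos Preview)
Your proposal contains the right ingredients --- the Lemma~\ref{l:Kg} approximation and the dual-cone computation of Theorem~\ref{th:dualcone} --- and these are exactly what the paper uses. However, there is a genuine misreading that creates a gap.

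The three classes $\D_c$, $\D_0$, $\D$ in (i), (ii), (iii) are \emph{genuinely distinct}: compactly supported, vanishing at infinity, and all continuous real positive definite functions, respectively. The opening sentence of Section~\ref{sec:Halasz} is motivational prose, not a standing hypothesis restricting everything to compact support. So (i), (ii), (iii) do not ``literally coincide'', and your claim that (iv) $\Rightarrow$ (iii) is immediate because ``$f \in \DC \cap \PP \subseteq \PPP \cap \DD$'' is simply false for general $f \in \D \cap \PP$. (Note also that $\int f\,d\mu$ is perfectly well-defined for any bounded Borel $f$ and $\mu \in M(G)$; the issue is not integrability but whether the inequality can be extended.)

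Fortunately, the approximation argument you sketch for showing $(\PP_c \cap \D_c)^+ = (\PPP \cap \DD)^+$ is essentially the paper's proof of (i) $\Rightarrow$ (iii), and it works verbatim for arbitrary $f \in \D \cap \PP$ (not just $f \in \D_0$). You do not need ``$|f|$ vanishes at infinity'': choose $K \Subset G$ with $|C\mu-\nu|(G \setminus K)$ small, take $k$ from Lemma~\ref{l:Kg} with $1 \le k \le 1+\de$ on $K$, and estimate $\int_K f(1-k)\,d[C\mu-\nu]$ and $\int_{G\setminus K} f(1-k)\,d[C\mu-\nu]$ separately using only $\|f\|_\infty = f(0)$ and $0 \le k \le 1+\de$. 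This yields (i) $\Rightarrow$ (iii); then (ii) $\Leftrightarrow$ (iv) is exactly Theorem~\ref{th:dualcone}. So the fix is: drop the misreading, and redirect your approximation argument to prove (i) $\Rightarrow$ (iii) directly, as the paper does.
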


\begin{proof}

The implications (iii) $\Longrightarrow$ (ii) $\Longrightarrow$ (i) are immediate, so we first verify (i) $\Longrightarrow$ (iii).

To this end, assume that \eqref{eq:int_est} holds for all $f \in \D_c \cap \PP$. If $\eta>0$ is chosen and $K\Subset G$ is a sufficiently large compact set, then $|\mu|(G\setminus K) <\eta$ and $|\nu|(G\setminus K)<\eta$. Hence
$$
\Bigl|\int_{G\setminus K} f d[C\mu-\nu]\Bigr| < \|f\|_\infty (|C|+1) \eta <\ve
$$
for an appropriately chosen small $\eta$.
So choose $k \in \DC$ to be an approximation of the constant 1 function ${\bf 1}$ satisfying $1\le k \le 1+\de$ on $K$ and of course $0\le k \le 1+\de$ on $G$ everywhere. Such a function exists for any choice of $\de>0$ in view of Lemma \ref{l:Kg}. With this function we find that
\[
\Bigl|\int_K (f -f\cdot k) d[C\mu-\nu] \Bigr| \le \de \|f\|_\infty \cdot \left(|C| \|\mu\| + \|\nu\|\right)
<\ve
\]
whenever $\de$ is small enough. On the other hand
\[
\Bigl|\int_{G\setminus K} f\cdot(1-k) d[C\mu-\nu]\Bigr| \le \|f\|_\infty \cdot \eta \cdot (|C|+1),
\]
as above, whence this part stays below $\ve$. Combining these we arrive at the inequality
\[
\Bigl|\int_{G} f\cdot(1-k) d[C\mu-\nu] \Bigr|<2\ve.
\]
Hence
\[
\int_{G} f d[C\mu-\nu] \ge \int_{G} (f\cdot k) d[C\mu-\nu]-2\ve \ge -2\ve
\]
because $f\cdot k \in \PP \cap\DC$ and the inequality \eqref{eq:int_est} was assumed to hold for such functions. Finding the same with arbitrary $\ve>0$ proves that \eqref{eq:int_est} holds for all $0\le f\in \D$ as well.

As a result, up to here we have seen the equivalences (i) $\Longleftrightarrow$ (ii) $\Longleftrightarrow$ (iii).

Further (ii) means that the functional $C\mu-\nu \in {M}(G)$ takes nonnegative values on functions of the intersection of the convex cones $\PPP$ and $\DD$, that is, $C\mu-\nu \in (\PPP\cap\DD)^{+}$. Description of this dual cone has been established in Theorem~\ref{th:dualcone}, whence the equivalence (ii) $\Longleftrightarrow$ (iv).
\end{proof}

\begin{proposition} \label{P:1}
$(\mathcal{P}_0\cap \mathcal{D}_0)^{+} \cap -(\mathcal{P}_0\cap \mathcal{D}_0)^{+}=\OOO$.
\end{proposition}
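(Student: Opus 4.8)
The plan is to show the two inclusions separately. The inclusion $\OOO \subseteq (\PPP\cap\DD)^{+} \cap -(\PPP\cap\DD)^{+}$ is essentially already contained in the proof of Theorem~\ref{th:dualcone}: if $o$ is an odd measure then, for every $f\in \D\cap\PP$, the function $f$ is even (by (p4), or rather the evenness of real-valued positive definite functions from Proposition~\ref{pr:realrestriction}), so $o(f)=\int_G f\,do = 0$. Hence $o(f)\ge 0$ and $-o(f)=(-o)(f)\ge 0$ simultaneously, giving both $o\in(\PPP\cap\DD)^{+}$ and $o\in -(\PPP\cap\DD)^{+}$. In particular the restriction to $f\in\PPP\cap\DD$ is fine since those functions are in particular in $\D\cap\PP$.

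For the reverse inclusion, suppose $\mu\in(\PPP\cap\DD)^{+}\cap -(\PPP\cap\DD)^{+}$, i.e. $\mu(f)=0$ for every $f\in\PPP\cap\DD$. By Theorem~\ref{th:summary} applied with $\nu=\mu$, $C=1$ (or directly by Theorem~\ref{th:dualcone}) we know $\mu$ lies in both $M_+(G)+\MM+\OOO$ and its negative; but this bookkeeping is not quite immediate, so instead I would argue via the Bochner-type description. Decompose $\mu=\nu+o$ into its even and odd parts, $\nu=(\mu+\widetilde\mu)/2$, $o=(\mu-\widetilde\mu)/2$, as in the proof of Theorem~\ref{th:dualcone}. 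The odd part $o$ already annihilates all of $\D\cap\PP$ by the first paragraph, so $\nu(f)=\mu(f)=0$ for all $f\in\PPP\cap\DD$. The claim then reduces to showing that an \emph{even} measure $\nu$ annihilating $\PPP\cap\DD$ must be the zero measure; once this is shown, $\mu=o\in\OOO$.

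To prove that step, note that an even measure $\nu$ with $\nu(f)=0$ for all $f\in\PPP\cap\DD$ satisfies in particular $\nu(f)\ge 0$ for all $f\in\PPP\cap\DD$, so by Theorem~\ref{th:dualcone} (since $\nu$ is even, its odd component is $0$) we get $\nu=\sigma_1+\tau_1$ with $\sigma_1\in M_+(G)$, $\tau_1\in\MM$; applying the same to $-\nu$ gives $-\nu=\sigma_2+\tau_2$ with $\sigma_2\in M_+(G)$, $\tau_2\in\MM$. Adding, $\sigma_1+\sigma_2 = -(\tau_1+\tau_2)\in\MM$, i.e. $\sigma_1+\sigma_2$ is simultaneously a nonnegative measure and a measure of positive type. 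Testing against convolution squares $g\star\widetilde g$ with $g\in L^2$ shows a nonnegative measure of positive type $\rho:=\sigma_1+\sigma_2$ must vanish: indeed for any $u\in\CCC_c(G,\RR)$ the function $|u|^2\ge 0$ can be written using Lemma~\ref{l:Kg}-type approximations, or more cleanly, by Proposition~\ref{P:comBochner} $\widehat\rho\ge 0$, while $\rho\ge 0$ forces $\widehat\rho$ to attain its maximum (equal to $\rho(G)=\|\rho\|$) at the trivial character; combined with $\rho\ge0$ and an argument that $\widehat\rho$ being nonnegative and of "negative type" — wait, this needs care.

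Let me restructure the last paragraph: the cleanest route is that $\rho=\sigma_1+\sigma_2\ge 0$ and $-\rho=\tau_1+\tau_2\in\MM$ means $-\rho$ is of positive type, hence by Proposition~\ref{P:comBochner} $\widehat{-\rho}=-\widehat\rho\ge 0$, i.e. $\widehat\rho\le 0$; but $\rho\ge0$ gives $\widehat\rho(\gamma_0)=\rho(G)\ge 0$ at the trivial character $\gamma_0$, and more importantly $\rho\ge0$ together with $\widehat\rho\le 0$ everywhere forces $\widehat\rho\equiv 0$ on $\widehat G$: evaluate $\widehat\rho$ at $\gamma_0$ to get $\|\rho\|=\rho(G)=\widehat\rho(\gamma_0)\le 0$, hence $\rho=0$. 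Then $\sigma_1=\sigma_2=0$ (both nonnegative summing to $0$), so $\nu=\tau_1=-\tau_2\in\MM\cap(-\MM)$. Finally a measure of positive type whose negative is also of positive type has $\widehat\nu\ge 0$ and $\widehat\nu\le 0$, so $\widehat\nu\equiv 0$, whence $\nu=0$ by uniqueness of Fourier-Stieltjes transforms (as used at the end of the proof of (p6)). Therefore $\mu=o\in\OOO$, completing the reverse inclusion. The main obstacle is this last paragraph: making precise that a nonnegative measure which is also "anti-positive-type" vanishes — the point being that evaluating the Fourier-Stieltjes transform at the identity character recovers the total mass, which pins everything down once the transform is known to be $\le 0$.
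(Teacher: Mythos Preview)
Your proof is correct and follows essentially the same route as the paper's: both invoke the decomposition $(\PPP\cap\DD)^+ = M_+(G)+\MM+\OOO$ from Theorem~\ref{th:dualcone}, evaluate Fourier--Stieltjes transforms at the trivial character to force the nonnegative components to vanish (via $\rho(G)=\widehat{\rho}(\gamma_0)$), and then use $\MM\cap(-\MM)=\{0\}$ through Proposition~\ref{P:comBochner} and uniqueness of the Fourier transform. The only cosmetic difference is that you first pass to the even part of $\mu$ while the paper carries the odd pieces through the whole argument; your claim that the even measure $\nu$ admits a decomposition with zero odd component is correct but deserves one line of justification (symmetrize any decomposition $\nu=\sigma+\tau+o$: the even part is $(\sigma+\widetilde{\sigma})/2+\tau$, since $\tau\in\MM$ is already even by (p6) and $o$ is odd).
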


\begin{proof} A bounded odd measure is orthogonal to $\mathcal{P}_0\cap \DD$, whence it belongs to both cones $\mathcal{P}_0\cap \DD$ and $-(\mathcal{P}_0\cap \DD)$. Therefore, $\OOO$ belongs to their intersection, too.

Conversely, let $\si$ be a bounded measure belonging to the intersection of these two cones. Taking $\sigma =\omega + \tau+ o$ to be any decomposition of the measure $\sigma$ as an element of  $M_+(G) + \MM + \OOO$, furnished by Theorem \ref{th:dualcone}, we get in view of $\int_G d o=0$ that
$$
\widehat{\sigma}(0)=\widehat{\omega}(0)+\widehat{\tau}(0)+\widehat{o}(0)=
\int_G d\omega + \int_G d\tau \ge 0,
$$
as for $\omega \geq 0$ we clearly have $\ft{\omega} (0)\geq 0$, and $\tau \in  \MM$ is equivalent to $\widehat{\tau}(\gamma) \ge 0  ~(\forall \gamma \in \ft{G})$, by Proposition~\ref{P:comBochner}.

In a similar fashion, we can decompose $-\sigma=\alpha+\beta+\rho$ with some $\alpha \in M_+(G)$, $\beta \in \MM$ and $\rho \in \OOO$. This yields $-\widehat{\sigma}(0)\geq 0$ which entails that $\widehat{\sigma}(0)=0$. Therefore, taking into account that both $\ft{\omega}(0) \ge 0$ and $\ft{\tau}(0)\ge 0$, we get from $0=\ft{\sigma}(0)=\ft{\omega}(0)+\ft{\tau}(0)$ that $\ft{\omega}(0)=\ft{\tau}(0)=0$. By the same way, we obtain $\ft{\alpha}(0)=\ft{\beta}(0)=0$. However, $\ft{\omega}(0)=\omega(G)$ means that $\omega = \bf{0}$ in view of our a priori knowledge of $\omega \in M_+(G)$. Similarly, we also conclude that $\alpha = \bf{0}$.

Thus $\sigma=\tau+o=-\beta-\rho$ where $\tau,\beta$ are (real) bounded measures of positive type. It follows that $\tau+\beta = -o-\rho \in \OOO$ is an odd measure, while $\tau+\beta \in \mathcal{M}$. However, from the second part of Proposition \ref{pr:realrestriction} it follows that the real valued measure $\tau+\beta$ of positive type must be even. As the even measure $\tau+\beta$ can be equal to the  odd measure $-o-\rho$ only if both sides are ${\bf 0}$, we conclude that $\tau+\beta={\bf 0}$.

To conclude the proof, it remains to see why both $\tau$ and $\beta$ must be ${\bf 0}$. Taking Fourier transforms, we find $\ft{\tau} , \ft{\beta}\ge 0$
according to Proposition~\ref{P:comBochner},
while
their sum is $\widehat{\tau}+\widehat{\beta}=\widehat{\bf 0}\equiv 0$. So, indeed, $\ft{\tau}\equiv 0, \ft{\beta}\equiv 0$. As above, by the uniqueness of Fourier transform it follows that $\tau={\bf 0}$ and $\beta={\bf 0}$.
Therefore, $\sigma= \omega+\tau+o = {\bf 0} + {\bf 0} +o$, whence $\sigma$ is an odd measure, as claimed.
\end{proof}

Next we collect certain properties of the admissible constants $C$.

\begin{proposition} \label{p:Amn}
For arbitrary $\mu, \nu \in {M}(G)$ the set
\[
A(\mu,\nu):=\{ C~:~ C \mu - \nu \in (\mathcal{P}_0\cap \DD)^{+} \}
\]
possesses the following properties.
\begin{itemize}
    \item[(i)] $A(\mu,\nu)$ is a closed subinterval of $\mathbb{R}$;
    \item[(ii)] $0\in A(\mu,\nu)\iff \int_G f d\nu \leq 0\quad (\forall f \in \mathcal{P}_0\cap \DD)\iff \nu \in -(\mathcal{P}_0\cap \DD)^{+}$;
    \item[(iii)] $A(\mu,-\nu)= A(-\mu,\nu) = -A(\mu,\nu)$.
    \item[(iv)] If $A(\mu,\nu)\neq \emptyset$, then we have $ \mu \in (\mathcal{P}_0\cap \DD)^{+}$ $\iff$  $\sup A(\mu,\nu)=+\infty$;
    \item[(v)] $A(\mu,\nu)=\mathbb{R}$ $\iff$ $\nu \in -(\mathcal{P}_0\cap \DD)^{+}$ and $\mu \in \OOO$;
    \item[(vi)] If $\mu \not\in \mathcal{P}_0^{+} + \DD^{+}$ and $ \mu \not\in -(\mathcal{P}_0\cap \DD)^{+}$, then $A(\mu,\nu)$ is bounded.
\end{itemize}
\end{proposition}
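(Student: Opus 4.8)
The plan is to work throughout with the closed convex cone $K:=(\PPP\cap\DD)^+=M_+(G)+\MM+\OOO$, the identification being Theorem~\ref{th:dualcone} (and $\mathcal{P}_0^++\DD^+=K$ is Corollary~\ref{l:intersection}), and to regard $A(\mu,\nu)$ as the preimage of $K$ under the affine map $\iota\colon\RR\to M(G)$, $\iota(C):=C\mu-\nu$. Since $\iota$ is norm-continuous and $K$ is a norm-closed convex cone, part~(i) is immediate: $A(\mu,\nu)=\iota^{-1}(K)$ is closed and convex, hence a closed subinterval of $\RR$ (it may be empty, a half-line, or all of $\RR$). Part~(ii) is unwinding of definitions: $0\in A(\mu,\nu)$ means $-\nu\in K$, which by the defining property of a dual cone is the same as $(-\nu)(f)\ge 0$ for all $f\in\PPP\cap\DD$, i.e. $\int_G f\,d\nu\le 0$ on that class, i.e. $\nu\in -K=-(\PPP\cap\DD)^+$. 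Part~(iii) is again a substitution: for every $C$, $C\in A(-\mu,\nu)\iff -C\mu-\nu\in K\iff -C\in A(\mu,\nu)$, giving $A(-\mu,\nu)=-A(\mu,\nu)$, and the remaining identity is obtained the same way.

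The engine for (iv)--(vi) is a rescaling (recession) argument using that $K$ is a \emph{closed cone}. If $C_n\in A(\mu,\nu)$ with $C_n\to+\infty$, then $C_n>0$ eventually and $(C_n\mu-\nu)/C_n=\mu-\nu/C_n\in K$; letting $n\to\infty$ and using norm-closedness of $K$ gives $\mu\in K$. Symmetrically, $C_n\to-\infty$ in $A(\mu,\nu)$ forces $-\mu\in K$. Conversely, if $\mu\in K$ and $A(\mu,\nu)\ne\emptyset$, pick $C_0\in A(\mu,\nu)$; then for all $t\ge 0$ we have $(C_0+t)\mu-\nu=(C_0\mu-\nu)+t\mu\in K+K\subseteq K$, so $[C_0,\infty)\subseteq A(\mu,\nu)$ and $\sup A(\mu,\nu)=+\infty$. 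This settles~(iv). For~(vi), the assumptions $\mu\notin\mathcal{P}_0^++\DD^+=K$ and $\mu\notin-(\PPP\cap\DD)^+=-K$ together with the dichotomy above forbid $A(\mu,\nu)$ from being unbounded above (that would yield $\mu\in K$) or unbounded below (that would yield $-\mu\in K$); since $A(\mu,\nu)$ is an interval by~(i), it is therefore bounded (empty being allowed).

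For~(v): if $A(\mu,\nu)=\RR$, applying the recession argument with $C_n\to+\infty$ and with $C_n\to-\infty$ yields both $\mu\in K$ and $-\mu\in K$, so $\mu\in K\cap(-K)=\OOO$ by Proposition~\ref{P:1}, while $0\in A(\mu,\nu)$ gives $\nu\in -K$ as in~(ii). Conversely, if $\mu\in\OOO$ — a linear subspace contained in $K$ — and $-\nu\in K$, then $C\mu\in\OOO\subseteq K$ for every $C\in\RR$, whence $C\mu-\nu=C\mu+(-\nu)\in K+K\subseteq K$, so $A(\mu,\nu)=\RR$. I do not anticipate a genuine obstacle here: the only non-formal ingredient is Proposition~\ref{P:1} (used in~(v) and standing behind~(vi)), and the rest is the interplay of ``$K$ is a closed convex cone'' with elementary limits. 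The one point worth care is the topology — it is cleanest to run every limiting argument in the total-variation norm, using that the dual cone $K$ is weak-$*$ closed and hence norm-closed.
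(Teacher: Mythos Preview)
Your argument is correct and follows the same route as the paper: parts (i)--(iii) are dispatched as direct consequences of the definition and the cone structure, and (iv)--(vi) via the identical rescaling/recession trick (divide $C_n\mu-\nu$ by $|C_n|$, pass to the limit in the closed cone $K$, and invoke Proposition~\ref{P:1}). The only cosmetic difference is that you take limits in the total-variation norm of $M(G)$ whereas the paper argues pointwise against each test function $f\in\PPP\cap\DD$; these are equivalent.
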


\begin{proof}
Properties (i)--(iii) are consequences of the very definition of $A(\mu,\nu)$, and the description of the dual cone $(\mathcal{P}_0\cap \DD)^{+}$ in Theorem~\ref{th:dualcone}).

As for (iv), assume first $A(\mu,\nu)\neq \emptyset$ and $\sup A(\mu,\nu)=\infty$, and consider with any $C>0$ and $C\in A(\mu,\nu)$ the inequality
\[
0\leq \int_{G} f d[C\mu-\nu] = C \int_{G} f d\mu - \int_{G} f d\nu \qquad (f\in \mathcal{P}_0\cap \DD),
\]
directly implying that for any given $f \in \PP_0 \cap \DD$
\begin{equation}\label{eq:fdmudnu}
\frac{1}{C} \int_G f d\nu \leq  \int_G f d\mu \quad (f\in \mathcal{P}_0\cap \DD).
\end{equation}
Taking $C\to \infty$ on the left hand side (which is possible for $\sup A(\mu,\nu)=\infty$ was assumed), gives us $\int_G fd\mu \ge 0$, so that $\mu \in (\mathcal{P}_0\cap \DD)^{+}$. This verifies the necessity.

For the sufficiency, we note that if $\mu \in (\mathcal{P}_0 \cap \DD)^{+}$, then so is $\kappa \mu$ with any $\kappa > 0$. Hence $C\in A(\mu,\nu)(\ne \emptyset)$ implies $C+\kappa \in A(\mu,\nu)$ from which $[C,\infty)\subset A(\mu,\nu)$ and $\sup A(\mu,\nu)=\infty$ follows.

To verify (v), assume first that $C>0$ and $C\in A(\mu,\nu)$. Then we have for any given $f\in \mathcal{P}_0\cap \DD$ the validity of \eqref{eq:fdmudnu}, and thus taking the limit $C\to +\infty$ yields again that $\int_G f d\mu \geq 0$ holds for all $f\in \mathcal{P}_0\cap \DD$. Similarly, taking negative values of $C \in A(\mu,\nu)$ into account, we conclude that $\int_G f d\mu \leq 0$ is satisfied for all $f\in \mathcal{P}_0\cap \DD$. Therefore, by Proposition~\ref{P:1}, we have $\mu\in \OOO$. Further $0 \in A(\mu,\nu)$ means $-\nu \in (\mathcal{P}_0\cap \DD)^{+}$, that is, $\nu \in -(\mathcal{P}_0\cap \DD)^{+}$. The converse is obvious.

Property (vi) follows from (iii) and (iv), noting that in case $A(\mu,\nu)=\emptyset$ we have nothing to prove.
\end{proof}

\section{The case of atomic measures}\label{sec:atomic}

When both $\mu$ and $\nu$ are atomic measures, in our problem we can make further specializations regarding the occurring measures in the representation of $C\mu-\nu$.

\begin{theorem}\label{th:atomic}
The atomic measure $\sigma=\sigma_{\rm at}$ lies in $(\mathcal{P}_0\cap \DD)^{+}$ if and only if there exist
\begin{itemize}
\item[(i)] a nonnegative atomic even measure $\omega_{\rm at}\ge 0$;
\item[(ii)] an atomic real measure of positive type $\rho_{\rm at}\in \mathcal{M}$;
\item[(iii)] an atomic real odd measure $o_{\rm at}$
\end{itemize}
such that $\sigma_{\rm at}=\omega_{\rm at}+\rho_{\rm at}+o_{\rm at}$.
\end{theorem}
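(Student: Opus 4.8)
The plan is to reduce everything to the already-established non-atomic description, Theorem~\ref{th:dualcone}, and then to check that the atomic--continuous splitting of a bounded measure respects each of the three summands $M_+(G)$, $\MM$, $\OOO$. The \emph{if} part is immediate: if $\sigma_{\rm at}=\omega_{\rm at}+\rho_{\rm at}+o_{\rm at}$ as in (i)--(iii), then already $\sigma_{\rm at}\in M_+(G)+\MM+\OOO=(\PPP\cap\DD)^{+}$ by Theorem~\ref{th:dualcone}, no atomicity or evenness being used. For the converse, write $\sigma:=\sigma_{\rm at}=\sigma_e+\sigma_o$ with $\sigma_e=(\sigma+\widetilde\sigma)/2$, $\sigma_o=(\sigma-\widetilde\sigma)/2$; both are atomic (reflection preserves atomicity), and since every $f\in\PPP\cap\DD$ is even (being real valued and positive definite) one has $\sigma_e(f)=\sigma(f)\ge 0$, so $\sigma_e\in(\PPP\cap\DD)^{+}$, while the odd atomic measure $\sigma_o$ may be taken as $o_{\rm at}$. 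Thus we may assume $\sigma$ is even and atomic. Applying Theorem~\ref{th:dualcone} write $\sigma=\omega_0+\tau_0+o_0$ with $\omega_0\in M_+(G)$, $\tau_0\in\MM$, $o_0\in\OOO$; taking even parts and using $\widetilde{\omega_0}\ge 0$, $\widetilde{\tau_0}=\tau_0$ (property (p6)) and $\widetilde{o_0}=-o_0$ gives $\sigma=\omega+\tau_0$ with $\omega:=(\omega_0+\widetilde{\omega_0})/2\in M_+(G)$ even. Since the atomic part commutes with reflection and $\sigma=\sigma_d$, passing to atomic parts yields $\sigma=\omega_d+(\tau_0)_d$, where $\omega_d\in M_+(G)$ is atomic and even (the restriction of a nonnegative even measure to its atoms) and serves as $\omega_{\rm at}$. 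So everything comes down to the following claim, after which $\rho_{\rm at}:=(\tau_0)_d$ (which is real since $\tau_0$ is) finishes the proof.

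\textbf{Key claim.} \emph{The atomic part $\tau_d$ of any $\tau\in\MM$ again belongs to $\MM$.} To prove it I would first show that the mass function $p(z):=\tau(\{z\})$ is a positive definite function on $G$. By Proposition~\ref{P:comBochner} we have $\widehat\tau\ge 0$. Fixing a F\o lner net $(K_\alpha)$ in the (amenable) LCA group $\widehat G$, one obtains by dominated convergence -- using the bound $|\,\lambda_{\widehat G}(K_\alpha)^{-1}\!\int_{K_\alpha}\gamma(z-y)\,d\lambda_{\widehat G}(\gamma)\,|\le 1$ and the elementary fact that F\o lner averages of a nontrivial character of $\widehat G$ tend to $0$ -- the Wiener-type identity
\[
\tau(\{z\})=\lim_\alpha\frac{1}{\lambda_{\widehat G}(K_\alpha)}\int_{K_\alpha}\gamma(z)\,\widehat\tau(\gamma)\,d\lambda_{\widehat G}(\gamma)\qquad(z\in G).
\]
Inserting this and using $\sum_{j,k}c_j\overline{c_k}\gamma(x_j-x_k)=\bigl|\sum_j c_j\gamma(x_j)\bigr|^2\ge 0$ together with $\widehat\tau\ge 0$ gives $\sum_{j,k}c_j\overline{c_k}\,p(x_j-x_k)\ge 0$, i.e.\ $p\gg 0$. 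Since positive definiteness is a pointwise condition, $p$ is positive definite on the discrete group $G_{\mathrm d}$, and $p\in\ell^1(G_{\mathrm d})$ because $\sum_z|p(z)|\le\|\tau\|$; hence, by the Bochner--Weil--Povzner--Raikov theorem on $G_{\mathrm d}$ (together with the fact that a measure on the compact group $\widehat{G_{\mathrm d}}$ whose inverse Fourier transform is summable has a nonnegative continuous density), $\widehat p$ is a nonnegative continuous function on $\widehat{G_{\mathrm d}}$. Finally, for any continuous positive definite $f$ on $G$ write $f=\widecheck{\sigma_f}$ with $\sigma_f\in M_+(\widehat G)$; then, interchanging a uniformly dominated sum with a finite integral,
\[
\tau_d(f)=\sum_z p(z)f(z)=\int_{\widehat G}\Bigl(\sum_z p(z)\gamma(z)\Bigr)d\sigma_f(\gamma)=\int_{\widehat G}\widehat p(\overline\gamma)\,d\sigma_f(\gamma)\ge 0.
\]
By Proposition~\ref{P:newBochner} this says exactly that $\tau_d$ is of positive type; being real, $\tau_d\in\MM$, which proves the claim.

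\textbf{Main obstacle.} The only substantial point is the Key claim, that positive typeness is inherited by the atomic part. This is precisely where the theory of almost periodic functions and Eberlein's work enter: the positive definiteness of $z\mapsto\tau(\{z\})$ is the concrete shadow of the fact that $\widehat{\tau_d}$ is the almost periodic component of the weakly almost periodic function $\widehat\tau$, and one could alternatively derive the claim from that structural statement. Everything else is bookkeeping with the evenness/oddness and atomic/continuous decompositions.
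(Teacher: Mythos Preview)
Your overall strategy is sound and your Key Claim is exactly the paper's Lemma~\ref{l:atomicpartpostype}. There is, however, one genuine technical gap in your proof of it: dominated convergence is not a theorem for nets, and a general LCA dual $\widehat G$ need not admit a F{\o}lner \emph{sequence}, so the passage from pointwise convergence of the bracketed kernel to convergence of the $\tau$-integral is unjustified as written. The repair is precisely the device the paper (following Eberlein) uses: replace the F{\o}lner averages by the unique invariant mean $\MEANGH$ on almost periodic functions on $\widehat G$ and invoke the identity $p(z)=\tau(\{z\})=\MEANGH\bigl(\gamma(z)\,\widehat\tau(\gamma)\bigr)$ of \eqref{eq:atomicbymeanvalue}; positivity and linearity of $\MEANGH$ together with $\widehat\tau\ge 0$ then give $\sum_{j,k}c_j\overline{c_k}\,p(x_j-x_k)=\MEANGH\bigl(\bigl|\sum_j c_j\gamma(x_j)\bigr|^{2}\,\widehat\tau(\gamma)\bigr)\ge 0$ exactly as you intend.

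With that fix your argument for the Key Claim is actually more direct than the paper's. The paper deduces $\tau_{\rm at}\in\NNN$ by first proving a separate Lemma~\ref{l:posappartpos} (the almost-periodic summand in a decomposition of a nonnegative bounded function with mean-square-zero remainder is itself nonnegative) and applying it to $\Re\widehat\tau=\Re\widehat{\tau_{\rm at}}+\Re\widehat{\tau_{\rm c}}$. You bypass that lemma entirely: from $p\gg 0$ and $p\in\ell^1(G_{\rm d})$ you obtain $\widehat p\ge 0$ on the Bohr compactification $\widehat{G_{\rm d}}\supset\widehat G$, hence $\widehat{\tau_d}\ge 0$ and $\tau_d\in\MM$ by Proposition~\ref{P:comBochner}. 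The paper's route has the advantage of working verbatim for the real-sense cone $\NNN$ (where only $\Re\widehat\tau\ge 0$ is available and your $p\gg 0$ argument does not go through directly); but since you have already reduced to even $\sigma$ and hence to $\tau_0\in\MM$, that extra generality is unnecessary here. The surrounding even/odd and atomic/continuous bookkeeping is correct, just ordered differently from the paper (you strip off the odd part first, the paper last).
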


We already know the existence of some decompositions. The novelty here is that all the components can be taken atomic, too.

The crux of our proof will be the following possibly well-known fact, which we could not find a reference for, although we find it interesting also in its own right.

\begin{lemma}\label{l:atomicpartpostype} Assume that the measure $\tau \in M(G)$ is of positive type, that is, $\tau \in \MM$ (or is of positive type in the real sense, that is, $\tau \in \NNN$). Then so is the atomic part $\tat$ of $\tau$, too.
\end{lemma}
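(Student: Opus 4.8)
The plan is to pass, via the Bochner-type criterion of Proposition~\ref{P:comBochner}, to the Fourier side, and to prove that the ``atom function'' $h\colon G\to\CC$, $h(x):=\tau(\{x\})$, is positive definite on $G$; from this $\widehat{\tat}\ge 0$ follows, and by Proposition~\ref{P:comBochner} this is equivalent to $\tat\in\MM$.

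First I would reduce the real-sense case to the complex one. If $\tau\in\NNN$, then by the identification $\NNN=\NS$ (see the proof of Corollary~\ref{C:realsense} and Remark~\ref{R:MNO}) the measure $\tau+\widetilde\tau$ is of positive type; since forming the atomic part is additive and commutes with $\mu\mapsto\widetilde\mu$, we have $(\tau+\widetilde\tau)_{\rm at}=\tat+\widetilde{\tat}$, so the $\MM$-case applied to $\tau+\widetilde\tau$ gives $\tat+\widetilde{\tat}\in\MM$, that is, $\tat\in\NS=\NNN$. Hence it suffices to treat $\tau\in\MM$. In that case $\tau$ is even by (p6), so $h(-x)=h(x)$; moreover $\sum_x|h(x)|=\|\tat\|\le\|\tau\|<\infty$, so $\tat\in M(G)$ and $\widehat{\tat}(\gamma)=\sum_x h(x)\overline{\gamma(x)}$ with absolute convergence.

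The heart of the argument is to show that $h$ is positive definite: given $x_1,\dots,x_n\in G$ and $c_1,\dots,c_n\in\CC$, one must establish $\sum_{j,k}c_j\overline{c_k}\,h(x_j-x_k)\ge 0$. I would test the positivity of $\tau$ against the $L^2$-function
\[
g_W:=\lambda(W)^{-1/2}\sum_{j=1}^{n} c_j\,\chi_{x_j+W},
\]
where $W$ is a relatively compact symmetric open neighbourhood of $0$: Lemma~\ref{l:measureestimate} (with the closed set $F=[0,\infty)$) yields $\tau(g_W\star\widetilde{g_W})\ge 0$, while a short computation gives $g_W\star\widetilde{g_W}=\lambda(W)^{-1}\sum_{j,k}c_j\overline{c_k}\,\Lambda_W(\cdot-(x_j-x_k))$ with $\Lambda_W:=\chi_W\star\chi_W\ge 0$, $\supp\Lambda_W\subseteq W+W$, $0\le\Lambda_W\le\Lambda_W(0)=\lambda(W)$. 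Integrating against $\tau$ and isolating, for each $p=x_j-x_k$, the contribution of the atom at $p$, the remaining part is bounded in modulus by $\lambda(W)\,|\tau|\bigl((p+W+W)\setminus\{p\}\bigr)$, which tends to $0$ as $W$ shrinks by outer regularity of $|\tau|$ (the sets $p+W+W$ form a neighbourhood base at $p$). Dividing by $\lambda(W)$ and letting $W$ shrink then gives $0\le\sum_{j,k}c_j\overline{c_k}\,\tau(\{x_j-x_k\})=\sum_{j,k}c_j\overline{c_k}\,h(x_j-x_k)$, as claimed. To finish, I would fix $\gamma\in\widehat G$ and a F{\o}lner net $(F_\alpha)$ of finite subsets of the (amenable) discrete group $G$, apply positive definiteness of $h$ to the points of $F_\alpha$ with weights $\overline{\gamma(g)}$, and divide by $|F_\alpha|$: since $|F_\alpha\cap(F_\alpha+w)|/|F_\alpha|\to 1$, is $\le 1$, and $h\in\ell^1(G)$, dominated convergence yields $\widehat{\tat}(\gamma)=\sum_w\overline{\gamma(w)}h(w)\ge 0$; hence $\widehat{\tat}\ge 0$ and $\tat\in\MM$. (Equivalently, one may invoke the fact that a positive definite $\ell^1$-function on a discrete abelian group has nonnegative Fourier transform.)

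The main obstacle is precisely this localization step: the condition defining $\MM$ is phrased through continuous compactly supported weights, yet the conclusion is about the atoms, which form a Haar-null set. The device is to test against the $L^2$-functions $g_W$ concentrated near the atoms — legitimate only because of the $\CCC_c\to L^2$ extension in Lemma~\ref{l:measureestimate} — and the delicate point is to check that after renormalizing by $\lambda(W)$ the ``leakage'' into the continuous part of $\tau$ and into neighbouring atoms is negligible, for which the outer regularity of $|\tau|$ and the $\ell^1$-summability of the atom masses are exactly what is required.
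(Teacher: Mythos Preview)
Your proof is correct but follows a genuinely different route from the paper's. The paper works entirely on the dual side via Eberlein's theory of almost periodic functions: it decomposes $\widehat{\tau}=\widehat{\tau_{\rm at}}+\widehat{\tau_{\rm c}}$, invokes Eberlein's result that $\widehat{\tau_{\rm at}}$ is almost periodic while $\MEANGH(|\widehat{\tau_{\rm c}}|^2)=0$, and then proves a separate lemma (Lemma~\ref{l:posappartpos}) to the effect that in any decomposition of a nonnegative bounded continuous function as ``almost periodic plus mean-square-null'', the almost periodic part is itself nonnegative; applying this to $\Re\widehat{\tau}\ge 0$ yields $\Re\widehat{\tau_{\rm at}}\ge 0$. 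Your argument, by contrast, never leaves $G$: you localize the positive-type condition near the atoms by testing against $g_W\star\widetilde{g_W}$ (legitimized by Lemma~\ref{l:measureestimate}), let $W$ shrink using outer regularity to obtain positive definiteness of the atom function $h(x)=\tau(\{x\})$, and then recover $\widehat{\tau_{\rm at}}\ge 0$ by F\o lner averaging on the underlying discrete group. Your approach is more elementary in that it avoids the Eberlein machinery and the auxiliary Lemma~\ref{l:posappartpos}, trading them for the outer-regularity limit and amenability of discrete abelian groups; the paper's approach, on the other hand, exhibits the structural fact that $\widehat{\tau_{\rm at}}$ is precisely the almost periodic component of $\widehat{\tau}$, which is of independent interest and is what drives the whole of \S\ref{sec:atomic}.
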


Before presenting the proof of the lemma, let us see how it leads to the conclusion of Theorem~\ref{th:atomic}.

\begin{proof}[Proof of Theorem~\ref{th:atomic}]
Let $\s=\om+\tau$ be any decomposition of $\s$ as an element of $(\mathcal{P}_0 \cap \DD)^{+} = M_{+}(G)+\NNN$. 
Let us decompose the occurring measures to their atomic and continuous parts as
$$
\s=\sat,\quad \omega=\oat+\om_{\rm c},\quad \tau=\tat+\tau_{\rm c}.
$$
Apparently then $\sigma=\sigma_{\rm at}=\omega_{\rm at}+\tau_{\rm at}$.
Being the atomic part of a nonnegative measure, $\om_{\rm at} \ge 0$. For the atomic part of $\tau$, we can apply Lemma~\ref{l:atomicpartpostype}: as $\tau \in \NNN$, we find $\tat \in \NNN$ as well.

To conclude the proof, we refer to the decomposition $\NNN=\MM+\OOO$, see Remark~\ref{R:MNO}. Let $\tat=\rho_{\rm at} + o_{\rm at}$ where $\rho_{\rm at}:=(\tat+\widetilde{\tat})/2$ and $o_{\rm at}:=(\tat-\widetilde{\tat})/2$ are the unique even and odd parts of $\tat$, respectively, both remaining atomic together with $\tat$.
Then $\s=\s_{\rm at}=\omega_{\rm at}+\rho_{\rm at} + o_{\rm at}$
where $\omega_{\rm at} \geq 0$, $o_{\rm at} \in \OOO$ and $\rho_{\rm at} \in \NNN$ by construction, moreover $\rho_{\rm at}$ being even, yet we have $\rho_{\rm at} \in \MM$.
\end{proof}

The forthcoming discussion will eventually lead to the proof of Lemma \ref{l:atomicpartpostype}. However, for that we need some preliminaries on almost periodic functions and mean value functionals.

Let $f$ be a function on $G$. For any $g\in G$ denote by $\mathcal{T}_g$ the $g$-translate of $f$ defined by $\mathcal{T}_g f(x):=f(g+x)$. The function $f$ is said to be \textit{almost periodic} if for all $\ve>0$ there exists a finite set $\{g_j\,:~ j=1,\dots,n\}$ such that the translates $\{\mathcal{T}_{g_j} f   ~:~ j=1,\dots,n\}$ constitute an $\varepsilon$-net among all the translates $\{\mathcal{T}_{g} f   ~:~ g\in G\}$ of $f$ in the uniform norm metric.

We will use the fact that a translation invariant mean value functional $\MEAN:=\MEANG$ exists on the set of all almost periodic functions on any locally compact group $G$ \cite[(18.8) Theorem]{HewittRossI}. Hence also on $\ft{G}$ which will be denoted by $\MEANGH$. Note that this mean value operation is a bounded positive linear functional on the set of all almost periodic functions and is normalized so that $\MEAN({\bf 1})=1$. Furthermore, this mean value functional is unique \cite[(18.9) Theorem]{HewittRossI}. Also, it vanishes on all characters $\chi$ not identically one (for any $z\in G$ with $\chi(z)\ne 1$ translation invariance entails $\MEAN(\chi(x))=\MEAN(\chi(z+x))=\chi(z)\MEAN(\chi(x)$) and thus $(1-\chi(z))\MEAN(\chi)=0$).


Next we recall some results of Eberlein \cite{Eberlein1} which will be needed.
Let $\s:=\sat$ be an atomic bounded Radon measure. Then $\sat$ is of the form $\sat=\sum_{j=1}^{\infty} a_j \delta_{x_j}$ (with $x_j$ running over a countable subset in $G$, and $\de_x$ denoting the Dirac measure concentrated at $x$), so that its Fourier transform is
\[
\widehat{\s}(\gamma)=\widehat{\s_{\rm at}}(\gamma)=\int_{{G}} \overline{\gamma}(x) d {\sat}(x)=\sum_{j=1}^{\infty} a_j \overline{\gamma}(x_j).
\]
Note that $\|\s_{\rm at}\|=\sum_{j=1}^\infty |a_j| <\infty$ implies that the series expansion of $\widehat{\s_{\rm at}}$ is normally (and thus absolutely and uniformly) convergent and thus defines a continuous function. Therefore, $\widehat{\s_{\rm at}}\in \CCC_b(\ft{G},\mathbb{C})$. Furthermore, it is necessarily an almost periodic function, see \cite[(18.3) Theorem (iv)]{HewittRossI}. This statement can be reversed due to the following result of Eberlein \cite[Theorem 3]{Eberlein2}.

\begin{lemma}[Eberlein]\label{l:atomicap} The Fourier transform of a bounded Radon measure is almost periodic if and only if the measure itself is a bounded atomic measure.
\end{lemma}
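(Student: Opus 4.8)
The plan is to prove the non‑trivial direction of Lemma~\ref{l:atomicap}: if $\mu\in M(G,\CC)$ is bounded and $\ft\mu$ is almost periodic on $\ft G$, then $\mu$ is atomic. (The converse is already recorded in the paragraph preceding the lemma, cf. \cite[(18.3) Theorem (iv)]{HewittRossI}.) Everything will rest on the single identity
\begin{equation}\label{eq:meanatom}
\MEANGH\bigl(\gamma\mapsto\gamma(x)\,\ft\mu(\gamma)\bigr)=\mu(\{x\})\qquad(x\in G),
\end{equation}
valid whenever $\ft\mu$ is almost periodic. The left‑hand side makes sense: by Pontryagin--van Kampen duality the map $\gamma\mapsto\gamma(x)$ is a character of $\ft G$, hence almost periodic, so its product with the almost periodic $\ft\mu$ is again almost periodic and $\MEANGH$ may be applied to it.

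First I would check \eqref{eq:meanatom} for atomic $\mu=\sum_j a_j\delta_{x_j}$ with $\sum_j|a_j|<\infty$: there $\gamma(x)\ft\mu(\gamma)=\sum_j a_j\gamma(x-x_j)$ converges uniformly, $\MEANGH$ commutes with the sum, and $\MEANGH$ annihilates each character $\gamma\mapsto\gamma(x-x_j)$ with $x\ne x_j$ while returning $1$ when $x=x_j$, so the series collapses to $\sum_{x_j=x}a_j=\mu(\{x\})$. For a general $\mu$ I would write $\gamma(x)\ft\mu(\gamma)=\int_G\gamma(x-t)\,d\mu(t)$, fix a F\o lner net $(K_\alpha)$ in $\ft G$ (available since every LCA group is amenable) and use that on almost periodic functions the averages $\lambda_{\ft G}(K_\alpha)^{-1}\!\int_{K_\alpha}(\cdot)$ converge to the unique invariant mean $\MEANGH$. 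Fubini's theorem --- legitimate because each $K_\alpha$ has finite Haar measure, $\mu$ is bounded and $|\gamma(x-t)|\equiv 1$ --- rewrites the average of $\gamma(x)\ft\mu(\gamma)$ over $K_\alpha$ as $\int_G\bigl(\lambda_{\ft G}(K_\alpha)^{-1}\!\int_{K_\alpha}\gamma(x-t)\,d\gamma\bigr)\,d\mu(t)$; the inner average is bounded by $1$ and tends, for each fixed $t$, to $\MEANGH\bigl(\gamma\mapsto\gamma(x-t)\bigr)$, which equals $1$ if $t=x$ and $0$ otherwise, so dominated convergence against $|\mu|$ yields \eqref{eq:meanatom}. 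I expect this passage --- making an honest interchange of the invariant mean on $\ft G$ with integration against $\mu$ on $G$ --- to be the only genuinely technical point; it is in essence Wiener's classical formula recovering the mass of an atom from the Fourier transform.

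Granting \eqref{eq:meanatom}, the lemma follows quickly. The numbers $a(x):=\mu(\{x\})$ are the atoms of the bounded measure $\mu$, hence $\sum_x|a(x)|\le\|\mu\|<\infty$ and $\mu_{\rm at}:=\sum_x a(x)\delta_x$ is a bounded atomic measure; by the easy direction of the lemma, applied to $\mu_{\rm at}$, the transform $\ft{\mu_{\rm at}}$ is almost periodic, whence so is $\ft\nu:=\ft\mu-\ft{\mu_{\rm at}}$, where $\nu:=\mu-\mu_{\rm at}$ is the continuous part of $\mu$. Since $\nu$ has no atoms, \eqref{eq:meanatom} applied to $\nu$ gives $\MEANGH\bigl(\gamma\mapsto\gamma(x)\ft\nu(\gamma)\bigr)=\nu(\{x\})=0$ for every $x\in G$; as $x$ runs over $G$ these are, up to the index flip $x\mapsto -x$, precisely all the Bohr--Fourier coefficients of the almost periodic function $\ft\nu$. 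Their vanishing forces $\ft\nu\equiv 0$: by the approximation theorem for almost periodic functions the spectrum of $\ft\nu$ is empty, so $\ft\nu$ is a uniform limit of the zero trigonometric polynomial (equivalently, by Parseval, $\MEANGH(|\ft\nu|^2)=\sum_x|\nu(\{x\})|^2=0$ and $\MEANGH$ is faithful on nonnegative almost periodic functions). Finally $\ft\nu\equiv 0$ entails $\nu=0$ by the uniqueness theorem for Fourier--Stieltjes transforms \cite[1.7.3 Theorem]{rudin:groups}, so $\mu=\mu_{\rm at}$ is atomic, completing the proof.
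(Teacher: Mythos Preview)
Your argument is correct and follows the same skeleton as the paper's: decompose $\mu=\mu_{\rm at}+\nu$, observe that $\widehat{\nu}=\widehat{\mu}-\widehat{\mu_{\rm at}}$ is almost periodic, and conclude $\widehat{\nu}\equiv 0$ so that $\nu=0$. The paper reaches the last step by \emph{citing} Eberlein's mean-square Parseval identity (Lemma~\ref{l:meanvalueParseval}), which gives $\MEANGH(|\widehat{\nu}|^2)=0$ for continuous $\nu$, and then invoking the faithfulness of the mean on nonnegative almost periodic functions; you instead \emph{derive} the atom-recovery formula $\MEANGH(\gamma(x)\widehat{\mu}(\gamma))=\mu(\{x\})$ from scratch via F{\o}lner averaging and Fubini, and kill $\widehat{\nu}$ by showing all its Bohr--Fourier coefficients vanish (you even note the Parseval route as an equivalent finish). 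So your route is more self-contained, while the paper's is shorter because it outsources the hard identity to \cite{Eberlein1,Eberlein2}.

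One caution on the only step you yourself flag as delicate: the dominated (bounded) convergence theorem can fail along general nets, so the passage ``$\int_G I_\alpha(t)\,d\mu(t)\to\mu(\{x\})$'' is not automatic when $(K_\alpha)$ is a genuine net rather than a sequence. For $\widehat{G}$ that is $\sigma$-compact you may take a F{\o}lner \emph{sequence} and the issue disappears; in general, the clean fix is to realise $\MEANGH$ as integration against Haar measure on the Bohr compactification $b\widehat{G}$, where Fubini for the finite measures $|\mu|$ and $\lambda_{b\widehat{G}}$ applies directly and yields \eqref{eq:meanatom} without any limiting argument.
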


We postpone the proof of the nontrivial part of the lemma, continuing the discussion first.
We have seen that the atomic masses determine a convergent series representation of the Fourier transform. In what follows, we explain that conversely, the mass of the atomic component $\sigma(\{x_0\})=\sigma_{\rm at}(\{x_0\})$ of $\sigma$ at the arbitrary but fixed point $x_0\in G$ (and thus the whole atomic measure itself) can be reconstructed using the mean value functional as
\begin{equation}\label{eq:atomicbymeanvalue}
\s_{\rm at}(\{x_0\})=\sigma(\{x_0\})=\MEANGH \left( \gamma(x_0)\widehat{\s}(\gamma) \right).
\end{equation}
First for any $\s \in M(G)$ the mean value functional $\MEANGH$ can be applied to $\gamma(x_0)\widehat{\s}(\gamma)$, as decomposing $\s=\s_{+}-\s_{-}$ with both $\s_{+}, \s_{-}\in M_+(G)$, we find that $\ft{\s_{+}},~ \ft{\s_{-}}$, together with their products with the positive definite character $\gamma \mapsto\gamma(x_0)$ on $\ft{G}$, are continuous positive definite functions on $\ft{G}$, to which $\MEANGH$ is certainly extended.

Recall that a measure $\mu$ is called \emph{continuous}, if for all singletons $\si(\{x_0\})=0$. Clearly, taking $\sat:=\sum_{x_j} \sigma(\{x_j\}) \delta_{x_j}$ with all the points $x_j$ having nonzero mass is an atomic measure, and the left over remainder $\s-\sat$ is a continuous measure. It remains to establish that the measure of a singleton $\{x_0\}$ equals to the mean value in \eqref{eq:atomicbymeanvalue}. However, let us point out that Eberlein has proved the following even stronger result (cf. \cite[Theorem 15.2]{Eberlein1} and \cite[Theorem 1]{Eberlein2}), as well.

\begin{lemma}[Eberlein]\label{l:meanvalueParseval} If the bounded Radon measure $\mu\in M(G)$ has the decomposition $\mu=\mu_{\rm at}+\nu$, where $\mu_{\rm at}=\sum_{j=1}^\infty a_j \de_{x_j}$ is the atomic component of $\mu$ and $\nu$ is the continuous component of $\mu$ then $\MEANGH(|\widehat{\mu}|^2)=\MEANGH(|\widehat{\mu_{\rm at}})|^2)=\sum_{j=1}^\infty |a_j|^2$ and $\MEANGH(|\widehat{\nu}|^2)=0$.
\end{lemma}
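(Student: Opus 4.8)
The plan is to reduce all three identities to the single fact
\begin{equation}\label{eq:meanrecovers}
\MEANGH(\widehat{\tau})=\tau(\{0\})\qquad(\tau\in M(G)),
\end{equation}
and to prove \eqref{eq:meanrecovers} following the classical argument of Wiener, by computing the mean on $\ft{G}$ through an averaging process. First, since the Fourier--Stieltjes transform is multiplicative under convolution of bounded measures and $\overline{\widehat{\mu}}=\widehat{\widetilde{\mu}}$, one has $|\widehat{\mu}|^2=\widehat{\mu}\,\widehat{\widetilde{\mu}}=\widehat{\mu\star\widetilde{\mu}}$, and likewise $|\widehat{\mu_{\rm at}}|^2=\widehat{\mu_{\rm at}\star\widetilde{\mu_{\rm at}}}$ and $|\widehat{\nu}|^2=\widehat{\nu\star\widetilde{\nu}}$. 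A direct computation gives $\mu_{\rm at}\star\widetilde{\mu_{\rm at}}=\sum_{j,k}a_j\overline{a_k}\,\delta_{x_j-x_k}$ (a normally convergent series of Dirac masses), whose mass at $0$ is $\sum_j|a_j|^2$ since the points $x_j$ are pairwise distinct. Splitting $\mu=\mu_{\rm at}+\nu$, the three remaining summands of $\mu\star\widetilde{\mu}$ each contain a continuous factor, hence carry no atoms at all, because $(\alpha\star\beta)(\{z\})=\int_G\alpha(\{z-y\})\,d\beta(y)=0$ as soon as $\alpha$ is continuous. Consequently $(\mu\star\widetilde{\mu})(\{0\})=\sum_j|a_j|^2$ and $(\nu\star\widetilde{\nu})(\{0\})=0$, so the lemma follows at once from \eqref{eq:meanrecovers} (applied to $\mu\star\widetilde\mu$, to $\mu_{\rm at}\star\widetilde{\mu_{\rm at}}$ and to $\nu\star\widetilde\nu$). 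Writing $\tau=\tau_+-\tau_-$ with $\tau_\pm\in M_+(G)$ and using linearity of $\MEANGH$, it suffices to prove \eqref{eq:meanrecovers} for $\tau\in M_+(G)$; for such $\tau$ the function $\widehat{\tau}$ is continuous positive definite on $\ft{G}$ (as recalled just after \eqref{eq:atomicbymeanvalue}), so $\MEANGH(\widehat{\tau})$ is indeed defined.

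To compute it, I would exploit amenability of $\ft{G}$: fix a F{\o}lner net $(\Phi_\alpha)$ of relatively compact subsets of $\ft{G}$ and set $k_\alpha:=\lambda_{\ft{G}}(\Phi_\alpha)^{-1}\chi_{\Phi_\alpha}\in L^1_+(\ft{G},\lambda_{\ft{G}})$. For a non-trivial character $\chi$ of $\ft{G}$, picking $\gamma_0$ with $\chi(\gamma_0)\neq1$ gives $|\chi(\gamma_0)-1|\,\bigl|\int_{\Phi_\alpha}\chi\,d\lambda_{\ft{G}}\bigr|=\bigl|\int_{\Phi_\alpha}\chi-\int_{\gamma_0+\Phi_\alpha}\chi\bigr|\le\lambda_{\ft{G}}\bigl(\Phi_\alpha\triangle(\gamma_0+\Phi_\alpha)\bigr)=o\bigl(\lambda_{\ft{G}}(\Phi_\alpha)\bigr)$, so $\int_{\ft{G}}\chi\,k_\alpha\,d\lambda_{\ft{G}}\to0$; since also $\int_{\ft{G}}k_\alpha\,d\lambda_{\ft{G}}=1$, uniform approximation by trigonometric polynomials shows $\int_{\ft{G}}h\,k_\alpha\,d\lambda_{\ft{G}}\to\MEANGH(h)$ for every almost periodic $h$. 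Put $h_\alpha(x):=\int_{\ft{G}}\gamma(x)\,k_\alpha(\gamma)\,d\lambda_{\ft{G}}(\gamma)$, the inverse Fourier transform of the bounded measure $k_\alpha\,\lambda_{\ft{G}}$; then $h_\alpha\in\CCC_0(G,\mathbb{C})$, $\|h_\alpha\|_\infty\le1$, $h_\alpha(0)=1$, and, as $\gamma\mapsto\gamma(z)$ is a non-trivial character of $\ft{G}$ whenever $z\neq0$ (Pontryagin duality), the F{\o}lner estimate above yields $h_\alpha(z)\to0$ for every fixed $z\neq0$. By Fubini's theorem (legitimate, since $k_\alpha\in L^1$ and $\tau$ is finite),
\[
\int_{\ft{G}}\widehat{\tau}(\gamma)\,k_\alpha(\gamma)\,d\lambda_{\ft{G}}(\gamma)=\int_G\Bigl(\int_{\ft{G}}\overline{\gamma(x)}\,k_\alpha(\gamma)\,d\lambda_{\ft{G}}(\gamma)\Bigr)\,d\tau(x)=\int_G h_\alpha(-x)\,d\tau(x).
\]

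The proof is then finished by letting $\alpha$ run: the left-hand side tends to $\MEANGH(\widehat{\tau})$ by the preceding paragraph, and one wants the right-hand side to converge to $\int_G\mathbf{1}_{\{x=0\}}\,d\tau(x)=\tau(\{0\})$. For the atomic part $\tau_{\rm at}=\sum_j b_j\delta_{y_j}$ this is immediate, because $\int_G h_\alpha(-x)\,d\tau_{\rm at}(x)=\sum_j b_j\,h_\alpha(-y_j)$ with $\sum_j|b_j|<\infty$ and $|h_\alpha|\le1$, so a dominated convergence over the countable index set goes through even along a net. The substantive point — exactly the content of Eberlein's theorem, and where I expect the real work to lie — is the contribution of the continuous part: one must show $\int_G h_\alpha(-x)\,d\tau_{\rm c}(x)\to0$, i.e. that the F{\o}lner averages of the Fourier--Stieltjes transform of a continuous measure vanish in the limit. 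Dominated convergence along an arbitrary net is \emph{not} available here (for a net, $h_\alpha\to\mathbf{1}_{\{0\}}$ pointwise with $|h_\alpha|\le1$ does not force $\int h_\alpha\,d\tau_{\rm c}\to0$), so the clean route is to work with a F{\o}lner \emph{sequence}, which exists whenever $\ft{G}$ is $\sigma$-compact (on $\TT=\ft{\ZZ}$ these are the normalized Dirichlet kernels, recovering Wiener's classical argument): then $h_\alpha(-x)\to\mathbf{1}_{\{0\}}(x)$ pointwise with $|h_\alpha|\le1$ really do give $\int_G h_\alpha(-x)\,d\tau_{\rm c}(x)\to\tau_{\rm c}(\{0\})=0$ by the ordinary dominated convergence theorem, and the general case reduces to the $\sigma$-compact one via an open $\sigma$-compact subgroup of $\ft{G}$. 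One must also confirm that the extension of $\MEANGH$ from almost periodic to continuous positive definite functions on $\ft{G}$ is precisely the functional computed by these averages — most economically by \emph{defining} it that way, the limit existing by the computation above and translation invariance, positivity, normalization and agreement with the a.p.\ mean being readily checked. All of this is carried out by Eberlein \cite[Theorem 15.2]{Eberlein1}, \cite[Theorem 1]{Eberlein2}, which one may alternatively invoke directly.
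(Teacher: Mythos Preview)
The paper does not prove this lemma at all: it is stated as a result of Eberlein and attributed to \cite[Theorem 15.2]{Eberlein1} and \cite[Theorem 1]{Eberlein2}, with no argument supplied beyond the citation. Your proposal therefore goes well beyond what the paper does.

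Your sketch is the right one and is essentially Eberlein's own argument (the Wiener--Bochner device of writing $|\widehat{\mu}|^2=\widehat{\mu\star\widetilde{\mu}}$ and reducing to the single identity $\MEANGH(\widehat{\tau})=\tau(\{0\})$, then computing the mean via F{\o}lner averages). The reductions you make are correct: the convolution of any bounded measure with a continuous bounded measure is continuous, so the cross terms and $\nu\star\widetilde{\nu}$ have no atoms; and the atom at $0$ of $\mu_{\rm at}\star\widetilde{\mu_{\rm at}}$ is exactly $\sum_j|a_j|^2$. You are also right to flag the two genuine technical issues --- (a) dominated convergence along a net is not available, so one must pass to a F{\o}lner \emph{sequence}, which requires $\sigma$-compactness of $\ft{G}$ and hence a reduction step in general, and (b) the very meaning of $\MEANGH$ on a (non almost periodic) continuous positive definite function has to be pinned down, most cleanly by \emph{defining} it through these averages and checking consistency. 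The paper glosses over (b) entirely (it simply asserts that ``$\MEANGH$ is certainly extended'' to continuous positive definite functions) and delegates (a) to the cited references. So your write-up is more honest about where the work lies; if you want a self-contained proof you would need to actually carry out the $\sigma$-compact reduction (e.g.\ via an open $\sigma$-compact subgroup of $\ft{G}$), but citing Eberlein, as the paper does, is also perfectly acceptable.
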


From here \eqref{eq:atomicbymeanvalue} follows immediately using the "orthogonality relations" (that is, $\MEAN({\bf 1})=1$ and $\MEAN(\chi)=0$ for any character not identically one) and standard manipulations with Parseval formula.
We will now discuss how Lemma \ref{l:meanvalueParseval} entails Lemma \ref{l:atomicap}, more precisely, its nontrivial part stating that $\mu$ is atomic provided its Fourier transform is almost periodic.

\begin{proof}[Proof of Lemma~\ref{l:atomicap}]
As a special case of Lemma \ref{l:meanvalueParseval}, we already know that the mean square value of the Fourier transform of a bounded Radon measure vanishes if and only if the measure is continuous. Consider an arbitrary measure $\mu$ with almost periodic Fourier transform $\ft{\mu}$. Decomposing as in the Lemma, the Fourier transform $\ft{\mu_{\rm at}}$ of the atomic component $\mu_{\rm at}$ is of course almost periodic, whence so is the difference $\widehat{\mu}-\widehat{\mu_{\rm at}}=\ft{\nu}$ -- the Fourier transform of the continuous part of $\mu$ --, too. Recall that if the mean square value \emph{of an almost periodic function} is zero, then the function itself is identically zero (see e.g. \cite{Eberlein1} or \cite[(18.8) Theorem (i), (ii)]{HewittRossI}). Thus, Lemma~\ref{l:meanvalueParseval} entails that in this case $\nu$ is identically zero. Hence $\mu=\mu_{\rm at}$ is atomic.
\end{proof}

In the proof of Lemma~\ref{l:atomicpartpostype}, we also need the following useful observation.

\begin{lemma}\label{l:posappartpos} If $0\le \phi \in \CCC_b(G,\mathbb{R})$ and $\phi=\phi_{\rm ap} + \phi_{\rm s}$ is a decomposition of $\phi$ to an almost periodic part $\phi_{\rm ap}$ and a complementing small part $\phi_{\rm s}$ satisfying $\MEAN(|\phi_{\rm s}|^2)=0$, then we necessarily have $\phi_{\rm ap}\ge 0$.
\end{lemma}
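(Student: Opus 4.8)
The plan is to work with the negative part $\phi_{\rm ap}^{-}:=\max(-\phi_{\rm ap},0)$ of $\phi_{\rm ap}$ and to show that it has vanishing mean square, whence it vanishes identically because it is almost periodic; that gives $\phi_{\rm ap}\ge 0$ directly, with no need for a contradiction argument.

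First I would observe that we may assume $\phi_{\rm ap}$ and $\phi_{\rm s}$ to be real valued (passing to real parts if necessary, which preserves $\phi=\phi_{\rm ap}+\phi_{\rm s}$, the almost periodicity of $\phi_{\rm ap}$ and the vanishing of $\MEAN(|\phi_{\rm s}|^2)$), and that $\phi_{\rm s}=\phi-\phi_{\rm ap}$ is bounded since $\phi_{\rm ap}$, being almost periodic, is. Put $\psi:=\phi_{\rm ap}^{-}=\tfrac12(|\phi_{\rm ap}|-\phi_{\rm ap})$. Since $t\mapsto\max(-t,0)$ is $1$-Lipschitz on $\RR$ and translation commutes with pointwise composition, any $\ve$-net of translates of $\phi_{\rm ap}$ is again an $\ve$-net of translates of $\psi$; hence $\psi$ is almost periodic, and therefore so is $\psi^2$ (the almost periodic functions form an algebra). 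In particular $\MEAN(\psi^2)$ is defined and nonnegative.

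Next I would establish the pointwise estimate $|\phi_{\rm s}|^2\ge\psi^2$ on all of $G$. At a point $x$ with $\phi_{\rm ap}(x)\ge 0$ this is clear because $\psi(x)=0$; at a point $x$ with $\phi_{\rm ap}(x)<0$ the hypothesis $\phi\ge 0$ forces $\phi_{\rm s}(x)=\phi(x)-\phi_{\rm ap}(x)\ge-\phi_{\rm ap}(x)=\psi(x)>0$, so $|\phi_{\rm s}(x)|\ge\psi(x)$ and the squared inequality follows. Thus $|\phi_{\rm s}|^2-\psi^2\ge 0$ pointwise, and since this function lies in a vector space on which $\MEAN$ acts as a positive linear functional (one containing $AP(G)$ together with $|\phi_{\rm s}|^2$ — in the applications $\phi_{\rm s}$ is a Fourier--Stieltjes transform, so $|\phi_{\rm s}|^2$ is a continuous positive definite function, the case of $\MEAN$ already used above), we obtain
\[
0=\MEAN(|\phi_{\rm s}|^2)\ \ge\ \MEAN(\psi^2)\ \ge\ 0 .
\]
Hence $\MEAN(\psi^2)=0$, and since $\psi$ is almost periodic with vanishing mean square, $\psi\equiv 0$ (the fact recalled in the proof of Lemma~\ref{l:atomicap}), i.e.\ $\phi_{\rm ap}\ge 0$ on $G$, as claimed.

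The only genuinely delicate step is the second displayed inequality: one must make sure that the mean value functional employed in this section is a \emph{monotone} (positive linear) functional defined on a common space containing both the almost periodic function $\psi^2$ and $|\phi_{\rm s}|^2$, so that domination is preserved when $\MEAN$ is applied. Everything else — stability of $AP(G)$ under $|\cdot|$, products and composition with Lipschitz maps, and boundedness of $\phi_{\rm s}$ — is routine.
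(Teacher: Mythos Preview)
Your argument is correct and is essentially the same as the paper's own proof. Both proceed by showing that the negative part $\psi=(\phi_{\rm ap})^{-}$ is almost periodic (via the $1$-Lipschitz property of $t\mapsto\max(-t,0)$), establishing the pointwise domination $\psi\le|\phi_{\rm s}|$, and then concluding $\MEAN(\psi^2)=0$ so that $\psi\equiv 0$; the paper phrases the pointwise step as the elementary inequality $|(a-b)_{-}|\le b_{+}$ for $a\ge 0$, which is your case analysis in compressed form, and your explicit remark about the need for $\MEAN$ to be a positive functional on a common domain containing both $\psi^2$ and $|\phi_{\rm s}|^2$ is a point the paper leaves implicit.
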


Note that in the conditions of the Lemma we do not assume that $\phi \gg 0$ (which need not be true) but we assume the very existence of the above decomposition and nonnegativity of $\phi$.

\begin{proof} For any $w\in \RR$ define $w_{+}:=\max(w,0)$ and $w_{-}:=\min(w,0)$. Now if $a\ge 0$ and $b\in \RR$, then $|(a-b)_{-}|\le b_{+}$ holds true. Using this, we can write
$$
0\le \MEAN(|(\phi_{\rm ap})_{-}|^2) = \MEAN(|(\phi-\phi_s)_{-}|^2) \le \MEAN(|(\phi_s)_{+}|^2) \le \MEAN(|\phi_s|^2)=0,
$$
whence the function $(\phi_{\rm ap})_{-}$ has zero mean square, that is, the nonnegative function $|(\phi_{\rm ap})_{-}|^2$ has zero mean.

Note that once $f$ is an almost periodic function on $G$, then so is its negative part $f_{-}$. Indeed, we have for any two translates and any point $x\in G$ the inequality
$$
|\mathcal{T}_g f_{-}(x)-\mathcal{T}_hf_{-}(x)|\le |\mathcal{T}_gf(x)-\mathcal{T}_hf(x)|,
$$
because for any two real numbers $u, v \in \RR$ we have $|u_{-}-v_{-}|\le |u-v|$. This furnishes that  even the corresponding uniform norms satisfy
$$
\|\mathcal{T}_gf_{-}-\mathcal{T}_hf_{-}\|_{\infty}\le \|\mathcal{T}_gf-\mathcal{T}_hf\|_{\infty}.
$$
Therefore, if the translates of $f$ by a set $\{g_j\,:~ j=1,\dots,n\}$ constitute an $\ve$-net of all the translates of $f$, we necessarily have that the translates of $f_{-}$ by the same set $\{g_j\,:~ j=1,\dots,n\}$ form an $\ve$-net for the set of all the translates of $f_{-}$. Thus existence of a finite subset of translates constituting an $\ve$-net for any given $\ve>0$ for $f$ entails the same property even for the translates of  $f_{-}$ . This is equivalent to almost periodicity.

Continuity of $f$ is trivially inherited by taking $f_{-}$. So we get that $(\phi_{\rm ap})_{-}$ is a continuous almost periodic function, whence so is $|(\phi_{\rm ap})_{-}|^2$. We found that $|(\phi_{\rm ap})_{-}|^2$ is a continuous nonnegative almost periodic function on $G$ having zero mean value. As it was noted above, this implies that it is identically zero, in other words, $\phi_{\rm ap} = (\phi_{\rm ap})_{+}\ge 0$.

\end{proof}

\begin{proof}[Proof of Lemma \ref{l:atomicpartpostype}] We prove only the "real sense" version, for we need only this part in the above proof of Theorem \ref{th:atomic}. However, note that the "complex version" follows from this part.

Recall that according to Corollary~\ref{C:realsense} the measure $\tau$ is of positive type in the real sense if and only if $\Re \ft{\tau} \ge 0$. Let us decompose $\tau=\tat+\tau_{\rm c}$ as the sum of its atomic and continuous parts. We want to prove that $\tat $ is also a measure of positive type in the real sense. To do so, let us consider the respective Fourier transforms $\ft{\tau}=\ft{\tat} + \widehat{\tau_{\rm c}}$. These Fourier transforms are in $\CCC_b(\ft{G},\mathbb{C})$. Furthermore, $\ft{\tat}(\gamma)=\sum_{j=1}^{\infty} \tau(\{x_j \}) \overline{\gamma}(x_j) $ is almost periodic and $\MEAN(|\ft{\tau_{\rm c}}|^2)=0$, in view of Lemma \ref{l:meanvalueParseval}. As the sum of two almost periodic functions is almost periodic, so is the real part $\Re \ft{\tat} = (\ft{\tat} + \overline{\ft{\tat}})/2$ of $\ft{\tat}$. Clearly, we also have $\MEAN(|\Re\ft{\tau_{\rm c}}|^2)\le \MEAN(|\ft{\tau_{\rm c}}|^2)=0$.

We finally prove that the real part of the Fourier transform $\ft{\tat}$ is nonnegative which entails the assertion, by  Corollary~\ref{C:realsense}. What we know by construction is that $\tau$ is of positive type in the real sense, and thus $\Re \ft{\tau} \ge 0$. Hence an application of Lemma \ref{l:posappartpos} to $0 \leq \Re \widehat{\tau} = \Re \widehat{\tau_{\rm at}} + \Re \widehat{\tau_{\rm c}} $ furnishes $\Re \ft{\tat} \ge 0$.
\end{proof}

\section{The case of absolutely continuous measures}\label{sec:ac}

In this section we present another application of our main theorem.
As it was remarked in the introduction, Logan \cite{Log} found an upper estimate for $C(T)$. More precisely, he obtained
\begin{equation}\label{eq:Logan}
C\le C(T) = \frac{1}{2}\frac{([2T]+1)([2T]+2)}{[2T]+1-T}
\end{equation}
in the setting of Dirichl\'et polynomials $P(t)=\sum_k a_k e^{i\lambda_kt}$ with finite sums and different real exponents $\lambda_k$.
In \cite{BAMS} we reproved the upper estimate of Logan\footnote{In this regard we are indebted to G. Hal\'asz for calling the attention to a reference overlooked in \cite{BAMS}.}, in the setting of positive definite nonnegative functions.

While the proof of Logan relied on ad-hoc ideas, our approach to the solution was more direct. Namely, we observed that if $h$ is a positive definite, say continuous  function satisfying $$h\leq h_C:= C\chi_{[-1,1]}-\chi_{[a-T,a+T]}-\chi_{[-a-T,-a+T]},$$ then for every continuous $f \ggg 0$, we have
\[
0\leq \int_{-\infty}^{+\infty} f \cdot  h ~ dx \leq \int_{-\infty}^{+\infty} f \cdot h_C ~ dx = C \int_{-1}^{1} f dx - \int_{a-T}^{a+T} f dx - \int_{-a-T}^{-a+T} f dx
\]
meaning that the constant $C/2$ is admissible for the extremal problem $S([-1,1],[a-T,a+T])$.
At the end of the paper \cite{BAMS}, we conjectured that our approach is in principle optimal in the following sense.

\begin{conjecture}\label{dualconj}
The constant
$$\frac{1}{2} \inf \{C ~:~ \exists  h ~ \mbox{continuous positive definite} ~ \mbox{such that} ~ h \leq C\chi_{[-1,1]}-\chi_{[a-T,a+T]}-\chi_{[-a-T,-a+T]} \}$$
is the best among the admissible ones.
\end{conjecture}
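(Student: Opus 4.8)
The plan is to recognize Conjecture~\ref{dualconj} as a special case of the duality contained in Theorem~\ref{th:summary}, combined with a regularization step provided by Theorem~\ref{prop:AllS}, and to carry the abstract result on $\RR$ down to the concrete comparison of indicator functions. Specifically, set $G=\RR$, take
\[
\dmu := \chi_{[-1,1]}\,dx, \qquad \dnu := \chi_{[a-T,a+T]}\,dx + \chi_{[-a-T,-a+T]}\,dx,
\]
so that $\mu,\nu\in M(\RR)$. The extremal problem $S([-1,1],[a-T,a+T])$ is, up to the factor $1/2$ (coming from the symmetrization of $\nu$, which is harmless because admissible $f$ are even by (p4)), exactly the infimum of constants $C$ for which $\int f\,d\nu\le C\int f\,d\mu$ holds for all $0\le f\in\D$. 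By Theorem~\ref{th:summary}, the set of admissible $C$ equals $A(\mu,\nu)=\{C:C\mu-\nu\in\PPP^{+}+\DD^{+}\}$, and by Proposition~\ref{p:Amn}(i) this is a closed interval, so the best constant is $\min A(\mu,\nu)$.

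The heart of the argument is then to translate ``$C\mu-\nu\in M_+(\RR)+\MM+\OOO$'' into ``there exists a continuous positive definite $h$ with $h\le C\chi_{[-1,1]}-\chi_{[a-T,a+T]}-\chi_{[-a-T,-a+T]}$''. First I would observe that on $\RR$ one may discard the odd component: if $C\mu-\nu=\sigma+\tau+o$, then since both $\mu$ and the symmetrized $\nu$ are even measures, $C\mu-\nu$ is even, so $o={\bf 0}$ and $C\mu-\nu=\sigma+\tau$ with $\sigma\in M_+(\RR)$, $\tau\in\MM$. Next, $C\mu-\nu$ is absolutely continuous with density $h_C:=C\chi_{[-1,1]}-\chi_{[a-T,a+T]}-\chi_{[-a-T,-a+T]}$ outside a set of measure zero; I would want to argue that the measure of positive type $\tau=C\mu-\nu-\sigma$ is itself absolutely continuous with a \emph{continuous} positive definite density $h$, and that $\sigma\ge 0$ forces $h\le h_C$ a.e., hence (by continuity of $h$ and a density/approximation argument) $h\le h_C$ pointwise where $h_C$ is lower semicontinuous, which suffices for the chain of inequalities in the body. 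Conversely, given such an $h$, the measure $(h_C-h)\,dx$ is nonnegative and $h\,dx$ is of positive type, exhibiting the required decomposition of $C\mu-\nu$.

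The main obstacle I anticipate is precisely this regularity claim: a priori $\tau\in\MM$ is only a measure of positive type, and its Fourier–Stieltjes transform need only be a nonnegative bounded continuous function (Proposition~\ref{P:comBochner}), not integrable, so $\tau$ need not have a density at all, let alone a continuous positive definite one. The way around this is the equivalence of all the extremal constants established in Theorem~\ref{prop:AllS}: since $S_c([-1,1],[a-T,a+T])=S^\star([-1,1],[a-T,a+T])=S([-1,1],[a-T,a+T])$, one is free to test only against \emph{compactly supported} doubly positive $f\in\DC$. For such $f$ the dual object is forced into a better class — this is exactly the content of the absolutely continuous counterpart of Theorem~\ref{th:atomic} announced at the start of \S\ref{sec:ac}, which (via the Gelfand--Raikov variant) produces an absolutely continuous, indeed continuous positive definite, representative $h$ of the positive-type part. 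I would therefore structure the proof as: (1) reduce via Theorem~\ref{prop:AllS} to the $\DC$ formulation; (2) apply Theorem~\ref{th:summary} to get the decomposition $C\mu-\nu=\sigma+\tau$ with $\sigma\ge 0$, $\tau\in\MM$; (3) invoke the absolutely continuous decomposition theorem of \S\ref{sec:ac} to replace $\tau$ by a continuous positive definite density $h$; (4) read off $h\le h_C$ from $\sigma\ge 0$ and conclude that $\min A(\mu,\nu)=\tfrac12\inf\{C:\exists h\gg0\text{ continuous},\ h\le h_C\}$, which is the assertion of the conjecture.
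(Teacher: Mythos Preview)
Your overall strategy matches the paper's --- apply Theorem~\ref{th:summary} to get $C\mu-\nu=\sigma+\tau+o$, then use the Gelfand--Raikov theorem to upgrade $\tau$ to a continuous positive definite density --- but the mechanism you propose for the regularity step does not work. Restricting the test functions to $\DC$ via Theorem~\ref{prop:AllS} does \emph{not} force the dual object into a better class: Theorem~\ref{th:summary} already shows that conditions (i)--(iii) are equivalent and yield the \emph{same} dual cone $(\PPP\cap\DD)^{+}$, so nothing is gained on the measure side by shrinking the class of $f$. In the paper, Theorem~\ref{prop:AllS} is invoked only for the easy inequality $S\le\sigma$ (to make $\int fg$ finite for $f\in\DC$), not for regularity.

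The step you are missing is the direct verification of the Gelfand--Raikov hypothesis \eqref{eq:GRcondition} for $\tau$. For any $0\le u\in\CCC_c(G,\RR)$ one has $u\star\widetilde u\star\omega(0)\ge 0$ (since $\omega\ge 0$ and $u\star\widetilde u\ge 0$) and $u\star\widetilde u\star o(0)=0$ (since $u\star\widetilde u$ is even and $o$ is odd), whence
\[
u\star\widetilde u\star\tau(0)\ \le\ u\star\widetilde u\star\rho(0)\ =\ \iint u(-y)u(-z)h_C(y-z)\,dy\,dz\ \le\ \|h_C\|_\infty\Bigl(\int u\Bigr)^{2}.
\]
Theorem~\ref{th:GelfandRaikov} then gives that $\tau$ is absolutely continuous with density $\phi\in\D$, and $h_C-\phi$ is the density of $\omega+o$. (Incidentally, your claim ``$o=\mathbf 0$ because $C\mu-\nu$ is even'' is not literally correct, since the decomposition is non-unique; the paper instead symmetrizes $\omega+o$ after the fact, replacing it by its even part $(\omega+\omega^*)/2\ge 0$.) This yields $h_C=\psi+\phi$ with $\psi\ge 0$, which is exactly the representation defining $\sigma(U,V)$.
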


In this section we make further specialization of Theorem~\ref{th:summary} when the occurring measures are both absolutely continuous with respect to a reference Haar measure. Then we are able to prove the above duality conjecture in the more general setting of LCA groups.
In \cite{BAMS} the following extremal quantities were introduced on the real line (see also \cite{GT2}).
Let $U, V\in \BB$ be arbitrary. Then we set
\[
\sigma(U,V):=\frac12 \inf C(U,V)
\]
where
\[
C(U,V):=\{C>0~:~ {\rm for}~ h_C:=C\chi_U-\chi_V-\chi_{-V} | ~\exists g\le h_C ~{\rm{such~that}}~ g\in \D\}.
\]
Making use of the reformulation, suggested to us by \cite{Virosz}, this can be written equivalently as
$$ \sigma(U,V):=\inf \{C~:~ h_C \in \D+\PP_\infty \}. $$ Further it is also introduced
$$ \overline{\s}(U,V):=\sup_{g\in G} \s(U,g+V). $$
Also, whenever $\lambda U$ is defined (so for $\lambda\in \NN$ for all $G$, and for general $\la>0$ at least for $\TT^d, \RR^d$)
$$
\gamma(U,\lambda):=\s(U,\lambda U).
$$
It is easy to see that $S(U,V)\le \s(U,V)$, whence also $\overline{S}(U,V) \le \overline{\s}(U,V)$ where $$\overline{S}(U,V):=\sup_{g \in G} S(U,g+V)$$ is the correspondingly derived Shapiro-type extremal quantity.
In the last result of our paper, we prove that in fact these extremal quantities are equal.

\begin{theorem}\label{th:CK}
Let $U\in \BB$ be any symmetric neighborhood of $0$, and take an arbitrary $V\in \BB$.
We have $\s(U,V)=S(U,V)$, whence also $\overline{\s}(U,V)=\overline{S}(U,V)$. Furthermore, for any $U, V$ as above, there exists a $\s(U,V)$-extremal "weight function" $g\in \mathcal{D}$ with $g\leq h_{2 S(U,V)}$.

\end{theorem}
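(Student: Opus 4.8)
The statement is an $L^1$-duality, so the plan is to prove the two inequalities $S(U,V)\le\s(U,V)$ and $\s(U,V)\le S(U,V)$ separately, the second one also delivering the claimed extremal weight. Write $h_C:=C\chi_U-\chi_V-\chi_{-V}$. For the easy (``weak duality'') direction $S(U,V)\le\s(U,V)$ I would simply run the Shapiro-type argument already indicated before Conjecture~\ref{dualconj}. If $C$ is admissible, pick $g\in\D$ with $g\le h_C$ a.e. For any $f\in\DC\cap\PP$ the finite measure $f\,d\la$ is of positive type: using the Bochner representation $f=\check\rho$, $\rho\in M_+(\ft G)$, and the Plancherel identity \eqref{eq:parseval}, $\int_G (u\star\widetilde u)f\,d\la=\int_{\ft G}|\ft u|^2\,d\rho\ge0$ for all $u\in\CCC_c(G,\CC)$; hence by Proposition~\ref{P:newBochner} we get $\int_G fg\,d\la\ge0$, and since $f\ge0$, $h_C-g\ge0$ a.e.\ and $f$ is even,
$$
0\le\int_G fg\,d\la\le\int_G f\,h_C\,d\la=C\int_U f\,d\la-\int_V f\,d\la-\int_{-V}f\,d\la=C\int_U f\,d\la-2\int_V f\,d\la .
$$
By Lemma~\ref{l:nonzero} (and $\D\subseteq\DL$) one has $\int_U f\,d\la>0$ unless $f\equiv0$ locally a.e., so $\int_V f\,d\la\le(C/2)\int_U f\,d\la$; taking the supremum over $f$ and the infimum over admissible $C$ gives $S_c(U,V)\le\s(U,V)$, whence $S(U,V)=S_c(U,V)\le\s(U,V)$ by Theorem~\ref{prop:AllS}.

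For the hard direction $\s(U,V)\le S(U,V)$ together with attainment, I would put $C_0:=2S(U,V)$ and study the finite real measure $\m:=h_{C_0}\,d\la$, which is absolutely continuous and even (both $U$ and $V\cup(-V)$ are symmetric). The definition of $S(U,V)$ as a supremum gives, exactly as above, $\int_G f\,d\m=C_0\int_U f\,d\la-2\int_V f\,d\la\ge0$ for every $f\in\D\cap\PP$, i.e.\ $\m\in(\PPP\cap\DD)^+$. At this point I would invoke the absolutely continuous analogue of Theorem~\ref{th:atomic} established earlier in this section: since $\m$ is absolutely continuous it admits a decomposition $\m=\omega+\rho+o$ with $\omega\in M_+(G)$, $\rho\in\MM$, $o\in\OOO$ \emph{all absolutely continuous}. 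Passing to even parts (this preserves absolute continuity, keeps $\omega\ge0$, leaves the already even $\rho$ fixed, and annihilates $o$) one may assume $\m=\omega+\rho$ with $\omega=a\,d\la$, $a\ge0$, and $\rho=b\,d\la\in\MM$. Consequently $b=h_{C_0}-a\le h_{C_0}\le C_0$ $\la$-a.e.; note this one-sided bound is the only place where the concrete form of $h_{C_0}$ is used.

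The crux — and the step I expect to be the main obstacle — is to upgrade this bounded-above $L^1$ density $b$ to a genuine continuous positive definite function. Since $\rho\in\MM$, Proposition~\ref{P:comBochner} gives $\ft b=\ft\rho\ge0$, with $\ft b\in\CCC_b(\ft G)$. I would fix a positive definite approximate identity $(\psi_\alpha)\subseteq\DC$ with $\psi_\alpha\ge0$ even, $\int_G\psi_\alpha\,d\la=1$, $\ft{\psi_\alpha}=|\ft{u_\alpha}|^2\in L^1_+(\ft G)$, $0\le\ft{\psi_\alpha}\le1$ and $\ft{\psi_\alpha}\uparrow1$ pointwise on $\ft G$ (such a net exists, e.g.\ tensoring Gaussian kernels on the $\RR^m$-factor with Fejér-type kernels on the remaining compact-by-discrete factor of $G$; when $G$ is first countable a sequence suffices and plain Fatou applies). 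Then $\widehat{b\star\psi_\alpha}=\ft b\,\ft{\psi_\alpha}\in L^1_+(\ft G)$, so the continuous function $b\star\psi_\alpha$ (cf.\ (R1)) equals its Fourier inversion, and evaluating at $0$,
$$
\int_{\ft G}\ft b\,\ft{\psi_\alpha}\,d\la_{\ft G}=(b\star\psi_\alpha)(0)=\int_G b\,\psi_\alpha\,d\la\le C_0\int_G\psi_\alpha\,d\la=C_0
$$
(using $b\le C_0$ a.e., $\psi_\alpha\ge0$ even). Monotone convergence then yields $\int_{\ft G}\ft b\,d\la_{\ft G}\le C_0<\infty$, so $\ft b\in L^1_+(\ft G)$; hence $g:=\check{\ft b}\in\CCC_0(G)$ is continuous, real valued and positive definite — being the inverse Fourier transform of the nonnegative finite measure $\ft b\,d\la_{\ft G}$ — so $g\in\D$, and $g=b$ $\la$-a.e.\ by Fourier inversion, whence $g\le h_{C_0}$ a.e. Thus $C_0=2S(U,V)$ is admissible, which proves $\s(U,V)\le S(U,V)$ and simultaneously exhibits the required $\s(U,V)$-extremal weight $g\in\D$ with $g\le h_{2S(U,V)}$ (a.e., which is the sense entering the extremal problem). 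Combining the two directions gives $\s(U,V)=S(U,V)$, and applying this with the translate $g+V\in\BB$ in place of $V$ and taking the supremum over $g\in G$ yields $\overline\s(U,V)=\overline S(U,V)$. Everything apart from the absolutely continuous refinement of the dual-cone description (which keeps $\rho$ absolutely continuous, hence with an honest density $b$ and $\ft b\ge0$) and the Fatou estimate above — where it is precisely the bound $b\le C_0$ that rescues integrability of $\ft b$ — is routine bookkeeping.
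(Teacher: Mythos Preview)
Your easy direction $S\le\s$ is correct and essentially matches the paper's.

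For the hard direction you invoke ``the absolutely continuous analogue of Theorem~\ref{th:atomic} established earlier in this section'' to obtain a decomposition $\m=\omega+\rho+o$ with all three pieces absolutely continuous. No such result is proved in the paper prior to Theorem~\ref{th:CK}; obtaining an absolutely continuous $\rho\in\MM$ is precisely the crux. A naive version --- ``if $\tau\in\MM$ then $\tau_{\rm ac}\in\MM$'' --- is actually false: on $\TT$ take $\tau=2\delta_0+(2\cos x-1)\,d\lambda$, which has $\widehat\tau\ge1$ but $\widehat{\tau_{\rm ac}}(0)=-1$. So as written there is a genuine gap.

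The good news is that your Fatou argument, slightly reorganized, fills it and yields a genuine alternative to the paper's proof. Start from the \emph{general} decomposition $h_{C_0}\,d\lambda=\omega+\tau+o$ furnished by Theorem~\ref{th:dualcone}, pass to even parts to kill $o$ (and symmetrize $\omega$), and then run your estimate directly on the \emph{measure} $\tau$ rather than on a presumed density $b$: for your even $\psi_\alpha\in\D_c$ with $\psi_\alpha\ge0$, $\int\psi_\alpha=1$, formula~\eqref{eq:parseval} gives
\[
\int_{\widehat G}\widehat\tau\,\widehat{\psi_\alpha}\,d\lambda_{\widehat G}
=\int_G\psi_\alpha\,d\tau
=\int_G\psi_\alpha\,h_{C_0}\,d\lambda-\int_G\psi_\alpha\,d\omega
\le C_0,
\]
and since $\widehat\tau\ge0$ (Proposition~\ref{P:comBochner}) and $\widehat{\psi_\alpha}\to1$ pointwise, Fatou (or, for non-first-countable $G$, a compact exhaustion of $\widehat G$) yields $\widehat\tau\in L^1_+(\widehat G)$. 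Fourier inversion then forces $d\tau=g\,d\lambda$ with $g=\check{\widehat\tau}\in\D\cap\CCC_0(G)$, and $\omega=(h_{C_0}-g)\,d\lambda\ge0$ gives $g\le h_{C_0}$ a.e.

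For comparison, the paper does not use a Fatou estimate at all: from the same general decomposition and the bound $\|h_{C_0}\|_\infty\le C_0+2$ it shows $u\star\widetilde u\star\tau(0)\le(C_0+2)\bigl(\int u\bigr)^2$ for all $0\le u\in\CCC_c(G)$ (the $\omega$- and $o$-contributions being nonnegative resp.\ zero), and then invokes the Gelfand--Raikov theorem (Theorem~\ref{th:GelfandRaikov}) as a black box to conclude directly that $\tau$ has a continuous positive definite density. Your corrected route is thus a concrete Fourier-side substitute for Gelfand--Raikov, and even gives the sharper constant $g(0)=\|g\|_\infty\le C_0$ in place of $C_0+2$.
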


We remark that an analogous statement remains valid for the dilated version $\gamma(U,\lambda)$. Similarly, one can analyze the extremal quantities $Q(U,k)$, too. These coincide with $\gamma(U,\lambda)$ for convex $U$ in $\RR^d$ or $\TT^d$ and for $\la=k\in \NN$. The details are left to the reader.

Before the proof of Theorem \ref{th:CK} we also recall the celebrated Gelfand-Raikov theorem, cited here from \cite[Theoreme 3]{God} in its strongest form.

\begin{theorem}[Gelfand-Raikov]\label{th:GelfandRaikov}
Let $\mu$ be a Radon measure of positive type on $G$. Assume that there exists a neighborhood $U$ of $0$, and a real number $K\geq 0$, such that for all weight functions $0\le u \in \CCC_c(G)$ and $\supp u \Subset U$, it holds
\begin{equation}\label{eq:GRcondition}
u\star \widetilde{u} \star \mu(0)\leq K \left(\int_{G} u d\lambda \right)^2.
\end{equation}
Then $\mu$ is absolutely continuous with a continuous positive definite Radon-Nikodym derivative $\Phi$ satisfying $\|\Phi\|_\infty \le K$.
\end{theorem}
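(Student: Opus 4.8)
The plan is to realize $\mu$ as a vague limit of genuine \emph{continuous} positive definite functions obtained by regularizing $\mu$ with an approximate identity built from the very weights $u$ occurring in \eqref{eq:GRcondition}, to control these regularizations uniformly by $K$ through \eqref{eq:GRcondition}, and then to recover a limiting density on the dual side via the Bochner-Weil-Povzner-Raikov theorem together with weak-$*$ compactness.

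First I would set up the regularization. For a nonnegative $u\in\CCC_c(G,\mathbb{R})$ with $\supp u\Subset U$ and $\int_G u\,d\la=1$, put $e:=u\star\widetilde u$ and $\Phi_u:=\mu\star e$. Since $e\in\CCC_c(G,\mathbb{R})$ and $\mu$ is a Radon measure, $\Phi_u$ is continuous, because $x\mapsto e(x-\cdot)$ varies continuously in the supremum norm with supports in a fixed compact set on which $\mu$ is finite. To see that $\Phi_u$ is positive definite I would verify integral positive definiteness and then invoke \cite[Proposition~4]{God}: for any $w\in\CCC_c(G,\mathbb{R})$, writing $v:=w\star u\in\CCC_c(G,\mathbb{R})$ and using $\widetilde{w\star u}=\widetilde w\star\widetilde u$ with commutativity of convolution, one gets
$$
\bigl(w\star\widetilde w\star\Phi_u\bigr)(0)=\bigl(w\star\widetilde w\star u\star\widetilde u\star\mu\bigr)(0)=\bigl(v\star\widetilde v\star\mu\bigr)(0)=\int_G (v\star\widetilde v)\,d\mu\ge 0,
$$
the last inequality being exactly the positive type of $\mu$. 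Hence $\Phi_u\gg 0$, and by (p1) its norm is attained at $0$, so that \eqref{eq:GRcondition} furnishes the crucial uniform bound
$$
\|\Phi_u\|_\infty=\Phi_u(0)=(u\star\widetilde u\star\mu)(0)\le K\Bigl(\int_G u\,d\la\Bigr)^2=K.
$$

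Next I would pass to the dual group. By the Bochner-Weil-Povzner-Raikov theorem each $\Phi_u=\check\nu_u$ with $\nu_u\in M_+(\ft G)$ and $\nu_u(\ft G)=\Phi_u(0)\le K$. Choosing a net of admissible weights $u_\alpha$ with $\supp u_\alpha$ shrinking to $\{0\}$, the measures $\nu_{u_\alpha}$ all lie in the ball of radius $K$ of $M(\ft G)=(\CCC_0(\ft G,\mathbb{R}))'$, so by Banach-Alaoglu a subnet converges weak-$*$ to some $\nu\in M_+(\ft G)$ with $\nu(\ft G)\le K$. Setting $\Phi:=\check\nu$, this is a continuous positive definite function with $\|\Phi\|_\infty=\Phi(0)=\nu(\ft G)\le K$, the asserted density.

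The main work, and the principal obstacle, is to identify $\mu=\Phi\la$, which requires reconciling two different limiting procedures. On the group side $e_\alpha:=u_\alpha\star\widetilde{u_\alpha}$ is a \emph{nonnegative} approximate identity, its mass being $(\int_G u_\alpha\,d\la)^2=1$ while $\|e_\alpha\|_1\le\|u_\alpha\|_1^2=1$, which forces $e_\alpha\ge 0$; hence for every $\psi\in\CCC_c(G,\mathbb{R})$ the regularizations converge vaguely to $\mu$, namely
$$
\int_G \psi\,\Phi_{u_\alpha}\,d\la=\int_G(\psi\star\check e_\alpha)\,d\mu\longrightarrow\int_G\psi\,d\mu ,
$$
since $\psi\star\check e_\alpha\to\psi$ uniformly with supports in a fixed compact set. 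On the dual side, by Fubini's theorem (a form of the Parseval relation \eqref{eq:parseval}) and writing $\Psi(\gamma):=\int_G\psi\,\gamma\,d\la$, which lies in $\CCC_0(\ft G)$ by the Riemann-Lebesgue lemma, one has $\int_G\psi\,\Phi_{u_\alpha}\,d\la=\int_{\ft G}\Psi\,d\nu_{u_\alpha}$, and the weak-$*$ convergence $\nu_{u_\alpha}\to\nu$, legitimately applicable precisely because $\Psi\in\CCC_0(\ft G)$, gives $\int_{\ft G}\Psi\,d\nu_{u_\alpha}\to\int_{\ft G}\Psi\,d\nu=\int_G\psi\,\Phi\,d\la$. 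Comparing the two limits yields $\int_G\psi\,d\mu=\int_G\psi\,\Phi\,d\la$ for all $\psi\in\CCC_c(G,\mathbb{R})$, whence $\mu=\Phi\la$ by the Riesz representation theorem. The delicate point throughout is the interplay between the vague topology on $G$ and the weak-$*$ topology on $\ft G$; the observation that the relevant test function $\Psi$ is genuinely in $\CCC_0(\ft G)$ is exactly what makes the two compactness arguments compatible.
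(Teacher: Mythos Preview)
Your proof is correct, but the paper takes a quite different tack: it does not prove the theorem itself, instead citing the main conclusion (absolute continuity of $\mu$ with a continuous positive definite density $\Phi$) from Godement \cite[Th\'eor\`eme~3]{God}, and then only supplies the supplementary bound $\|\Phi\|_\infty\le K$, which is not explicit in the cited source. That short argument runs as follows: once $d\mu=\Phi\,d\lambda$ with $\Phi\in\D$ is granted, pick by continuity a neighborhood $V$ of $0$ on which $\Phi\ge\Phi(0)-\varepsilon$, and take $u=\chi_W\star\widetilde{\chi_W}$ with $W$ so small that $\supp u\Subset U$ while $u\star\widetilde u$ is supported in $V$; then \eqref{eq:GRcondition} becomes $K(\int u)^2\ge \int(u\star\widetilde u)\,\Phi\,d\lambda\ge(\Phi(0)-\varepsilon)(\int u)^2$, whence $\Phi(0)\le K$.

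Your route, by contrast, is a genuine self-contained proof: the regularizations $\Phi_u=\mu\star(u\star\widetilde u)$ are continuous positive definite with $\Phi_u(0)\le K$ by the hypothesis, their Bochner transforms $\nu_u$ all lie in the $K$-ball of $M_+(\ft G)$, and a weak-$*$ limit point $\nu$ yields $\Phi=\check\nu$; matching the vague limit on $G$ against the weak-$*$ limit on $\ft G$ through Parseval and Riemann--Lebesgue then identifies $\mu=\Phi\,\lambda$. This buys independence from the external reference and delivers $\|\Phi\|_\infty=\Phi(0)=\nu(\ft G)\le K$ for free (via weak-$*$ lower semicontinuity of the total mass), rather than as an afterthought. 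One cosmetic remark: your deduction that $\|e_\alpha\|_1\le1=\int e_\alpha$ forces $e_\alpha\ge0$ is valid but roundabout; nonnegativity is immediate from $u_\alpha\ge0$ and $e_\alpha(x)=\int u_\alpha(x+y)\,u_\alpha(y)\,d\lambda(y)$.
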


For full precision we note that the uniform norm estimate is not contained in the cited version but it follows from the continuity of $\Phi$ and $\|\Phi\|_\infty=\Phi(0)$, which is property (p2) of positive definite functions from $\D$.

Indeed, for any $\ve>0$ there exist a neighborhood $V$ of $0$ such that $\Phi|_V \ge C:=\Phi(0)-\ve$. Then for any open $W \in \BB$ such that $\overline{W-W}-\overline{W-W} \subseteq U\cap V$, and for the continuous nonnegative function $u:=\chi_W\star \widetilde{\chi_{W}}$ with symmetric support $S:=\supp u \Subset \overline{W-W}$ with 
$S-S \subseteq U\cap V$ we get that
\[
\begin{gathered}
K  \left(\int_{G} u d\lambda \right)^2 \ge u\star \widetilde{u} \star \mu(0)
= \int_G \int_G u(-y-z)\widetilde{u}(z) \Phi(y) d\lambda(y) d\la(z) \\
 = \int_{-S} \int_{-S-S}  u(-y-z)\widetilde{u}(z) \Phi(y) d\lambda(y) d\la(z)
\\
\ge \int_{-S} \int_{-S-S} u(-y-z){u}(-z) C d\lambda(y) d\la(z)  =  C \left(\int_G u\right)^2
\end{gathered}
\]
proving $C\le K$, that is, $\Phi(0)-\ve \le K$ which is equivalent to the assertion.

\begin{proof}[Proof of Theorem \ref{th:CK}] We start with the proof of part a). Clearly, it suffices to prove the equality $S(U,V)=\s(U,V)$.

First let us check the part $S(U,V)\le \s(U,V)$. If $C \in C(U,V)$ is an admissible constant such that $h_C$ stays above a continuous positive definite function $g$, then we necessarily have for any $0\le f\in \D_c$ that
$$
0\le \int f g \le  \int f g + \int f (h_C-g) = \int f h_C = C \int_U f - \int_V f - \int_{-V} f.
$$
Using evenness of $f\in \DC$, which is provided by property (p4), $$\frac{\int_V f}{\int_U f}  \le \frac{C}{2}$$ obtains. First taking supremum over $0\le f\in \D_c$ and then taking infimum over all such constants $C$ we thus infer $S_c(U,V)\le \s(U,V)$. However, we have proved as part of Theorem \ref{prop:AllS} that $S_c(U,V)=S(U,V)$, whence we even have $S(U,V)\le \s(U,V)$.

Second, we turn to the converse inequality $S(U,V) \geq \sigma(U,V)$. So now let us take an admissible constant $c$ from the definition of $S(U,V)$, that is, a constant $c$ with the property 
$$
\int_V f \le c \int_U f \quad (\forall 0 \le f\in \D).
$$ 
In other words, $c$ is subject to the condition $\int_G f [c\mu-\nu_1] \ge 0 \, (0 \le f\in \D)$ with the absolutely continuous measures $\mu:=\la|_U =\chi_U d\lambda, \quad \nu_1:=\la|_V =\chi_V d\lambda$. Then the set of admissible values $c$ for the above is exactly $A(\mu,\nu_1)$, and according to part (i) of Proposition~\ref{p:Amn}, this set is closed, i.e. $c:=S(U,V)$ itself is an admissible constant, too. Therefore, we can work right with this value $c=S(U,V)$ from here on.

Taking into consideration that all $f\in \D$ is even, we can as well consider the symmetric measures: $\mu:=\la|_U =\chi_U d\lambda$ (for $U$ was symmetric), and $\nu_2:=\widetilde{\nu_1}:=\la|_{-V} =\chi_{-V} d\lambda$. Adding, we even find with $C:=2c=2S(U,V)$ the inequality $\int_G f [C\mu-\nu] \ge 0 \, (0 \le f\in \D)$ with $\mu:=\la|_U =\chi_U d\lambda, \quad \nu:=\nu_1+\nu_2=\la|_V + \la|_{-V} =(\chi_V + \chi_{-V} )d\lambda$, too.

So the general result in Theorem~\ref{th:summary} tells us that
this holds for all $0\le f\in \D$ if and only if
$$
\rho:=C\mu-\nu \in M_{+}(G)+\MM+\OOO.
$$
Therefore, the measure $\rho$ has the representation $\rho=\om+\tau+o$ where $\om \in M_{+}(G)$, $\tau \in \MM$ is real and of positive type, and $o\in \OOO$ is real and odd.
We have that $\rho$ is absolutely continuous with Radon-Nikodym derivative $h_C \in L^\infty_c(G,\mathbb{R})$.
Moreover, $K:=\|h_C\|_\infty\le C+2$ also holds. Clearly, if $0\le u \in \CCC_c(G,\mathbb{R})$ is any weight function, then we have
$$
u\star\widetilde{u}\star \rho (0)=\int_G\int_G u(-y)u(-z) h_C(y-z) dy dz \le K \left(\int_G u\right)^2.
$$
For the nonnegative measure $\omega$ and the nonnegative weight function $u$ we obviously have $u \star \widetilde{u}\star\omega (0) \ge 0$. For the odd measure $o$, however, we have $u \star \widetilde{u}\star o (0) = 0$, because the convolution square $u\star\widetilde{u}$ is positive definite and real, whence even. The above yields
$$
u \star \widetilde{u}\star\tau (0) \le u \star \widetilde{u}\star\rho (0) \le K \left(\int_G u\right)^2.
$$
Here, $\tau\in\MM$ is a measure of positive type, whence the Gelfand-Raikov Theorem applies. This furnishes that even $\tau$ is absolutely continuous with Radon-Nikodym derivative $\phi \in \D$ and $\|\phi\|_\infty \le K$. As a result, $\om+o=\rho-\tau$ is absolutely continuous, too.

Observe further that both $\rho:=h_Cd\lambda$ and $\tau \in \MM$ was even, whence so must be the difference $\rho-\tau$. Now the representation $\omega+o$ is not necessarily unique, so we cannot state just for any representation in this form that $o={\bf 0}$, but at this point we can rewrite this side as $\omega+o =  [(\omega+o)+(\omega+o)^*]/2 + [(\omega+o)-(\omega+o)^*]/2$, where the first, even part is $\omega_1:= [(\omega+o)+(\omega+o)^*]/2= [\omega+\omega^*]/2 +  [o+o^*]/2 =  [\omega+\omega^*]/2$ because $o$ was odd. At this point, however, we can indeed observe that $\omega_1$, together with $\omega$, is nonnegative, while it is also even. The other, odd component is $o_1:= [(\omega+o)-(\omega+o)^*]/2 = [\omega-\omega^*]/2 + o$.

Recalling that $\omega+o=\omega_1+o_1$ equals to the even measure $\rho-\tau$, it follows that $o_1=\rho-\tau-\omega_1$ is both even and odd, whence is null, and we finally find that $\omega_1=\rho-\tau$ with both sides being even, and as a result of the absolute continuity of the right hand side, the same holds for $\omega_1$, too. So its Radon-Nikodym derivative $\psi$ is even and it satisfies
$$
0\le \psi \in L^\infty(G,\mathbb{R}) \cap L^1(G,\mathbb{R}).
$$
Thus, we find that
\begin{align} \label{eq:HC}
h_C=\psi + \phi \quad \mbox{with some} \quad \psi \ge 0,~ \phi \in \D
\end{align}
which is exactly the same representation that appears in the definition of the quantity $\s(U,V)$. It means that $c:=S(U,V)$, $C:=2c$ was found to be an admissible value for the extremal problem $\s(U,V)$, too. Whence $S(U,V) \ge \s(U,V)$. Since the converse inequality has been already shown, the proof of $S(U,V) = \s(U,V)$ is complete. Furthermore, \eqref{eq:HC} shows the existence of the extremal weight function $g:=\phi$ as well.
\end{proof}


\begin{thebibliography}{20}

\bibitem{Berdysheva} {\sc E. Berdysheva, Sz.Gy. R\'ev\'esz}, Delsarte's Extremal Problem and Packing on Locally Compact Abelian Groups, arXiv:1904.03614.

\bibitem{boas}{\sc R.P. Boas, Jr.}, \emph{Entire functions}, Academic Press, New York, 1954.

\bibitem{BR} {\sc A. Bonami, Sz.Gy. R\'ev\'esz}, Failure of Wiener's property for positive definite periodic functions.  \emph{C. R. Math. Acad. Sci. Paris}, {\bf 346} (2008), no. 1-2, 39--44.


\bibitem{Eberlein1}{\sc W.F. Eberlein}, Abstract ergodic theorems and weak almost periodic functions, \emph{Trans. Amer. Math. Soc.} {\bf 67} (1949), 217--240.

\bibitem{Eberlein2}{\sc W.F. Eberlein}, A note on Fourier-Stieltjes transforms, \emph{Proc. Amer. Math. Soc.} {\bf 6} (1955), 310--312.

\bibitem{Edwards} {\sc R.E. Edwards}, \emph{Functional Analysis: Theory and Applications}, Dover Publications Inc., New York, 1965.

\bibitem{BAMS} {\sc A. Efimov, M. Ga\'al, Sz.Gy. R\'ev\'esz}, On integral estimates of nonnegative positive definite functions, {\it Bull. Aust. Math. Soc.} {\bf 96} (2017) no. 1, 117--125.

\bibitem{EF}{\sc P. Erd\H{o}s, W.H.J. Fuchs}, On a problem of additive number theory, {\it J. London Math. Soc.} {\bf 31} (1956), 67--73.


\bibitem{God} {\sc R. Godement}, Les Fonctions De Type Positif et la Theorie Des Groupes, \emph{Trans. Amer. Math. Soc.} {\bf 63}, No. 1 (Jan., 1948), 1--84.

\bibitem{GT} {\sc D.V. Gorbachev, S.Y. Tikhonov}, Wiener's problem for positive definite functions,  \emph{Math. Z. } {\bf 289} (2018), No. 3--4, 859--874.

\bibitem{GT2} {\sc D.V. Gorbachev, S.Y. Tikhonov}, Doubling condition at the origin for
non-negative positive definite functions, \emph{Proc. Amer. Math. Soc.} {\bf 147} (2019), 609--618.

\bibitem{Halasz} {\sc G. Hal\'asz}, \"Uber die Mittelwerte multiplikativer zahlentheoretischer Funktionen, \emph{Acta Math. Hung.} {\bf 19} No. 3--4 (1968), 365--403.

\bibitem{Halmos} {\sc P.R. Halmos}, \emph{Finite dimensional vector spaces}, 2nd edition, Van Nostrand Inc., Princeton, Toronto, London, New York, 1958.

\bibitem{HewittRossI} {\sc E. Hewitt, K.A. Ross}, \emph{Abstract harmonic analysis}, {\bf I}, Die Grundlehren der mathemtischen Wissenchaften in Einzeldarstellungen, Band {\bf 115}, Springer Verlag, Berlin, G\"ottingen, Heidelberg, 1963.

\bibitem{HewittRossII} {\sc E. Hewitt, K.A. Ross}, \emph{Abstract harmonic analysis}, {\bf II}, Die Grundlehren der mathemtischen Wissenchaften, Band {\bf 152}, Springer Verlag, Berlin, Heidelberg, New York, Budapest, 1970.

\bibitem{GeomAnal}{\sc R.B. Holmes}, \emph{Geometric Functional Analysis and its Applications}, Springer, Berlin, 1975.

\bibitem{JMR} {\sc P. Jamming, M. Matolcsi, Sz.Gy. R\'evesz} On the extremal rays of the cone of positive,
positive definite functions, {\it J. Fourier Anal. Appl.}, {\bf 15} (2009), No. 4, 561--582.

\bibitem{jeya-wolk} {\sc V. Jeyakumar, H. Wolkowicz}, Generalizations of Slater's constraint qualification for infinite convex programs, {\it Math. Prog.}, {\bf 57} (1992), 85--101.


\bibitem{Log} {\sc B.F. Logan}, An interference problem for exponentials \emph{Michigan Math. J.}, {\bf 35} (1988), 369--393.

\bibitem{Mathias} {\sc M. Mathias}, \"Uber positive Fourier-Integrale. {\em Math. Z.} {\bf 16} (1923), 103--125.

\bibitem{ML} {\sc H.L. Montgomery}, Ten lectures on the interface between harmonic analysis and analytic number theory, CBMS Regional Conference Series in Mathematics, 84. American Mathematical Society, Providence, RI, 1994. xiv+220 pp.

\bibitem{Po} {\sc A. Povzner}, \"Uber positive Funktionen auf einer Abelschen Gruppe, \emph{Dokl. Adad. Nauk SSSR} {\bf 28} (1940) 294--295.

\bibitem{Ra} {\sc D.A. Raikov}, Positive definite functions on commutative groups with an invariant measure, \emph{Dokl. Adad. Nauk SSSR} {\bf 28} (1940) 296--300.

\bibitem{rudin:groups} {\sc W. Rudin}, {\em Fourier analysis on groups}, Interscience Tracts in Pure and Applied Mathematics, No. {\bf 12} Interscience Publishers (a division of John Wiley and Sons), New York-London 1962 ix+285 pp.

\bibitem{Sh}{\sc H.S. Shapiro}, Majorant problems for Fourier coefficients, \emph{Quart. J. Math. Oxford} Ser. (2) {\bf 26} (1975),  9--18.

\bibitem{Toeplitz} {\sc O. Toeplitz}, \"Uber die Fourier'sche Entwickelung positiver Funktionen, {\em Rend. Circ. Mat. Palermo}, {\bf 32} (1911), 191--192.

\bibitem{Virosz}{\sc D. Virosztek}, A short proof of a duality theorem and another application of an intersection formula on dual cones, \emph{Bull. Aust. Math. Soc.} Volume {\bf 97}, Issue 1 (2018), pp. 94--101

\bibitem{Wa}{\sc S. Wainger}, A problem of Wiener and the failure of a principle for Fourier series with positive coefficients, \emph{Proc. Amer. Math. Soc.} {\bf 20} (1969) 16--18.

\bibitem{We}{\sc A. Weil}, L'Int\'egration dans le Groupes Topologiques et ses Applications, Actualit\'es Sci. Indust., no. 869, Hermann, Paris, 1940.

\bibitem{WW}{\sc N. Wiener, A. Wintner}, On a local $L^2$-variant of Ikehara's theorem, \emph{Rev. Mat. Cuyana} {\bf 2} (1956), 53--59. See also in the \emph{Norbert Wiener: Collected Works}, {\bf II}, MIT Press, Cambridge MA, 1979.


\end{thebibliography}
\end{document}